\def\PBG{(0,\infty)\times[0,\infty)}
\def\mapright#1{\smash{\mathop{\longrightarrow}\limits\sp{#1}}}
\newtheorem{theorem}{Theorem}[section]
\newtheorem{lemma}[theorem]{Lemma}
\newtheorem{remark}[theorem]{Remark}
\newtheorem{definition}[theorem]{Definition}
\begin{document}
\title[Moduli spaces]{Homogeneous affine surfaces: Moduli spaces}
\author{M. Brozos-V\'{a}zquez,\, E. Garc\'{i}a-R\'{i}o,  and P. Gilkey}
\address{MBV: Departamento de Matem\'aticas, Escola Polit\'ecnica Superior, Universidade da Coru\~na, Ferrol\; 15402, Spain}
\email{ miguel.brozos.vazquez@udc.gal}
\address{EGR: Faculty of Mathematics,
University of San\-tia\-go de Compostela,
Santiago de Compostela\; 15782, Spain}
\email{eduardo.garcia.rio@usc.es}
\address{PG: Mathematics Department, \; University of Oregon, \;\;
 Eugene \; OR 97403, \; USA}
\email{gilkey@uoregon.edu}
\keywords{Ricci tensor, moduli space, homogeneous affine surface}
\subjclass[2010]{53C21}
\begin{abstract}
We analyze the moduli space of non-flat homogeneous affine connections on surfaces. 
For Type~$\mathcal{A}$ surfaces, we write down complete sets of invariants that determine the
local isomorphism type depending on the rank of the Ricci tensor and examine the structure of the associated moduli space. 
For Type~$\mathcal{B}$ surfaces which are not Type~$\mathcal{A}$
we show the corresponding moduli space is a simply connected real analytic 4-dimensional manifold with
second Betti number equal to $1$.


\end{abstract}
\maketitle
\section{Introduction}

Moduli spaces, representing the space of solutions of geometric classifications,
are central in understanding the corresponding set of objects.
If the property under consideration is invariant by certain transformations,
then the associated moduli space should reflect the action of the
corresponding group of transformations. Moduli spaces have many
applications both in mathematics and physics and are closely linked
to the construction of invariants. 

A manifold ${M}$  equipped with a given structure (affine connection,
Riemannian metric, K\"ahler structure, etc.) is \textit{locally homogeneous}
if, given any pair of points of $M$, there is a germ of a diffeomorphism
(isomorphism, isometry, complex isometry, etc.) taking one point
into another and preserving the given structure. 

In this paper we shall study the moduli spaces of homogeneous affine
surfaces. We shall identify two such surfaces if they are affine equivalent
for some (orientation preserving) affine transformation.
Hence moduli spaces of homogeneous affine surfaces may be interpreted as 
providing a universal space of parameters to the problem of finding all
homogeneous affine connections that may carry some natural geometric
and topological structures.
The classification of non-flat locally homogeneous affine surfaces was obtained
by Opozda \cite{Op04} (see also \cite{AMK08, Du, G-SG, KVOp2} for related
work). They fall into three non-disjoint families: {\it Type}~$\mathcal{A}$,
{\it Type}~$\mathcal{B}$ and {\it Type}~$\mathcal{C}$ that we shall discuss
in more detail in Theorem~\ref{T1.1}. 
The Type~$\mathcal{C}$ surfaces correspond to
locally  homogeneous affine surfaces where the connection in question is
 the Levi-Civita connection of a metric with constant Gauss curvature. There
 are a finite number of such examples depending on the sign of the Gauss
 curvature and the signature of the underlying manifold. Thus
 we shall
restrict our attention to the study of the moduli spaces of Type~$\mathcal{A}$ and Type~$\mathcal{B}$ surfaces.

A slightly weaker definition of homegeneity was considered in \cite{DG}: a manifold is called \textit{quasihomogeneous} if it is locally homogeneous on a nontrivial open set, but not on the whole surface. There, a classification of torsion-free real-analytic quasihomogeneous affine connections on compact orientable surfaces was given. 

We adopt the following notational conventions.
An affine surface is a pair $\mathcal{M}=(M,\nabla)$ where $\nabla$
is a torsion free connection on the tangent bundle of a $2$-dimensional
manifold $M$. If $x=(x^1,x^2)$ is a system of local
coordinates on $M$, we adopt the {\it Einstein convention} and sum
over repeated indices to define the {\it Christoffel symbols} $\Gamma=\Gamma_{ij}{}^k$ by expanding:
$$\nabla_{\partial_{x^i}}\partial_{x^j}=\Gamma_{ij}{}^k\partial_{x^k}\,;$$
the condition that $\nabla$ is torsion free is equivalent to
the symmetry $\Gamma_{ij}{}^k=\Gamma_{ji}{}^k$ of the Christoffel symbols.
Let $R(X,Y):=\nabla_X\nabla_Y-\nabla_Y\nabla_X-\nabla_{[X,Y]}$
be the curvature operator and let $\rho(X,Y):=\operatorname{Tr}\{Z\rightarrow R(Z,X)Y\}$ be the Ricci tensor.

Affine connections on surfaces have been used to construct new examples of pseudo-Riemannian metrics exhibiting properties without Riemannian counterpart \cite{CGV10, CGGV09, De, KoSe}.
Flat connections play a distinguished role in many problems and the corresponding moduli spaces, together with their geometric structure, have been broadly investigated in the literature.

Recall that $\mathcal{M}$ is \emph{locally homogeneous} if given any 
pair of points of $M$, there is a germ of a diffeomorphism taking one point into another and preserving
$\nabla$. Suppose $\nabla$ is a connection on 
$M=\mathbb{R}^2$ so that the Christoffel symbols are
constant. The translation group $(x^1,x^2)\rightarrow(x^1+b^1,x^2+b^2)$ acts transitively on
$\mathbb{R}^2$ and preserves $\nabla$, so this geometry is homogeneous. Similarly, if
$\nabla$ is a connection on $M=\mathbb{R}^+\times\mathbb{R}$ so that the Christoffel symbols
take the form $\Gamma_{ij}{}^k=(x^1)^{-1}C_{ij}{}^k$ for $C_{ij}{}^k$ constant, then the ``$ax+b$"
group of transformations $(x^1,x^2)\rightarrow(ax^1,ax^2+b)$ for $a>0$ and $b\in\mathbb{R}$
acts transitively on $M$ and preserves $\nabla$
so this geometry is homogeneous as well. Finally, the Levi-Civita connection of a simply
connected complete Riemann surface of constant non-zero sectional curvature is homogeneous.
The following classification result of Opozda \cite{Op04} shows that these are the
only possible geometric models for a locally homogeneous affine surface:

\begin{theorem}\label{T1.1}
Let $\mathcal{M}=(M,\nabla)$ be a locally homogeneous affine surface which is not flat. Then at least one of the following
three possibilities holds which describe the local geometry:
\begin{itemize}
\item[$\mathcal{A}$)] There exists a coordinate atlas so the Christoffel symbols
$\Gamma_{ij}{}^k$ are constant.
\item[$\mathcal{B}$)] There exists a coordinate atlas so the Christoffel symbols have the form
$\Gamma_{ij}{}^k=(x^1)^{-1}C_{ij}{}^k$ for $C_{ij}{}^k$ constant and $x^1>0$.
\item[$\mathcal{C}$)] $\nabla$ is the Levi-Civita connection of a metric of constant sectional curvature.
\end{itemize}\end{theorem}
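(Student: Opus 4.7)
The plan is to reduce the classification to the structure of the Lie algebra $\mathfrak{g}$ of germs at $p\in M$ of vector fields $X$ satisfying $\mathcal{L}_X\nabla=0$ (affine Killing fields). Local homogeneity makes the evaluation map $\mathrm{ev}_p:\mathfrak{g}\to T_pM$ surjective, so $\dim\mathfrak{g}\geq 2$. A standard prolongation argument (an affine Killing field is determined by its $1$-jet at a point) bounds $\dim\mathfrak{g}\leq 6$ and ensures $\mathfrak{g}$ is a genuine finite-dimensional Lie algebra. I split the argument on $\dim\mathfrak{g}$.

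Assume first $\dim\mathfrak{g}=2$. Up to isomorphism there are exactly two options: the abelian algebra, or the unique non-abelian $2$-dimensional Lie algebra, spanned by $X,Y$ with $[X,Y]=Y$. In the abelian case, pick germs $X_1,X_2\in\mathfrak{g}$ linearly independent at $p$; the simultaneous flow-box theorem yields coordinates with $X_i=\partial_{x^i}$, and the Killing equation reduces to $\partial_{x^i}\Gamma_{jk}{}^\ell=0$. Thus the Christoffel symbols are constant, giving Type~$\mathcal{A}$. In the non-abelian case, straighten $Y=\partial_{x^2}$; from $[X,Y]=\pm Y$ and the linear independence of $X,Y$ at $p$, a further coordinate change preserving $\partial_{x^2}$ normalizes $X$ to $x^1\partial_{x^1}+x^2\partial_{x^2}$. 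The Killing equation for $Y$ then gives $\partial_{x^2}\Gamma_{ij}{}^k=0$, while invariance under the dilation flow of $X$ forces $\Gamma_{ij}{}^k$ to be homogeneous of degree $-1$; combining these, $\Gamma_{ij}{}^k(x)=(x^1)^{-1}C_{ij}{}^k$ for constants $C_{ij}{}^k$, which is Type~$\mathcal{B}$ on $\{x^1>0\}$.

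If $\dim\mathfrak{g}\geq 3$, the isotropy $\mathfrak{h}=\ker(\mathrm{ev}_p)$ is nontrivial, and any nonzero $Z\in\mathfrak{h}$ acts linearly and nontrivially on $T_pM\cong\mathbb{R}^2$ while preserving the Ricci tensor $\rho$ and the full curvature operator. A case analysis on the Jordan form of this isotropy generator in $\mathfrak{gl}(2,\mathbb{R})$, together with the fact that the curvature tensor on a surface is algebraically determined by $\rho$ and any parallel volume element, shows that $\nabla$ admits a nondegenerate parallel symmetric $(0,2)$-tensor $g$. Hence $\nabla$ is the Levi-Civita connection of $g$, and the transitive action of $\mathfrak{g}$ forces $g$ to have constant Gauss curvature; the non-flatness hypothesis makes this curvature nonzero, placing us in Type~$\mathcal{C}$.

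The main obstacle is this final case: extracting a parallel nondegenerate symmetric $(0,2)$-tensor from the existence of a nontrivial linear isotropy. All earlier steps are essentially routine applications of the flow-box theorem and of the Lie derivative identity for connections. By contrast, ruling out isotropies that preserve no metric requires careful use of the low dimensionality, reducing the problem to a finite list of Jordan forms and then identifying the resulting parallel tensor from the curvature data, and it is here that the specific two-dimensional geometry (rather than general Lie-theoretic bookkeeping) does the real work.
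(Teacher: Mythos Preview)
First, note that the paper does not prove Theorem~1.1 at all: it is Opozda's classification, cited from \cite{Op04}, and the paper simply takes it as input. So there is no in-paper proof to compare against.

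Your handling of the case $\dim\mathfrak{g}=2$ is correct and is indeed the standard route to Types~$\mathcal{A}$ and~$\mathcal{B}$. The genuine gap is in the case $\dim\mathfrak{g}\ge 3$: the assertion that nontrivial isotropy forces $\nabla$ to be the Levi-Civita connection of a constant-curvature metric is false, and the paper itself supplies counterexamples. By the result quoted just after Theorem~1.7, a non-flat Type~$\mathcal{B}$ surface $\mathcal{M}_C$ has $2\le\kappa(\mathcal{M}_C)\le 4$, with $\kappa=4$ precisely when $\mathcal{M}_C$ is also Type~$\mathcal{A}$; since the paper also records that no surface is both Type~$\mathcal{A}$ and Type~$\mathcal{C}$, every surface in the (nonempty) overlap of Types~$\mathcal{A}$ and~$\mathcal{B}$ has a $4$-dimensional Killing algebra, hence $2$-dimensional isotropy at each point, yet is \emph{not} Type~$\mathcal{C}$. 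Likewise the moduli space $\mathfrak{Z}_{23\mathcal{B}}$ of Type~$\mathcal{B}$ surfaces with $\kappa\in\{2,3\}$ is $4$-dimensional (Theorem~1.10), whereas Type~$\mathcal{C}$ comprises only finitely many local models; so the generic Type~$\mathcal{B}$ surface with $\kappa=3$ has $1$-dimensional isotropy and is again not Type~$\mathcal{C}$. In these examples no parallel nondegenerate symmetric $(0,2)$-tensor exists---the remark following Lemma~5.2 points out that metrizability is extremely rare among these geometries---so the proposed Jordan-form case analysis cannot succeed.

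The correct architecture when $\dim\mathfrak{g}\ge 3$ is not ``isotropy $\Rightarrow$ metric'' but rather: either $\mathfrak{g}$ contains a $2$-dimensional subalgebra that still evaluates onto $T_pM$, in which case your own $\dim\mathfrak{g}=2$ argument applied to that subalgebra already yields Type~$\mathcal{A}$ or Type~$\mathcal{B}$; or no such transitive $2$-dimensional subalgebra exists, which is the exceptional situation (essentially $\mathfrak{g}\supseteq\mathfrak{so}(3)$, the one low-dimensional Lie algebra with no $2$-dimensional subalgebras), and only then is one forced into Type~$\mathcal{C}$. The trichotomy is therefore not indexed by $\dim\mathfrak{g}$, and the real work in Opozda's theorem lies in showing that a transitive $2$-dimensional subalgebra exists outside the constant-curvature case.
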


We assume $\mathcal{M}$ is not flat which for a surface means that the Ricci tensor $\rho$ does not vanish.
One says that $\mathcal{M}$ is {\it Type~$\mathcal{A}$}, {\it Type~$\mathcal{B}$} or
{\it Type~$\mathcal{C}$} depending on which possibility holds in Theorem~\ref{T1.1}. These are not exclusive. 
Although there are no surfaces which are both 
Type~$\mathcal{A}$ and Type~$\mathcal{C}$, there are surfaces which are both Type~$\mathcal{A}$
and Type~$\mathcal{B}$ and there are surfaces which are both Type~$\mathcal{B}$ and Type~$\mathcal{C}$.
The Ricci tensor is symmetric for Type~$\mathcal{A}$ and Type~$\mathcal{C}$ surfaces
but need not be symmetric for Type~$\mathcal{B}$ surfaces in general.
Let $\mathfrak{Z}_{\mathcal{A}}$ (resp. $\mathfrak{Z}_{\mathcal{B}}$) be the moduli space
of  isomorphism classes of germs of affine surfaces of Type~$\mathcal{A}$ (resp. Type~$\mathcal{B}$).

In previous work,  Kowalski and Vlasek \cite{KV03} showed
that the moduli space of Type $\mathcal{A}$ connections is at most 
two-dimensional and that the moduli space of Type $\mathcal{B}$ connections
is at most four dimensional. Our purpose is to understand these
moduli spaces in more
detail. We shall investigate their underlying structure and construct some new
invariants which are not of Weyl type. For Type~$\mathcal{A}$ surfaces, 
the invariants we shall construct completely characterize the homogeneous
affine structure up to affine equivalence. We now summarize our main results.

\subsection{Type~$\mathcal{A}$ surfaces}

Let $\mathcal{Z}_{\mathcal{A}}$ be the set of Christoffel symbols
$\Gamma\in\mathbb{R}^6$ defining a Type~$\mathcal{A}$ structure.
Thus, let
$$
\Gamma:=(\Gamma_{11}{}^1,\Gamma_{11}{}^2,
\Gamma_{12}{}^1=\Gamma_{21}{}^1,\Gamma_{12}{}^2=\Gamma_{21}{}^2,
\Gamma_{22}{}^1,\Gamma_{22}{}^2)\in\mathbb{R}^6
$$
determine a translation invariant
homogeneous affine structure on $\mathbb{R}^2$. The Ricci tensor is given by
\begin{equation}\label{E1.a}
\begin{array}{l}
\rho_{11}=\Gamma_{12}{}^2 (\Gamma_{11}{}^1-\Gamma_{12}{}^2)
+\Gamma_{11}{}^2 (\Gamma_{22}{}^2-\Gamma_{12}{}^1),\\[0.03in]
\rho_{12}=\rho_{21}=\Gamma_{12}{}^1 \Gamma_{12}{}^2
-\Gamma_{11}{}^2 \Gamma_{22}{}^1,\\[0.03in]
\rho_{22}=\Gamma_{22}{}^1 (\Gamma_{11}{}^1-\Gamma_{12}{}^2)
+\Gamma_{12}{}^1 (\Gamma_{22}{}^2-\Gamma_{12}{}^1)\,.
\end{array}\end{equation}
Thus, in particular, $\rho$ is symmetric if $\mathcal{M}$ is of Type~$\mathcal{A}$.
If $\rho=0$, then $\mathcal{M}$
 is flat. As we shall assume $\mathcal{M}$ is not flat, either $\operatorname{Rank}\{\rho\}=1$ or
 $\operatorname{Rank}\{\rho\}=2$.

\subsubsection{Type~$\mathcal{A}$ surfaces with $\operatorname{Rank}\{\rho\}=1$.}
Let $\mathcal{M}=(M,\nabla)$ be an affine surface of Type~$\mathcal{A}$ with
$\operatorname{Rank}\{\rho\}=1$. We showed previously
\cite{BGGP16} that $\mathcal{M}$ is both of Type~$\mathcal{A}$ and of Type~$\mathcal{B}$ implies
$\operatorname{Rank}\{\rho\}=1$.
Let $P$ be an arbitrary point of $M$;
 the particular point is irrelevant since $\mathcal{M}$ is locally homogeneous. Choose
 $X\in T_PM$ so that $\rho(X,X)\ne0$. Let $\nabla\rho$ be the covariant derivative of the
 Ricci tensor. Define
 \begin{eqnarray*}
&& \alpha_X(\mathcal{M}):=\nabla\rho(X,X;X)^2\cdot\rho(X,X)^{-3}\text{ and }
 \epsilon_X(\mathcal{M}):=\operatorname{sign}(\rho(X,X))=\pm1\,.
 \end{eqnarray*}
The following result \cite{BGGP16} shows that $\alpha_X$ and
$\epsilon_X$
give a complete system of invariants for such surfaces:
 \begin{theorem}\label{T1.2}
Let $\mathcal{M}$ be an affine
 surface of Type~$\mathcal{A}$ with $\operatorname{Rank}\{\rho\}=1$. Then
 $\alpha_X$ and $\epsilon_X$ are independent of $X$ and define affine invariants 
 $\alpha(\mathcal{M})$ and $\epsilon(\mathcal{M})$.
\begin{enumerate}
\item $\mathcal{M}$ is symmetric (i.e. $\nabla\rho=0$) if and only if $\alpha(\mathcal{M})=0$.
 \item If $\alpha(\mathcal{M})\ne0$, then $\mathcal{M}$ is also of Type~$\mathcal{B}$ if and only if $\alpha(M)\notin(0,16)$.
 \item If $\alpha(\mathcal{M})=0$, then $\mathcal{M}$ is also of  Type~$\mathcal{B}$ if and only if $\epsilon<0$.
 \item If $\tilde{\mathcal{M}}$ is another Type~$\mathcal{A}$ surface
 with $\operatorname{Rank}(\tilde\rho)=1$,
 with $\alpha(\tilde{\mathcal{M}})=\alpha(\mathcal{M})$, and with
 $\epsilon(\tilde{\mathcal{M}})=\epsilon(\mathcal{M})$, then $\tilde{\mathcal{M}}$ is locally isomorphic
 to $\mathcal{M}$.
 \end{enumerate}
 \end{theorem}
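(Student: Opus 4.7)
\medskip

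\noindent\emph{Proof strategy.} The plan is to exploit the fact that a Type~$\mathcal{A}$
structure on $\mathbb{R}^2$ is encoded by $\Gamma\in\mathbb{R}^6$ and that its local
isomorphism class is governed by the linear action of $GL(2,\mathbb{R})$ on
$\mathcal{Z}_{\mathcal{A}}$ (since linear changes of coordinates preserve translation
invariance). Since $\rho$ is symmetric by \eqref{E1.a} and $\operatorname{Rank}\{\rho\}=1$,
write $\rho=\epsilon\,\omega\otimes\omega$ for a nonzero covector $\omega$ and
$\epsilon=\pm 1$. After an element of $GL(2,\mathbb{R})$ we may assume $\omega=dx^1$, so
that $\rho_{11}=\epsilon$ and $\rho_{12}=\rho_{22}=0$. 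A short case analysis of the three
equations in \eqref{E1.a} using $\rho_{11}\neq 0$ then forces
$\Gamma_{12}{}^1=\Gamma_{22}{}^1=0$. The two-dimensional residual stabiliser of $dx^1$ inside
$GL(2,\mathbb{R})$ (matrices with first row $(1,0)$) can be used to put $\Gamma$ in a
canonical form depending, for each fixed $\epsilon$, on a single real parameter.

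With $\omega=dx^1$ constant in the chart, a direct computation gives
\[
\nabla_k\rho_{ij}=-\epsilon\bigl(\Gamma_{ki}{}^\ell\omega_\ell\omega_j
+\Gamma_{kj}{}^\ell\omega_\ell\omega_i\bigr),
\]
so that $\nabla\rho(X,X;X)=-2\epsilon\,\omega(X)\,\Phi(X,X)$ with
$\Phi(X,Y):=\omega_\ell\,\Gamma_{ab}{}^\ell X^aY^b$. The vanishing of $\Gamma_{12}{}^1$ and
$\Gamma_{22}{}^1$ established above yields $\Phi(X,X)=\Gamma_{11}{}^1\,\omega(X)^2$, and
hence
\[
\alpha_X=\frac{4\,\omega(X)^2\,\Phi(X,X)^2}{\epsilon\,\omega(X)^6}
=4\epsilon\,(\Gamma_{11}{}^1)^2,
\qquad \epsilon_X=\epsilon,
\]
both manifestly independent of $X$ and hence well-defined affine invariants. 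Since
$\nabla\rho=0$ in this chart is equivalent to $\Gamma_{11}{}^1=0$, assertion~(1) follows.
For~(4), two normalised Christoffel tuples with the same $(\alpha,\epsilon)$ must coincide
after a residual $GL(2,\mathbb{R})$-transformation, giving a local isomorphism.

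For parts~(2) and~(3), the issue is whether the canonical Type~$\mathcal{A}$ model admits a
(necessarily nonlinear) change of coordinates $y=\phi(x)$ putting the Christoffels into the
Type~$\mathcal{B}$ form $\tilde\Gamma_{ij}{}^k=(y^1)^{-1}\tilde C_{ij}{}^k$. The Christoffel
transformation law becomes an overdetermined second-order system for $\phi$; on the canonical
form I would try ans\"atze such as $y^1=(a_1x^1+a_2x^2)^r$ and $y^1=\exp(a_1x^1+a_2x^2)$,
paired with a linear $y^2$, and derive the algebraic compatibility conditions on
$(a_1,a_2,r)$. I expect these to reduce to a quadratic whose discriminant has roots at
$\alpha=0$ and $\alpha=16$, so that real solutions exist exactly when $\alpha\notin(0,16)$.
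The main obstacle, and most delicate step, is the analysis of the borderline cases: at
$\alpha=0$ the quadratic degenerates and existence of $\phi$ bifurcates with the sign
$\epsilon$, yielding item~(3); and at the repeated root $\alpha=16$ a separate argument is
required. Once solvability is correctly characterised, items~(2) and~(3) follow.
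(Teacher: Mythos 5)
The paper does not actually prove Theorem~\ref{T1.2}: it is quoted from \cite{BGGP16}, so there is no internal proof to compare against. Judged on its own terms, the first half of your argument is sound. Normalizing $\rho=\epsilon\,dx^1\otimes dx^1$ does force $\Gamma_{12}{}^1=\Gamma_{22}{}^1=0$ (if $\Gamma_{22}{}^1\ne0$, then $\rho_{12}=\rho_{22}=0$ in Equation~\eqref{E1.a} forces $\rho_{11}=0$, and similarly for $\Gamma_{12}{}^1$), and your computation $\alpha_X=4\epsilon(\Gamma_{11}{}^1)^2$, $\epsilon_X=\epsilon$ correctly establishes the independence of $X$ and Assertion~(1).

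The genuine gap is in Assertion~(4). Your mechanism --- ``two normalised tuples with the same $(\alpha,\epsilon)$ must coincide after a residual $GL(2,\mathbb{R})$-transformation'' --- is false. Concretely, take $\Gamma$ with only $\Gamma_{12}{}^2=\Gamma_{21}{}^2=1$ nonzero, and $\tilde\Gamma$ with only $\tilde\Gamma_{11}{}^2=-1$, $\tilde\Gamma_{22}{}^2=1$ nonzero. Both satisfy $\rho=-dx^1\otimes dx^1$ and $\nabla\rho=0$, hence $(\alpha,\epsilon)=(0,-1)$ for both. But any linear map relating two normalized tuples must send $dx^1$ to $\pm dx^1$, hence is lower triangular, and (using $\Gamma_{22}{}^1=0$) it rescales $\Gamma_{22}{}^2$ by a nonzero factor; so it cannot carry $\Gamma_{22}{}^2=0$ to $\tilde\Gamma_{22}{}^2=1$. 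Thus there are several distinct $GL(2,\mathbb{R})$-orbits sharing the same $(\alpha,\epsilon)$, and the local isomorphism asserted in (4) must be realized by a \emph{non-linear} diffeomorphism --- which is precisely why such surfaces can simultaneously be of Type~$\mathcal{B}$. Your proof supplies no such map. Relatedly, your treatment of (2) and (3) is an announced plan (``I would try ans\"atze\dots'', ``I expect these to reduce to a quadratic\dots'') rather than an argument; the threshold $\alpha=16$ is plausible (with $\alpha=4\epsilon(\Gamma_{11}{}^1)^2$ it corresponds to $\Gamma_{11}{}^1=\pm2$, $\epsilon=+1$, exactly where the normal form with $\Gamma_{22}{}^2=0$ first exists), but nothing is proved. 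To complete the argument one must enumerate the $GL(2,\mathbb{R})$-normal forms for each fixed $(\alpha,\epsilon)$ and then exhibit explicit non-linear affine equivalences (equivalently, Type~$\mathcal{B}$ realizations) identifying them; this is the content of \cite{BGGP16}.
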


\subsubsection{Type~$\mathcal{A}$ surfaces with $\operatorname{Rank}\{\rho\}=2$.}
If $\mathcal{M}$ is a surface of Type~$\mathcal{A}$ with $\operatorname{Rank}\{\rho\}=2$,
then $\mathcal{M}$ is not of Type~$\mathcal{B}$.
The structure group is the affine group
\begin{eqnarray*}
&&\mathfrak{F}:=\{T\in\operatorname{Diff}(\mathbb{R}^2):T(x^1,x^2)=(a_1^1x^1+a_2^1x^2+b^1,a_1^2x^1+a_2^2x^2+b^2)\\
&&\qquad\qquad\text{ where }
(a_i^j)\in\operatorname{GL}(2,\mathbb{R})\}\,.
\end{eqnarray*}
If $T\in\mathfrak{F}$ then $T_\ast \partial_{x^i}=\tilde a_j^i \partial_{x^j}$, where $\tilde a^i_j$ are the components of the inverse matrix so
$\sum_j a_i^j\tilde a_j^k=\delta_i^k$
is the Kronecker symbol. If $\Gamma\in\mathbb{R}^6$, let $T^*\Gamma$ be the associated Christoffel symbols
in the new coordinate system; these are constant and given in the form:
$$
(T^*\Gamma)_{ij}{}^k=
\sum_{\alpha,\beta,\gamma} \tilde a_i^\alpha \tilde a_j^\beta  a^k_\gamma\Gamma_{\alpha\beta}{}^\gamma
$$
The translations play no role and we obtain a representation of
$\operatorname{GL}(2,\mathbb{R})$ on $\mathbb{R}^6$.
Fix a Type~$\mathcal{A}$ coordinate atlas $\{(\mathcal{O}_\alpha,\phi_\alpha)\}$ on $M$. Here
$\{\mathcal{O}_\alpha\}$ forms an open cover of $M$ so the Christoffel symbols are constant
and the maps $\phi_\alpha$ are diffeomorphisms from $\mathcal{O}_\alpha$
to an open subset of $\mathbb{R}^2$. Sum over repeated indices to define:
\begin{equation}\label{E1.b}
\rho_{ij}^3:=\Gamma_{ik}{}^l\Gamma_{jl}{}^k,\quad
\psi_3:=\operatorname{Tr}_\rho\{\rho^3\}=\rho^{ij}\rho_{ij}^3,\quad
\Psi_3:=\det(\rho^3)/\det(\rho).
\end{equation}
The transition functions of a Type~$\mathcal{A}$ coordinate atlas on $\mathcal{M}$ belong to the affine group $\mathfrak{F}$, and
$\psi_3$ and $\Psi_3$ are independent of the local Type~$\mathcal{A}$
coordinates chosen and thus are affine invariants of $\mathcal{M}$ (see Lemma~\ref{L1.3}).
We can use the local homogeneity to choose the coordinate atlas so that 
${}^\alpha\Gamma=\Gamma$ is independent of the particular chart 
$\mathcal{O}_\alpha$. Let $\rho=\rho_\Gamma$; this is a fixed non-degenerate
bilinear form. The transition functions then belong to the orthogonal affine group
$\mathfrak{F}^\rho:=\{T\in\mathfrak{F}:T^*\rho=\rho\}$, i.e. 
$(a_{ij})$ is in the orthogonal group determined by $\rho$.

\begin{definition}\rm
Let $\mathcal{Z}_+$ (resp. $\mathcal{Z}_0$ or $\mathcal{Z}_-$)
 be the set of Christoffel symbols
$\Gamma\in\mathbb{R}^6$ defining a Type~$\mathcal{A}$ structure such that the Ricci tensor 
is positive definite (resp. indefinite or negative definite). Let $\mathfrak{Z}_\varepsilon$  for
$\varepsilon=+,0,-$ be the associated moduli space. 
\end{definition}
It will follow from Lemma~\ref{L1.3} below that
$\mathfrak{Z}_\varepsilon=\mathcal{Z}_\varepsilon/\operatorname{GL}(2,\mathbb{R})$. Since $\mathcal{Z}_\varepsilon$ is an open subset of $\mathbb{R}^6$ and since
$\operatorname{GL}(2,\mathbb{R})$ is a 4-dimensional Lie group, one expects
that $\mathfrak{Z}_\varepsilon$ will be 2-dimensional. Kowalski and Vlasek \cite{KV03}
have shown that this is the case.

Let $\Theta_\varepsilon:=(\psi_3,\Psi_3)$ on $\mathcal{Z}_\varepsilon$; this real analytic map
extends to a map from the moduli space $\mathfrak{Z}_\varepsilon$ to $\mathbb{R}^2$.
The following result makes this very explicit and gives
a complete set of real analytic invariants:

 \begin{theorem}\label{T1.5} $\Theta_\varepsilon$ is a 1-1 map from 
 $\mathfrak{Z}_\varepsilon$ to a closed simply connected subset of $\mathbb{R}^2$.
\end{theorem}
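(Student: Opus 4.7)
The plan is to use the $\operatorname{GL}(2,\mathbb{R})$-action to normalize $\rho$ and then recognize $(\psi_3, \Psi_3)$ as the complete set of spectral invariants of the symmetric bilinear form $\rho^3$. Since $\operatorname{GL}(2,\mathbb{R})$ acts transitively on non-degenerate symmetric bilinear forms of a fixed signature, I would first bring $\rho$ to the canonical form $\rho_0 = \operatorname{diag}(\pm 1, \pm 1)$ or $\operatorname{diag}(1,-1)$ dictated by $\varepsilon$. The three Ricci equations \eqref{E1.a} then cut out a three-dimensional slice of admissible $\Gamma$ in $\mathbb{R}^6$, with residual freedom the one-dimensional orthogonal stabilizer $O(\rho_0)$; the quotient therefore has dimension two, consistent with the Kowalski--Vlasek bound. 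Swapping the dummies $k \leftrightarrow l$ in $\rho^3_{ij} = \Gamma_{ik}{}^l \Gamma_{jl}{}^k$ shows that $\rho^3$ is symmetric in $(i,j)$, so $\rho^{-1}\rho^3$ is a well-defined endomorphism whose two eigenvalues are the roots of $\lambda^2 - \psi_3 \lambda + \Psi_3 = 0$. Thus knowing $(\psi_3, \Psi_3)$ is equivalent to knowing the $O(\rho_0)$-orbit of the pair $(\rho, \rho^3)$.

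The heart of the argument, and what I expect to be the main obstacle, is to show that the pair $(\rho, \rho^3)$ actually determines $\Gamma$ up to the residual orthogonal action. After normalizing $\rho = \rho_0$ and diagonalizing $\rho^3 = \operatorname{diag}(\lambda_1, \lambda_2)$, one obtains six polynomial equations in the six Christoffel components that should admit an essentially unique solution. I would approach this by systematic elimination, beginning with the factorized off-diagonal Ricci equation $\Gamma_{12}{}^1 \Gamma_{12}{}^2 = \Gamma_{11}{}^2 \Gamma_{22}{}^1$ and feeding it into the remaining five equations to reduce to a small subsystem whose residual discrete solutions correspond exactly to the finite reflections inside $O(\rho_0)$. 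Alternatively, one can exhibit an explicit canonical form for $\Gamma$ parametrized by $(\lambda_1, \lambda_2)$, splitting into subcases when the two eigenvalues coincide, and verify directly that $\Theta_\varepsilon$ separates the resulting families.

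For the topology of the image, I would evaluate $\Theta_\varepsilon$ on the canonical family constructed above, identify the image as an explicit semi-algebraic region of $\mathbb{R}^2$ cut out by inequalities reflecting the realizability constraints (such as positivity of a discriminant, or sign conditions arising from $\varepsilon$), and deduce closedness from properness of the canonical parametrization onto its image. Simple connectivity would then follow by showing the semi-algebraic region is star-shaped or convex, which is the expected shape for a domain parametrized by two free eigenvalues modulo their ordering symmetry. The cases $\varepsilon = \pm$ and $\varepsilon = 0$ may require separate bookkeeping, since $O(\rho_0)$ is compact in the definite case but has four components in the indefinite case, affecting both the normalization procedure and the shape of the image boundary.
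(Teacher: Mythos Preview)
Your spectral repackaging of $(\psi_3,\Psi_3)$ as the trace and determinant of the $\rho$-self-adjoint endomorphism $\rho^{-1}\rho^3$ is correct and conceptually clean, but it does not actually reduce the difficulty of the theorem; it only reformulates it.  The paper takes a completely different route.  Rather than attempting to invert the map $\Gamma\mapsto(\rho,\rho^3)$ algebraically, it first normalizes $\rho$ and then uses the residual rotation (or hyperbolic rotation) to force $\Gamma_{12}{}^1=0$, obtaining an explicit two-parameter family $\Gamma_\pm(x,y)$ (resp.\ $\Gamma_{0,1},\Gamma_{0,2}$) that surjects onto $\mathfrak{Z}_\varepsilon$.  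The hard work is then to analyse the fibres of this surjection: in the definite cases the map $(x,y)\mapsto[\Gamma_\pm(x,y)]$ is generically $3$-to-$1$, and the paper partitions the parameter domain into zones separated by the Jacobi locus, a discriminant locus, and an exceptional ray, before verifying by explicit monotonicity computations that $\Theta_\varepsilon$ separates points on a fundamental domain.  The indefinite case requires yet another parametrization split according to the sign of $xy-1$.

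The genuine gap in your plan is the step you yourself flag as the main obstacle: showing that $(\rho,\rho^3)$ determines $\Gamma$ up to $O(\rho_0)$.  Your proposed ``systematic elimination'' is precisely where all the content lies, and there is no reason to expect the resulting polynomial system to have a unique solution modulo the finite Weyl group of $O(\rho_0)$ without doing essentially the same case-by-case work the paper carries out.  Indeed the paper's $3$-to-$1$ phenomenon already shows that fixing $\rho=\pm\operatorname{Id}$ and $\Gamma_{12}{}^1=0$ does \emph{not} rigidify $\Gamma$; the cubic governing the residual rotation has three real roots on large regions.  A further concern is your use of simultaneous diagonalisation of $\rho$ and $\rho^3$: in the indefinite case $O(1,1)$ is non-compact and a $\rho$-symmetric operator need not be $\mathbb{R}$-diagonalisable, so the eigenvalue pair $(\lambda_1,\lambda_2)$ may be complex conjugates and your normal form $\rho^3=\operatorname{diag}(\lambda_1,\lambda_2)$ is unavailable.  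The paper avoids this by never attempting to diagonalise $\rho^3$ and instead working with bespoke parametrizations tailored to each signature.
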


We discuss the range of $\Theta_\varepsilon$ as follows. Consider the two curves
\begin{equation}\label{E1.c}
\begin{array}{l}
\textstyle\sigma_+(t):=(4 t^2+\frac{1}{t^2}+2,4 t^4+4 t^2+2),
\\[0.05in]
\textstyle\sigma_-(t):=(-4 t^2-\frac{1}{t^2}+2,4 t^4-4 t^2+2).
\end{array}\end{equation}
The curve $\sigma_+$ is smooth; the curve $\sigma_-$ has a cusp at $(-2,1)$ when $t=\frac1{\sqrt2}$.
These two curves divide the plane into 3 open regions $\mathfrak{O}_-$, $\mathfrak{O}_0$, and $\mathfrak{O}_+$
where $\mathfrak{O}_-$ lies in the second quadrant and is bounded on the right
by $\sigma_-$, $\mathfrak{O}_+$ lies in the first quadrant, and is bounded on the left by $\sigma_+$
and $\mathfrak{O}_0$ lies in between and is bounded on the left by $\sigma_-$ and on the right by $\sigma_+$.
Let $\mathfrak{C}_\varepsilon$ be the closure of $\mathfrak{O}_\varepsilon$; $\mathfrak{C}_-=\mathfrak{O}_-\cup\operatorname{range}(\sigma_-)$, $\mathfrak{C}_+=\mathfrak{O}_+\cup\operatorname{range}(\sigma_+)$, and
$\mathfrak{C}_0=\mathfrak{O}_0\cup\operatorname{range}(\sigma_-)\cup\operatorname{range}(\sigma_+)$.
\begin{theorem}\label{T1.6}
Adopt the notation established above. Then 
$\Theta_\varepsilon(\mathcal{Z}_\varepsilon)=\mathfrak{C}_\varepsilon$.
\end{theorem}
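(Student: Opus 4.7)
The plan is to exploit the $\operatorname{GL}(2,\mathbb{R})$-invariance to normalize the Ricci tensor, then parameterize the fiber of the normalized Ricci equations in $\Gamma$-space, and finally compute $(\psi_3,\Psi_3)$ on that parameterization. Since $\psi_3,\Psi_3$ descend to the moduli space (Lemma~\ref{L1.3}), I may replace $\rho$ by any element in its $\operatorname{GL}(2,\mathbb{R})$-orbit. Using a linear coordinate change I normalize $\rho$ to $\operatorname{diag}(1,1)$, $\operatorname{diag}(1,-1)$, or $\operatorname{diag}(-1,-1)$ according to $\varepsilon=+,0,-$. The residual symmetry group reduces from the $4$-dimensional $\operatorname{GL}(2,\mathbb{R})$ to the $1$-dimensional orthogonal group $O(\rho)$.

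Next, the three Ricci relations in (\ref{E1.a}) with this fixed normalized $\rho$ define a $3$-dimensional real algebraic variety $V_\varepsilon\subset\mathbb{R}^6$. I would solve the off-diagonal equation for one Christoffel symbol (say $\Gamma_{11}{}^2$ in terms of $\Gamma_{12}{}^1,\Gamma_{12}{}^2,\Gamma_{22}{}^1$) and then solve the two diagonal equations for $\Gamma_{11}{}^1-\Gamma_{12}{}^2$ and $\Gamma_{22}{}^2-\Gamma_{12}{}^1$, obtaining an explicit rational parameterization of $V_\varepsilon$ by three free parameters. Substituting into the definitions (\ref{E1.b}) of $\rho^3$, $\psi_3=\rho^{ij}\rho^3_{ij}$ and $\Psi_3=\det(\rho^3)/\det(\rho)$ then expresses $\Theta_\varepsilon$ as a polynomial map in these three parameters.

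The third step is to quotient this $3$-parameter image by the residual $O(\rho)$-action. In the definite case $O(\rho)=O(2)$ is compact and the quotient is a closed planar region; in the indefinite case $O(1,1)$ has two components, and one must track the sign conventions carefully. I would guess that the curves $\sigma_\pm$ arise as the image of the locus where the residual $O(\rho)$-orbit degenerates (the stabilizer jumps), and in particular the cusp of $\sigma_-$ at $t=1/\sqrt 2$ should correspond to the point with maximal symmetry. Once such a guess is made, one can produce explicit $1$-parameter families of $\Gamma$'s mapping to $\sigma_\pm(t)$ by direct substitution, and dually, any $\Gamma$ in the ``interior'' stratum can be moved to a canonical form whose image lies in the interior $\mathfrak{O}_\varepsilon$.

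The principal obstacle is showing that the polynomial map from the $3$-parameter family onto $\mathbb{R}^2$ has image exactly $\mathfrak{C}_\varepsilon$: one direction requires deriving the algebraic inequalities cut out by $\sigma_\pm$ (probably by eliminating variables and exhibiting a manifest sum of squares in the definite case and a difference of squares in the indefinite case), and the other requires surjectivity onto the interior, which amounts to solving an algebraic system with prescribed $(\psi_3,\Psi_3)$. Consistency with Theorem~\ref{T1.5} (the image is closed and simply connected) will then force $\Theta_\varepsilon(\mathcal{Z}_\varepsilon)=\mathfrak{C}_\varepsilon$ on the nose rather than merely up to boundary components.
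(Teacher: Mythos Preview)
Your outline is a strategy, not a proof: you explicitly flag ``the principal obstacle'' (proving the image is exactly $\mathfrak{C}_\varepsilon$) and then do not resolve it. The hoped-for ``manifest sum of squares'' and the appeal to Theorem~\ref{T1.5} at the end do not pin down the image --- Theorem~\ref{T1.5} only tells you the image is \emph{some} closed simply connected set, not which one.

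The paper's argument shares your first step (normalize $\rho$), but then goes further \emph{before} computing invariants: it uses the residual orthogonal group to normalize one more Christoffel symbol. In the definite cases a rotation sets $\Gamma_{12}{}^1=0$, and the Ricci equations then force explicit two-parameter families $\Gamma_\pm(x,y)$ (Definition~\ref{D2.1}, Lemma~\ref{L2.2}); in the indefinite case the hyperbolic rescaling $T_a$ normalizes $\Gamma_{11}{}^2=\pm\Gamma_{22}{}^1$, yielding the two-parameter families $\Gamma_{0,1},\Gamma_{0,2}$ of Equations~(\ref{E3.b})--(\ref{E3.c}). On these families $(\psi_3,\Psi_3)$ are completely explicit (Equations~(\ref{E2.b}) and (\ref{E3.d})), and the image is read off by elementary monotonicity and change-of-variable arguments carried out case by case in Sections~\ref{S2}--\ref{S3}. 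The curves $\sigma_\pm$ of Equation~(\ref{E1.c}) then appear concretely as $\sigma_\pm(t)=\Theta_\pm(\Gamma_\pm(t,0))$, and Section~4.1 checks by direct substitution that these coincide with the boundaries of $\Theta_0(\mathfrak{Z}_0)$ found in Section~\ref{S3}. Your intuition that the boundary curves come from the degenerate-stabilizer locus is correct (this is exactly the $y=0$ locus, cf.\ Theorem~\ref{T1.7}), but the paper exploits this by building it into the parametrization rather than by trying to eliminate variables from a three-parameter family after the fact.
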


In Fig. 1.1 below, we show the two curves $\sigma_\pm$ which bound 
the images of the moduli spaces $\Theta_-(\mathfrak{Z}_-)$, 
$\Theta_0(\mathfrak{Z}_0)$, and $\Theta_+(\mathfrak{Z}_+)$
\smallbreak\hglue 3cm\vbox{\includegraphics[height=3.5cm,keepaspectratio=true]{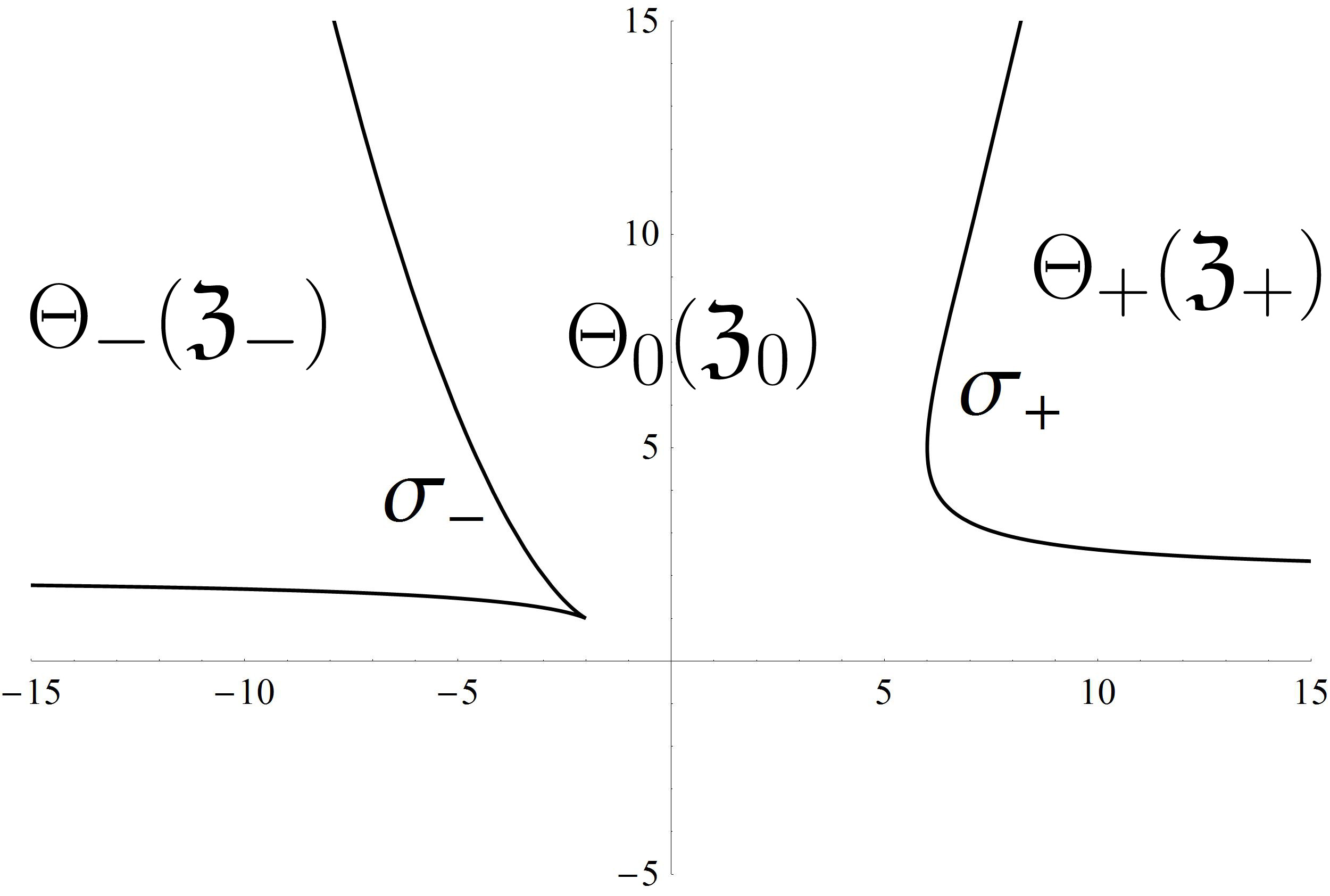}
 \vglue -2cm\hglue 6 cm$\Leftarrow$ Fig.~1.1\vglue 1cm}
\medbreak\noindent
Note that although
$\Theta_\varepsilon$ is 1-1 on $\mathfrak{Z}_\varepsilon$, we have
that $\Theta_+(\mathfrak{Z}_+)$ intersects $\Theta_0(\mathfrak{Z}_0)$
along their common boundary $\sigma_+$ and
that $\Theta_-(\mathfrak{Z}_-)$ intersects $\Theta_0(\mathfrak{Z}_0)$
along their common boundary $\sigma_-$. This does not mean that $\mathfrak{Z}_+$ (resp. $\mathfrak{Z}_-$) intersects
$\mathfrak{Z}_0$ nor does it mean that $\Theta_-$, or $\Theta_0$, or
$\Theta_+$ are not 1-1
on their respective domains.

We have worked in the category of unoriented surfaces to set
$\mathfrak{Z}_\varepsilon=\mathcal{Z}_\varepsilon/\operatorname{GL}(2,\mathbb{R})$. 
If we work in the category
of oriented surfaces, we may obtain a corresponding moduli space 
$\mathfrak{Z}_\varepsilon^+=\mathcal{Z}_\varepsilon/\operatorname{GL}^+(2,\mathbb{R})$ of oriented
affine surfaces of Type~$\mathcal{A}$ where 
$\operatorname{GL}^+(2,\mathbb{R})$ denotes the group of matrices with
positive determinant. If $\Gamma\in\mathcal{Z}_\varepsilon$, let
\begin{eqnarray*}
&&\mathcal{G}^+(\Gamma):=\{T\in\operatorname{GL}^+(2,\mathbb{R}):T^*\Gamma=\Gamma\},\\
&&\mathcal{G}(\Gamma):=\{T\in\operatorname{GL}(2,\mathbb{R}):T^*\Gamma=\Gamma\}
\end{eqnarray*}
be the isotropy subgroups of the actions. 
Let $\text{dvol}:=|\det(\rho_{ij})|^{1/2}dx^1\wedge dx^2$ be the oriented volume form. Extend $\rho$
to an inner product on tensors of all type and sum over repeated indices to define an invariant $\chi$
which is sensitive to the orientation:
\begin{equation}\label{E1.d}
\chi(\Gamma):=\rho(\Gamma_{ab}{}^b\Gamma_{ij}{}^k\rho^3_{kl}\rho^{ij}dx^a\wedge dx^l,\text{dvol})\,.
\end{equation}
Since this is given in terms of contraction of indices, it is an affine invariant
by Lemma \ref{L1.3}.

The cusp point $\Gamma_{\text{csp}}\in\mathfrak{Z}_-$ which satisfies
$\Theta_-(\Gamma_{\text{csp}})=(-2,1)$ is singular and corresponds to the structure $\sigma_-(\frac1{\sqrt2})$;
$$[\Gamma_{\text{csp}}]=[
\{\Gamma_{11}{}^1=-\textstyle\frac1{\sqrt 2},\,\,\Gamma_{11}{}^2=0,\,\,\Gamma_{12}{}^1=0,\,\,\Gamma_{12}{}^2=\textstyle\frac1{\sqrt 2},\,\,\Gamma_{22}{}^1=\textstyle\frac1{\sqrt 2},\,\,\Gamma_{22}{}^2=0\}]\,.
$$

Theorem~\ref{T1.7} shows the sequence 
$\operatorname{GL}^+(2,\mathbb{R})\rightarrow\mathcal{Z}_\varepsilon\rightarrow\mathfrak{Z}_\varepsilon^+$
is a principle bundle except at the cusp point of $\mathfrak{C}_-$.
Furthermore, a structure $\Gamma$ with a given orientation
is isomorphic to $\Gamma$ with the opposite orientation if and only if $\Gamma$ belongs to the boundary of
one of the regions. This shows that $\mathfrak{Z}_\varepsilon^+$ is the double of $\mathfrak{Z}_\varepsilon$
where we glue two copies of $\mathfrak{Z}_\varepsilon$ together along the boundary as described above
in Fig.~1.1.

\begin{theorem}\label{T1.7}
\ \begin{enumerate}
\item Let $\Gamma\in\mathcal{Z}_+$. Then $|\mathcal{G}^+(\Gamma)|=1$. 
\begin{enumerate}
\item If $\Theta_+\Gamma\in\operatorname{bd}(\mathfrak{C}_+)$, then $|\mathcal{G}(\Gamma)|=2$.
\item If $\Theta_+\Gamma\notin\operatorname{bd}(\mathfrak{C}_+)$, then $|\mathcal{G}(\Gamma)|=1$.
\end{enumerate}
\item Let $\Gamma\in\mathcal{Z}_-$.
\begin{enumerate}
\item If $\Theta_-(\Gamma)=(-2,1)$, then
$|\mathcal{G}^+(\Gamma)|=3$ and $|\mathcal{G}(\Gamma)|=6$.
\item If $\Theta_-(\Gamma)\in\operatorname{bd}(\mathfrak{C}_+)-\{(-2,1)\}$, then 
$|\mathcal{G}^+(\Gamma)|=1$ and $|\mathcal{G}(\Gamma)|=2$. 
\item If $\Theta_-(\Gamma)\notin\operatorname{bd}(\mathfrak{C}_+)$, then
$|\mathcal{G}^+(\Gamma)|=1$ and $|\mathcal{G}(\Gamma)|=1$. 
\end{enumerate}
\item Let $\Gamma\in\mathcal{Z}_0$. Then $|\mathcal{G}^+(\Gamma)|=1$. 
\begin{enumerate}
\item If $\Theta_0\Gamma\in\operatorname{bd}(\mathfrak{C}_0)$, then $|\mathcal{G}(\Gamma)|=2$.
\item If $\Theta_0\Gamma\notin\operatorname{bd}(\mathfrak{C}_0)$, then $|\mathcal{G}(\Gamma)|=1$.
\end{enumerate}
\item The map $\tilde\Theta_\varepsilon:=(\psi_3,\Psi_3,\chi)$ is a 
$1-1$ embedding of $\mathfrak{Z}_\varepsilon$
into $\mathbb{R}^3$ for $\varepsilon=-,0,+$. The embedding $\Theta_\varepsilon$
is the projection of $\tilde\Theta_\varepsilon$ on the horizontal plane. 
It is a ``fold" map that
is $1-1$ except along the boundary of $\Theta_\varepsilon(\mathfrak{Z}_\varepsilon)$
which is locus where $\chi=0$.
\end{enumerate}
\end{theorem}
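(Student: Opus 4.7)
The plan is to exploit the additional invariant $\chi$ of~\eqref{E1.d} together with the injectivity of $\Theta_\varepsilon$ from Theorem~\ref{T1.5}, and to handle the cusp by a direct computation. First I would verify the orientation-flip identity
\[\chi(T^*\Gamma)=\operatorname{sign}(\det T)\cdot\chi(\Gamma)\quad\text{for every }T\in\operatorname{GL}(2,\mathbb{R}).\]
The $(0,2)$-tensor $\omega_{al}:=\Gamma_{ab}{}^b\Gamma_{ij}{}^k\rho^3_{kl}\rho^{ij}$ is built entirely by tensor contractions and so transforms tensorially by Lemma~\ref{L1.3}; the only orientation-sensitive ingredient in~\eqref{E1.d} is $\text{dvol}$, which picks up $\operatorname{sign}(\det T)$ under pullback. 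Thus $\chi$ is $\operatorname{GL}^+(2,\mathbb{R})$-invariant, and any orientation-reversing element of $\mathcal{G}(\Gamma)$ forces $\chi(\Gamma)=0$.

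Next I would establish finiteness of the stabilizers. Since every $S\in\mathcal{G}(\Gamma)$ preserves the non-degenerate form $\rho_\Gamma$, we have $\mathcal{G}(\Gamma)\subset O(\rho_\Gamma)$, a one-dimensional Lie group. A positive-dimensional $\mathcal{G}^+(\Gamma)$ would force the $\operatorname{GL}^+(2,\mathbb{R})$-orbit of $\Gamma$ to have dimension at most three in the six-dimensional space $\mathcal{Z}_\varepsilon$, contradicting the two-dimensionality of $\mathfrak{Z}_\varepsilon$ from \cite{KV03}. Hence $\mathcal{G}^+(\Gamma)$ is finite for every $\Gamma$. For triviality away from the cusp, one notes that $SO(1,1)$ has no non-identity element of finite order, ruling out non-trivial stabilizers in the indefinite case $\varepsilon=0$; in the definite cases a non-trivial $S\in\mathcal{G}^+(\Gamma)$ is a rotation by $2\pi/k$, and parametrizing $\Gamma$ in an orthonormal basis for $\rho_\Gamma$ and imposing $k$-fold invariance pins down $k=3$ and $\Gamma$ conjugate to $\Gamma_{\text{csp}}$.

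The dichotomies in (1)--(3) then follow formally. In the interior of $\mathfrak{C}_\varepsilon$, $\chi(\Gamma)\neq 0$, so the first step gives $\mathcal{G}(\Gamma)=\mathcal{G}^+(\Gamma)=\{e\}$. On a boundary point (away from the cusp), $\Theta_\varepsilon[\Gamma]=\Theta_\varepsilon[T_0^*\Gamma]$ for any fixed orientation-reversing $T_0$; Theorem~\ref{T1.5} yields $S\in\operatorname{GL}(2,\mathbb{R})$ with $S^*\Gamma=T_0^*\Gamma$, and the matching of $\chi$-values forces $\det S<0$, so $T_0^{-1}S$ is a non-trivial orientation-reversing element of $\mathcal{G}(\Gamma)$. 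Combined with $|\mathcal{G}^+(\Gamma)|=1$ this gives $|\mathcal{G}(\Gamma)|=2$; at the cusp the same argument yields $|\mathcal{G}(\Gamma)|=2\cdot 3=6$. For~(4), injectivity of $\tilde\Theta_\varepsilon$ on $\mathfrak{Z}_\varepsilon^+$ follows because matching of the three invariants yields $\operatorname{GL}(2,\mathbb{R})$-equivalence by Theorem~\ref{T1.5} and then $\chi$-matching forces the connecting matrix to preserve orientation; the projection $\Theta_\varepsilon$ then folds exactly along $\{\chi=0\}=\operatorname{bd}(\Theta_\varepsilon(\mathfrak{Z}_\varepsilon))$.

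The main obstacle will be the cusp analysis: concretely identifying the $\mathbb{Z}/3$ rotational symmetry of $\Gamma_{\text{csp}}$ and ruling out any larger continuous or discrete symmetry. The natural computation passes to a canonical frame for $\rho_{\text{csp}}$ and exhibits the $2\pi/3$-rotational invariance of the Christoffel components directly; combining with the orientation-reversing reflection recovers the dihedral group $D_3$ of order $6$.
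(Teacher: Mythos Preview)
Your high-level strategy is sound, but the central implication you rely on --- that $\{\chi=0\}$ coincides with the boundary of $\mathfrak{C}_\varepsilon$ and that every boundary point admits an orientation-reversing self-symmetry --- is not established by your argument. You assert ``in the interior of $\mathfrak{C}_\varepsilon$, $\chi(\Gamma)\neq0$'' without proof, and your proposed construction of an orientation-reversing $T\in\mathcal{G}(\Gamma)$ on the boundary is vacuous: $\Theta_\varepsilon[\Gamma]=\Theta_\varepsilon[T_0^*\Gamma]$ holds for \emph{every} $\Gamma$ (since $\psi_3,\Psi_3$ are full $\operatorname{GL}(2,\mathbb{R})$-invariants), and Theorem~\ref{T1.5} then returns nothing beyond the tautological $S=T_0$, giving $T_0^{-1}S=\operatorname{id}$. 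The implication $\chi(\Gamma)=0\Rightarrow\mathcal{G}(\Gamma)\supsetneq\mathcal{G}^+(\Gamma)$ is \emph{not} formal; conversely, to get $\chi\neq0$ off the boundary you need to actually compute $\chi$ in the parametrizations, which is what the paper does (see Equation~(4.a), where $\chi(\Gamma_\pm(x,y))$ is shown to share its zero locus with the Jacobian $J_\pm$). Without this computation your dichotomies (1)--(3) and the injectivity in (4) along $\{\chi=0\}$ are unproved.

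Two further points. First, your finiteness argument by dimension count is not rigorous as stated: a single orbit of dimension $\le 3$ does not by itself contradict a two-dimensional moduli space, since lower-dimensional singular strata are a priori allowed; you would need to argue that any orbit with positive-dimensional stabilizer lies in a family of such orbits of positive dimension in the quotient. The paper bypasses this by computing stabilizers directly. Second, your cusp analysis is too sketchy to distinguish $\varepsilon=+$ from $\varepsilon=-$: you must explain why imposing $\mathbb{Z}/3$-rotational invariance on $\Gamma_\pm(x,y)$ yields \emph{no} solution when $\rho>0$ but the unique solution $(x,y)=(\tfrac{1}{\sqrt2},0)$ when $\rho<0$. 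The paper does this by writing out $(T_{2\pi/3}^*\Gamma_\pm)_{11}{}^2$ explicitly and solving; you should do likewise, since the sign difference in $\Gamma_{11}{}^1=x\pm\tfrac1x$ is exactly what separates the two cases.
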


\subsection{Type~$\mathcal{B}$ surfaces}
 
We now examine the moduli space of Type~$\mathcal{B}$ surfaces
$\mathfrak{Z}_{\mathcal{B}}$.
Let $\mathcal{M}_{C}:=(\mathbb{R}^+\times\mathbb{R},\Gamma=(x^1)^{-1}C) $ 
be the surface of 
Type~$\mathcal{B}$ determined by $C\in\mathbb{R}^6$. 
Let $\kappa(\mathcal{M}_C)$ be the dimension of the space of affine Killing vector fields on 
$\mathcal{M}_C$. We established the following result previously \cite{BGGP16}:
\begin{theorem}
If $\mathcal{M}_C$ is not flat, then $2\le\kappa\le4$. Furthermore, $\mathcal{M}$ is also 
Type~$\mathcal{A}$ if and only if $\kappa=4$. 
\end{theorem}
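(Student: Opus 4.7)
\emph{Plan.} The lower bound $\kappa\geq 2$ is immediate: the $ax+b$-action $(x^1,x^2)\mapsto(ax^1,ax^2+b)$ preserves $\nabla$ on $\mathcal{M}_C$, producing two linearly independent affine Killing vector fields $X_1=\partial_{x^2}$ and $X_2=x^1\partial_{x^1}+x^2\partial_{x^2}$, with bracket $[X_1,X_2]=X_1$.

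For the upper bound I would use an infinitesimal isotropy argument. Local homogeneity forces the affine Killing algebra $\mathfrak{g}$ to evaluate onto $T_pM$ at every point $p$, so $\dim\mathfrak{g}=2+\dim\mathfrak{h}_p$, where the isotropy $\mathfrak{h}_p$ is the subalgebra of Killing fields vanishing at $p$. Each $X\in\mathfrak{h}_p$ determines a linear map $A=\nabla X|_p\in\mathrm{End}(T_pM)$, and because $\mathcal{L}_X\nabla=0$ implies $\mathcal{L}_X\rho=0$, the endomorphism $A$ must be skew with respect to $\rho_p$: $A^k_i\rho_{kj}+A^k_j\rho_{ik}=0$. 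Non-flatness gives $\rho_p\neq 0$, and a direct linear-algebra computation bounds the space of skew-$\rho_p$ endomorphisms by $2$ (attained when $\rho_p$ has rank $1$; one-dimensional in the non-degenerate symmetric case). Additional constraints from preserving $\nabla\rho$, $R$, etc., only tighten this, so $\dim\mathfrak{h}_p\leq 2$ and $\kappa\leq 4$.

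For the equivalence, the direction ``$\mathcal{M}_C$ also Type~$\mathcal{A}$ $\Rightarrow\kappa=4$'' is concrete: in Type~$\mathcal{A}$ coordinates $(y^1,y^2)$ the translations $\partial_{y^1},\partial_{y^2}$ are affine Killing fields with abelian bracket, so together with the non-abelian pair $X_1,X_2$ they span a $4$-dimensional space (the abelian-versus-non-abelian bracket structure at $p$ rules out linear dependence). For the converse, $\kappa=4$ forces $\dim\mathfrak{h}_p=2$, which by the isotropy analysis forces $\operatorname{Rank}\rho=1$. The $4$-dimensional algebra $\mathfrak{g}$ contains the non-abelian subalgebra $\mathfrak{aff}(1)=\langle X_1,X_2\rangle$ and the $2$-dimensional $\mathfrak{h}_p$; a case analysis of real $4$-dimensional Lie algebras satisfying these constraints exhibits a $2$-dimensional abelian subalgebra $\mathfrak{t}\subset\mathfrak{g}$ whose two generators are linearly independent at some point. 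The commuting flows of $\mathfrak{t}$ produce local coordinates in which the Christoffel symbols become constant, furnishing a Type~$\mathcal{A}$ atlas.

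The hardest step is the final one: one must check via the structure theory of real $4$-dimensional Lie algebras containing $\mathfrak{aff}(1)$ that an abelian subalgebra with the required transversality always appears, ruling out potential exceptional cases. The compatibility of the rank-$1$ Ricci condition with the prescribed $ax+b$ action and the $2$-dimensional isotropy should force precisely the Lie-algebraic types that admit such an abelian subalgebra, closing the argument.
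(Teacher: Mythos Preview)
The paper does not prove this result; it cites the companion paper \cite{BGGP16}. From the proof of Lemma~\ref{L5.2}, where it is quoted from \cite{BGGP16} that $\kappa(\mathcal{M}_C)=4$ holds precisely when $C_{12}{}^1=C_{22}{}^1=C_{22}{}^2=0$, one infers that the argument there is computational: solve the affine Killing equations for a general Type~$\mathcal{B}$ connection explicitly and read off both the bounds $2\le\kappa\le4$ and the characterization of equality directly from the answer.

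Your isotropy approach to the upper bound has a real gap. You claim the space of $A\in\operatorname{End}(T_pM)$ with $A^k_i\rho_{kj}+A^k_j\rho_{ik}=0$ has dimension at most~$2$, but when $\rho_p$ is nonzero and purely antisymmetric this condition reduces to $\operatorname{tr}(A)=0$, which cuts out a $3$-dimensional subspace; your argument then yields only $\kappa\le 5$. The paper notes explicitly that the Ricci tensor of a Type~$\mathcal{B}$ surface need not be symmetric (and \cite{KVOp2} treats precisely the skew case), so you cannot sidestep this. The appeal to ``additional constraints from $\nabla\rho$, $R$'' is not a proof: in dimension~$2$ the full curvature $R$ is equivalent to $\rho$ and adds nothing, so you would have to actually compute what $\nabla\rho$ forces in the skew case.

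The equivalence is also not established. For ``Type~$\mathcal{A}\Rightarrow\kappa=4$'', the abelian translation pair and the non-abelian $ax+b$ pair can overlap: the $3$-dimensional algebra $\langle e_1,e_2,e_3\rangle$ with $[e_1,e_2]=e_1$ and $e_3$ central contains both a $2$-dimensional abelian and a $2$-dimensional non-abelian subalgebra sharing $e_1$, so ``abelian versus non-abelian bracket structure'' alone does not force dimension~$4$. For the converse, you defer everything to an unperformed case analysis of $4$-dimensional real Lie algebras; that is where the actual content lies, and the explicit computation in \cite{BGGP16} bypasses it entirely via the condition $C_{12}{}^1=C_{22}{}^1=C_{22}{}^2=0$.
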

Since the case that $\mathcal{M}_C$ is also of Type~$\mathcal{A}$ has already been dealt with, 
we shall assume henceforth
that $2\le\kappa\le 3$. Let 
$$
\mathcal{Z}_{23\mathcal{B}}:=
\{C\in\mathbb{R}^6:2\le\kappa(\mathcal{M}_C)\le3\}\subset\mathbb{R}^6
$$
and let $\mathfrak{Z}_{23\mathcal{B}}$ be the associated moduli space. 

Let $C\in\mathcal{Z}_{23\mathcal{B}}$ define a homogeneous structure on $\mathbb{R}^+\times\mathbb{R}$
with $2\le\kappa(\mathcal{M}_C)\le3$.
Use $\pm dx^1\wedge dx^2$ to define the corresponding oriented surface $\mathcal{M}_C^\pm$. Let 
$\mathfrak{Z}_{23\mathcal{B}}^+$ be the associated moduli space of oriented surfaces with 
$2\le\kappa(\mathcal{M}_C^\pm)\le 3$. Let
$$
\begin{array}{l}
\mathfrak{G}:=\{T:
T(x^1,x^2)=(ax^1,bx^1+cx^2+d)\text{ for }a>0\text{ and }c\ne0\},\\[0.05in]
\mathfrak{I}:=\{T_{b,c}\in\mathfrak{G}:T_{b,c}(x^1,x^2)=(x^1,bx^1+cx^2)\text{ for }c\ne0\},\\[0.05in]
\mathfrak{I}^+:=\{T_{b,c}\in\mathfrak{I}\text{ for }c>0\}\,.
\end{array}$$
Our previous discussion in \cite{BGGP16} permits us to identify 
$$
\mathfrak{Z}_{23\mathcal{B}}=\mathcal{Z}_{23\mathcal{B}}/\mathfrak{I}\text{ and }
\mathfrak{Z}_{23\mathcal{B}}^+=\mathcal{Z}_{23\mathcal{B}}/\mathfrak{I}^+\,.
$$
This is a non-trivial assertion if $\kappa(\mathcal{M})=3$ as there are non-linear affine transformations.
However, they play no role in defining the affine isomorphism type.
We define several invariant tensors on $\mathfrak{Z}_{23\mathcal{B}}$ that allow us to introduce coordinates and to obtain the structure of the moduli space $\mathfrak{Z}_{23\mathcal{B}}$:   
\begin{eqnarray*}
&&\rho_1:=\textstyle\frac1{x^1}
\{\Gamma_{12}{}^2dx^1\otimes dx^1+\Gamma_{22}{}^2dx^1\otimes dx^2
-\Gamma_{12}{}^1dx^2\otimes dx^1-\Gamma_{22}{}^1dx^2\otimes dx^2\},\nonumber\\
&&\rho_2:=\Gamma_{ij}{}^k\Gamma_{kl}{}^ldx^i\otimes dx^j,\quad
\rho_3:=\Gamma_{ik}{}^l\Gamma_{jl}{}^kdx^i\otimes dx^j,\quad
\rho_0:=\Gamma_{ij}{}^jdx^i,\\
&&\rho_4:=\Gamma_{ij}{}^kC_{ak}{}^jC_{bc}{}^idx^a\otimes dx^b\otimes dx^c\,.\nonumber
\end{eqnarray*}
We shall show in Lemma \ref{L5.1} that these tensors are invariantly defined.

In Lemma~\ref{L5.2}, we will examine the
isotropy subgroup of the natural action of $\mathfrak{I}^+$ on $\mathcal{K}_{23B}$, we will
show that $\mathcal{K}_{23\mathcal{B}}$ is a 4-dimensional real analytic manifold, and we will
prove the natural projection $\pi^+$ from $\mathcal{K}_{23\mathcal{B}}$ to $\mathfrak{K}_{23\mathcal{B}}^+$ is a real-analytic $\mathfrak{J}^+$ fiber bundle.
In Lemma~\ref{L5.3}, we show that $\mathfrak{Z}_{23\mathcal{B}}^+$ is simply connected and
that the second Betti number is 2. Our main result concerning Type~$\mathcal{B}$ structures is summarized in the following:

\begin{theorem}\label{T1.10}
The moduli space $\mathfrak{Z}_{23\mathcal{B}}$ is a $4$-dimensional real analytic manifold,
the natural projection from $\mathcal{Z}_{23\mathcal{B}}$ to $\mathfrak{Z}_{23\mathcal{B}}$
is real analytic,
$\mathfrak{Z}_{23\mathcal{B}}$ is simply connected,
and the second Betti number of $\mathfrak{Z}_{23\mathcal{B}}$  is 1.
\end{theorem}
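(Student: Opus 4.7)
The strategy is to derive Theorem~\ref{T1.10} from Lemmas~\ref{L5.1}--\ref{L5.3}, which already provide $\mathfrak{Z}_{23\mathcal{B}}^+$ as a simply connected real analytic $4$-manifold with second Betti number $2$. Since $\mathfrak{I}/\mathfrak{I}^+\cong\mathbb{Z}/2$, the identifications $\mathfrak{Z}_{23\mathcal{B}}=\mathcal{Z}_{23\mathcal{B}}/\mathfrak{I}$ and $\mathfrak{Z}_{23\mathcal{B}}^+=\mathcal{Z}_{23\mathcal{B}}/\mathfrak{I}^+$ exhibit $\mathfrak{Z}_{23\mathcal{B}}$ as $\mathfrak{Z}_{23\mathcal{B}}^+/(\mathbb{Z}/2)$, the nontrivial element being represented by $\sigma=T_{0,-1}\colon(x^1,x^2)\mapsto(x^1,-x^2)$. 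First I would compute $\sigma^*$ in coordinates: if $\nu_{ijk}$ denotes the number of indices in $(i,j,k)$ equal to $2$, then $\sigma^* C_{ij}{}^k=(-1)^{\nu_{ijk}}C_{ij}{}^k$. This yields a real-analytic involution $\tilde\sigma$ of $\mathcal{Z}_{23\mathcal{B}}$ which commutes with $\mathfrak{I}^+$, and hence descends to a real-analytic involution $\bar\sigma$ on the $4$-manifold $\mathfrak{Z}_{23\mathcal{B}}^+$.

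For the real-analytic structure on the quotient I would use the invariant tensors $\rho_0,\ldots,\rho_4$ of Lemma~\ref{L5.1}. The plan is to exhibit three $\bar\sigma$-invariant scalar contractions of these tensors which jointly give a real-analytic coordinate system on $\mathfrak{Z}_{23\mathcal{B}}^+$, together with one further contraction $\xi$, naturally built from $\rho_1$ paired with the oriented volume form, which is $\bar\sigma$-odd and vanishes precisely on the fixed locus of $\bar\sigma$. Replacing $\xi$ by $\xi^2$ yields a generating system for the $\mathbb{Z}/2$ invariant ring, hence a real-analytic chart on $\mathfrak{Z}_{23\mathcal{B}}$; this makes $\mathcal{Z}_{23\mathcal{B}}\to\mathfrak{Z}_{23\mathcal{B}}$ real analytic and presents $\mathfrak{Z}_{23\mathcal{B}}$ as a $4$-dimensional real analytic manifold.

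For simple connectedness, note that $\bar\sigma$ then has a nonempty fixed locus $F=\{\xi=0\}$. Given any loop $\gamma$ in $\mathfrak{Z}_{23\mathcal{B}}$ based at the image of a point $p_0\in F$, a path-lift $\tilde\gamma$ to $\mathfrak{Z}_{23\mathcal{B}}^+$ starts at $p_0$ and ends at either $p_0$ or $\bar\sigma(p_0)=p_0$, so $\tilde\gamma$ is a loop; it bounds in $\mathfrak{Z}_{23\mathcal{B}}^+$ by Lemma~\ref{L5.3}, and projecting the bounding disc null-homotopes $\gamma$. For the Betti number I would invoke the standard isomorphism $H^*(X/G;\mathbb{Q})\cong H^*(X;\mathbb{Q})^G$ for a finite group $G$ acting on a paracompact Hausdorff $X$. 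Since $\dim_\mathbb{Q}H^2(\mathfrak{Z}_{23\mathcal{B}}^+;\mathbb{Q})=2$, it suffices to verify that $\bar\sigma^*$ acts on $H^2$ with a one-dimensional fixed subspace; I would read this directly from the two generators of $H^2$ produced in the proof of Lemma~\ref{L5.3}, expecting $\bar\sigma$ to exchange them and hence fix only their sum.

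The principal obstacle I anticipate lies in the first step: identifying from $\rho_0,\ldots,\rho_4$ a genuine $\bar\sigma$-odd analytic invariant $\xi$ whose vanishing locus coincides with $\operatorname{Fix}(\bar\sigma)$ on $\mathfrak{Z}_{23\mathcal{B}}^+$, and checking that $\xi^2$ together with three $\bar\sigma$-invariants furnishes a real-analytic coordinate system through every point of the fixed locus. Once this normal form is in place, the topological conclusions follow by standard reflection-quotient and transfer arguments.
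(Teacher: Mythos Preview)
Your overall strategy---passing from $\mathfrak{Z}_{23\mathcal{B}}^+$ to its $\mathbb{Z}/2$-quotient---matches the paper's Theorem~\ref{T5.4}, and your argument for simple connectedness (lifting loops through a point of the fixed locus) is essentially the one the paper gives. However, there are two genuine gaps.

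First, the fixed-point set of $\bar\sigma$ has codimension~$2$, not codimension~$1$. In the chart $\mathcal{O}_1^\pm$ of Lemma~\ref{L5.2} the involution acts by $(z_1,z_2,z_3,z_4)\mapsto(z_1,-z_2,-z_3,z_4)$, so $\operatorname{Fix}(\bar\sigma)=\{z_2=z_3=0\}$; this is consistent with Lemma~\ref{L5.2}(2), which identifies the amphichiral locus as $C_{11}{}^2=C_{12}{}^1=C_{22}{}^2=0$. A single $\bar\sigma$-odd function $\xi$ cannot cut out a codimension-$2$ locus, and replacing one odd coordinate by its square does not give a smooth chart on the quotient: the invariant ring of $(z_2,z_3)\mapsto(-z_2,-z_3)$ is generated by $z_2^2,z_2z_3,z_3^2$ subject to a relation, so the naive real quotient is a cone. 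The paper's device is to pair the two odd coordinates into one complex variable $w=z_2+\sqrt{-1}\,z_3$ and use $w\mapsto w^2$; equivalently, $(z_1,z_4,z_2^2-z_3^2,2z_2z_3)$ is the quotient chart. Your plan of finding a single $\xi$ with $\xi^2$ completing a coordinate system cannot work as stated.

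Second, your Betti-number computation rests on reading $b_2(\mathfrak{Z}_{23\mathcal{B}}^+)=2$ from the introductory paragraph and then arguing that $\bar\sigma^*$ swaps two generators. But Lemma~\ref{L5.3}(3) actually gives $H^2_{\operatorname{DeR}}(\mathfrak{Z}_{23\mathcal{B}}^+)=\mathbb{R}$, i.e.\ $b_2=1$; the ``$2$'' in the introduction is a slip. There is only one generator, and the real content is to show that $\bar\sigma^*$ acts \emph{trivially} on it. The paper does this explicitly: the generator of $\pi_2$ comes from a sphere linking the codimension-$3$ subspace $K=\{C_{12}{}^1=C_{22}{}^1=C_{22}{}^2=0\}$, and $T$ acts on those three coordinates by two sign changes, hence by an orientation-preserving map homotopic to the identity on $S^2$. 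With $[T]_*=\operatorname{id}$ on $H_2$, a transfer argument adapted to the branched cover gives $H_2(\mathfrak{Z}_{23\mathcal{B}};\mathbb{R})\cong\mathbb{R}$. Your invariants formula $H^*(X/G;\mathbb{Q})\cong H^*(X;\mathbb{Q})^G$ would reach the same conclusion once the input is corrected, but the $G$-action you sketch (a swap of two classes) is not what actually occurs.
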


In Theorem~\ref{T5.4} we complete the proof of Theorem~\ref{T1.10}
by giving $\mathfrak{Z}_{23\mathcal{B}}$ a real analytic structure, by
showing that the projection from $\mathfrak{Z}_{23\mathcal{B}}^+$ to 
$\mathfrak{Z}_{23\mathcal{B}}$
is a $\mathbb{Z}_2$ branched cover where the ramification set is a real analytic sub-manifold of
co-dimension 2, and by demonstrating that $\mathfrak{Z}_{23\mathcal{B}}^+$ is simply connected and
has second Betti number equal to 1.

\subsection{Outline of the paper} In Section~\ref{S2}, we prove
Theorem~\ref{T1.5} in the positive and negative definite settings. In Section~\ref{S3},
we prove Theorem~\ref{T1.5} in the indefinite setting. In Section~\ref{S4} we prove
 Theorem~\ref{T1.6} and Theorem~\ref{T1.7} and in
Section~\ref{S5} we prove Theorem~\ref{T1.10}.

\section{The moduli spaces $\mathfrak{Z}_\pm$}\label{S2}

Recall from \cite{BGGP16} that if $\mathcal{M}$ is a surface of Type~$\mathcal{A}$ with $\operatorname{Rank}\{\rho\}=2$,
then $\mathcal{M}$ is not of Type~$\mathcal{B}$. Moreover the structure group is the affine group
\begin{eqnarray*}
&&\mathfrak{F}:=\{T\in\operatorname{Diff}(\mathbb{R}^2):T(x^1,x^2)=(a_1^1x^1+a_2^1x^2+b^1,a_1^2x^1+a_2^2x^2+b^2)\\
&&\qquad\qquad\text{ where }
(a_i^j)\in\operatorname{GL}(2,\mathbb{R})\}\,.
\end{eqnarray*}
Further observe that since any Type~$\mathcal{A}$ surface is projectively flat (see \cite{CGV10}), 
if $\operatorname{Rank}\{\rho\}=2$, then the Ricci tensor defines a metric so that $(\rho,\nabla)$ is a Codazzi pair, also called a statistical structure (see \cite{Op15} and references therein). 

\begin{lemma}\label{L1.3}
Let $\mathcal{M}=(M,\nabla)$ be a Type~$\mathcal{A}$ affine surface with $\operatorname{Rank}(\rho)=2$.
Then transition functions of a Type~$\mathcal{A}$ coordinate atlas on $\mathcal{M}$ belong to the affine group $\mathfrak{F}$, and
$\psi_3$ and $\Psi_3$ defined in Equation \eqref{E1.b} are independent of the local Type~$\mathcal{A}$
coordinates chosen and thus are affine invariants of $\mathcal{M}$.
\end{lemma}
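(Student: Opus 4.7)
The proof splits into two distinct parts, and I would handle them in order: first show that transitions of Type~$\mathcal{A}$ charts lie in $\mathfrak{F}$, then derive the affine invariance of $\psi_3$ and $\Psi_3$ as a consequence. For the first part, the key observation is that $\operatorname{Rank}(\rho)=2$, together with the symmetry of $\rho$ on Type~$\mathcal{A}$ surfaces, makes $\rho$ a pseudo-Riemannian metric. Formula \eqref{E1.a} shows that in any Type~$\mathcal{A}$ chart the components $\rho_{ij}$ are constant, so the Christoffel symbols of the Levi-Civita connection $\bar\nabla$ of $\rho$ satisfy
$$\bar\Gamma_{ij}{}^k=\tfrac{1}{2}\rho^{kl}\bigl(\partial_i\rho_{jl}+\partial_j\rho_{il}-\partial_l\rho_{ij}\bigr)=0.$$
Hence $\bar\nabla$ is flat in this chart and every coordinate vector field $\partial_{x^i}$ is $\bar\nabla$-parallel. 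Given overlapping Type~$\mathcal{A}$ charts $x$ and $\tilde x$, both frames $\{\partial_{x^i}\}$ and $\{\partial_{\tilde x^j}\}$ are $\bar\nabla$-parallel, so the Jacobian entries $\partial\tilde x^j/\partial x^i$ are $\bar\nabla$-parallel scalars, hence constant on each connected component of the overlap. This forces the transition to be affine.

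Once the transition functions are known to lie in $\mathfrak{F}$, the inhomogeneous second-derivative term in the usual transformation law for Christoffel symbols vanishes, so $\Gamma_{ij}{}^k$ transforms as a genuine $(1,2)$-tensor between Type~$\mathcal{A}$ charts. Consequently $\rho^3_{ij}=\Gamma_{ik}{}^l\Gamma_{jl}{}^k$ is a well-defined symmetric $(0,2)$-tensor, and contracting with the inverse Ricci tensor gives the scalar $\psi_3=\rho^{ij}\rho^3_{ij}$. For $\Psi_3$, any $(0,2)$-tensor's determinant scales by $(\det A)^2$ under an affine change with linear part $A$, so $\det(\rho^3)$ and $\det(\rho)$ pick up the same factor and the ratio $\Psi_3$ is invariant.

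The main obstacle, as I see it, is really the rigidity step in the first part: the argument depends crucially on $\operatorname{Rank}(\rho)=2$, since it is precisely this hypothesis that lets us use $\rho$ to manufacture a flat auxiliary connection $\bar\nabla$ whose parallel frames then rigidify the coordinate change. Without non-degenerate $\rho$ one would need a substantially different argument to exclude non-affine coordinate transformations that still preserve constancy of the Christoffel symbols, and indeed the rank-one case (handled separately in \cite{BGGP16}) exhibits genuinely different features.
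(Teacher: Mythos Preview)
Your proof is correct and follows essentially the same route as the paper's. Both arguments hinge on the fact that, since $\rho_{ij}$ is constant in any Type~$\mathcal{A}$ chart, $\rho$ is a flat pseudo-Riemannian metric, and this forces the transition to be affine; the paper phrases this as ``isometries between flat metrics have constant differential'', while you unpack it via the parallel-frame argument for the Levi-Civita connection $\bar\nabla$. For the invariance of $\psi_3$ and $\Psi_3$ the paper simply invokes that contractions are $\operatorname{GL}(2,\mathbb{R})$-invariant, whereas you spell out the determinant-scaling for $\Psi_3$; both are valid and amount to the same thing (indeed $\Psi_3=\det(\rho^{-1}\rho^3)$ is the determinant of an endomorphism, hence a scalar built from contractions).
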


\begin{proof}
Cover $M$ by Type~$\mathcal{A}$ coordinate charts $(\mathcal{O}_\alpha,\phi_\alpha)$
so ${}^\alpha\Gamma$ is constant. The transition functions $\phi_{\alpha\beta}$ 
then are local diffeomorphisms
of $\mathbb{R}^2$ so that $\phi_{\alpha\beta}^*\{{}^\beta\rho\}={}^\alpha\rho$.
The Ricci tensors ${}^\alpha\rho$
and ${}^\beta\rho$ define flat pseudo-Riemannian metrics. 
This implies $d\phi_{\alpha\beta}$ is constant and, consequently, $\phi_{\alpha\beta}$ is
an affine linear transformation. Since contracting upper and lower indices is
invariant under the action of $\operatorname{GL}(2,\mathbb{R})$, $\psi_3$ and $\Psi_3$ are invariants.
\end{proof}

We begin by parametrizing the moduli spaces $\mathfrak{Z}_\pm$.
 \begin{definition}\label{D2.1}\rm
Let $\mathcal{S}:=\PBG$. If $(x,y)\in\mathbb{R}^2$ with $x\ne0$, define: 
\smallbreak
$\Gamma_\pm(x,y):=\{
\Gamma_{11}{}^1=\textstyle x\pm\frac1x,
\Gamma_{11}{}^2=0,\Gamma_{12}{}^1=0,\Gamma_{12}{}^2=x,
\Gamma_{22}{}^1=x,\Gamma_{22}{}^2=y\}.
$\smallbreak\noindent
By Equation~(\ref{E1.a}),
$\rho(\Gamma_\pm(x,y))=\pm\operatorname{Id}$ so
$\Gamma_\pm$ takes values in $\mathcal{Z}_\pm$.
\end{definition}

If $\Gamma\in\mathcal{Z}_\pm$, let $[\Gamma]$ denote the corresponding element of
$\mathfrak{Z}_\pm$.

\begin{lemma}\label{L2.2}
If $[\Gamma]\in\mathfrak{Z}_\pm$, 
then there exists $(x,y)\in \mathcal{S}$ so $[\Gamma]=[\Gamma_\pm(x,y)]$.\end{lemma}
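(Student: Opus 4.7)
My plan is to exploit the $\operatorname{GL}(2,\mathbb{R})$ action in two stages: first to normalize $\rho$ to $\pm\operatorname{Id}$, and then to use the residual $O(2)$ symmetry to reduce the Christoffel symbols to the canonical form $\Gamma_\pm(x,y)$. Since $\Gamma\in\mathcal{Z}_\pm$, the Ricci tensor $\rho_\Gamma$ is definite, so there exists $A\in\operatorname{GL}(2,\mathbb{R})$ with $A^*\rho=\pm\operatorname{Id}$; after replacing $\Gamma$ by $A^*\Gamma$, we may assume $\rho_\Gamma=\pm\operatorname{Id}$. The stabilizer of $\pm\operatorname{Id}$ inside $\operatorname{GL}(2,\mathbb{R})$ is the orthogonal group $\mathfrak{F}^\rho=O(2)$, and this is the remaining freedom.

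Next I will use a rotation $R_\theta\in SO(2)\subset\mathfrak{F}^\rho$ to achieve $\Gamma_{11}{}^2=0$. Consider the continuous function $f(\theta):=(R_\theta^*\Gamma)_{11}{}^2$. A direct computation from the transformation law $(T^*\Gamma)_{ij}{}^k=\tilde a_i^\alpha\tilde a_j^\beta a_\gamma^k\Gamma_{\alpha\beta}{}^\gamma$ shows that $f$ is homogeneous of odd degree in $(\cos\theta,\sin\theta)$, so $f(\theta+\pi)=-f(\theta)$. By the intermediate value theorem there exists $\theta_0\in[0,\pi)$ with $f(\theta_0)=0$. After rotating, we may assume $\Gamma_{11}{}^2=0$.

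With $\Gamma_{11}{}^2=0$ and $\rho=\pm\operatorname{Id}$, the equations in \eqref{E1.a} force the canonical form: $\rho_{12}=0$ gives $\Gamma_{12}{}^1\Gamma_{12}{}^2=0$; since $\rho_{11}=\pm 1$ forbids $\Gamma_{12}{}^2=0$, we conclude $\Gamma_{12}{}^1=0$; setting $x:=\Gamma_{12}{}^2$, the equation $\rho_{11}=\pm 1$ yields $\Gamma_{11}{}^1=x\pm\tfrac{1}{x}$, and then $\rho_{22}=\pm 1$ forces $\Gamma_{22}{}^1=x$. Writing $y:=\Gamma_{22}{}^2$, we have exhibited $\Gamma=\Gamma_\pm(x,y)$ for some $x\ne 0$ and $y\in\mathbb{R}$.

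Finally, to arrange $(x,y)\in\mathcal{S}$, I will use the reflections $\operatorname{diag}(-1,1)$ and $\operatorname{diag}(1,-1)$, which also lie in $O(2)=\mathfrak{F}^\rho$. A short check from the transformation law shows they preserve the canonical form and act on the parameters by $(x,y)\mapsto(-x,y)$ and $(x,y)\mapsto(x,-y)$ respectively; composing these as needed gives $x>0$ and $y\ge 0$. The main obstacle is the rotation step: identifying the symmetry $f(\theta+\pi)=-f(\theta)$ so that a zero of $f$ exists. Once that is granted, the remainder is straightforward algebra from \eqref{E1.a} plus three explicit reflection computations.
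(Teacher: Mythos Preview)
Your proof is correct and follows essentially the same strategy as the paper: normalize $\rho$ to $\pm\operatorname{Id}$, use a rotation together with the intermediate value theorem (via the odd parity $(T_{\theta+\pi}^*\Gamma)_{ij}{}^k=-(T_\theta^*\Gamma)_{ij}{}^k$) to kill one Christoffel component, use the Ricci equations~\eqref{E1.a} to kill a second, and then apply the reflections $\operatorname{diag}(\pm1,\pm1)$ to force $(x,y)\in\mathcal{S}$. The only difference is cosmetic: the paper rotates to achieve $\Gamma_{12}{}^1=0$ and then deduces $\Gamma_{11}{}^2=0$ from $\rho_{12}=-\Gamma_{11}{}^2\Gamma_{22}{}^1=0$ and $\rho_{22}\ne0$, whereas you rotate to achieve $\Gamma_{11}{}^2=0$ first and deduce $\Gamma_{12}{}^1=0$ from $\rho_{12}=\Gamma_{12}{}^1\Gamma_{12}{}^2=0$ and $\rho_{11}\ne0$.
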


\begin{proof} Choose $[\Gamma]\in\mathfrak{Z}_\pm$. 
We will show that there exists $T\in\operatorname{GL}(2,\mathbb{R})$ and a point $(x,y)\in\mathcal{S}$ so that 
$T^*\Gamma=\Gamma_\pm(x,y)$, i.e. that $\Gamma=\Gamma_\pm(x,y)$ after an appropriate change of coordinates.
We begin by making a linear change of coordinates to assume $\rho_\Gamma=\pm\operatorname{Id}$.
Consider the rotation
\begin{equation}\label{E2.a}
\begin{array}{l}
T_\theta(\partial_{x^1}):=\cos(\theta)\partial_{x^1}+\sin(\theta)\partial_{x^2},\\
T_\theta(\partial_{x^2}):=-\sin(\theta)\partial_{x^1}+\cos(\theta)\partial_{x^2}\,.
\end{array}\end{equation}
Let $(T_\theta^*\Gamma)$ be the expression of $\Gamma$ in this new coordinate system. Then
\begin{eqnarray*}
(T_\theta^*\Gamma)_{12}{}^1&=&\phantom{+}\cos^3(\theta)\Gamma_{12}{}^1
+\cos^2(\theta)\sin(\theta)(\Gamma_{22}{}^1+\Gamma_{12}{}^2-\Gamma_{11}{}^1)\\
&&+\cos(\theta)\sin^2(\theta)\{\Gamma_{22}{}^2
-\Gamma_{12}{}^1-\Gamma_{11}{}^2\}-\sin^3(\theta)\Gamma_{12}{}^2.
\end{eqnarray*}
Since $(T_\pi^*\Gamma)_{12}{}^1=-(T_0^*\Gamma)_{12}{}^1$,
the Intermediate Value Theorem shows that there exists $\theta$ so $(T_\theta^*\Gamma)_{12}{}^1=0$. 
There may, of course, be other values where this happens and this will play a role
in our subsequent development. 
It is exactly this step which fails in the indefinite setting as the structure group is $O(1,1)$
and not $O(2)$.

Normalize the coordinate system so that $\rho=\pm\operatorname{Id}$ and $\Gamma_{12}{}^1=0$. 
Because $\rho_{12}=-\Gamma_{11}{}^2\Gamma_{22}{}^1=0$ and
$\rho_{22}=(\Gamma_{11}{}^1-\Gamma_{12}{}^2)\Gamma_{22}{}^1\ne0$,
we conclude $\Gamma_{22}{}^1\ne0$ and $\Gamma_{11}{}^2=0$
so $\rho_{11}=(\Gamma_{11}{}^1-\Gamma_{12}{}^2)\Gamma_{12}{}^2$ and
$\rho_{22}=(\Gamma_{11}{}^1-\Gamma_{12}{}^2)\Gamma_{22}{}^1$.
Set $x:=\Gamma_{12}{}^2\ne0$ and $y:=\Gamma_{22}{}^2$; $x\ne0$. By changing the sign of $x^1$
and/or $x^2$, we may assume $x>0$ and $y\ge0$. We have
$\Gamma_{22}{}^1=x$.
We solve the equation $(\Gamma_{11}{}^1-x)x=\pm1$ to determine
$\Gamma_{11}{}^1$ and obtain
the normalizations defining $\Gamma_\pm$ to complete the proof.
\end{proof}

We establish some notation to use subsequently.
Set
$$
p_\pm(x,y):=\psi_3(\Gamma_\pm(x,y))\text{ and }P_\pm(x,y):=\Psi_3(\Gamma_\pm(x,y))
$$
to regard the invariants of Lemma~\ref{L1.3} as functions on $\mathcal{S}$. A direct
computation then shows:
\begin{equation}\label{E2.b}
\begin{array}{l}
p_\pm(x,y):=\pm4 x^2\pm\frac{1}{x^2}\pm y^2+2,\\
P_\pm(x,y):=4 x^4+x^2 \left(y^2\pm4\right)+\frac{y^2}{x^2}+2(1\pm y^2)\,.
\end{array}\end{equation}
Let $\Theta_\pm:=(p_\pm,P_\pm):\mathcal{S}\rightarrow\mathbb{R}^2$ and let
$J_\pm:=\det\{\Theta_\pm^\prime\}$ be the Jacobian
determinant. A direct computation shows:
\begin{equation}\label{E2.c}
\begin{array}{l}
J_-(x,y):=\frac{4 \left(x^2+1\right) y \left(4 x^6+x^4 y^2-x^2 \left(y^2+3\right)+1\right)}{x^5},\\
J_+(x,y):=-\frac{4 \left(x^2-1\right) y \left(4 x^6+x^4 y^2+x^2 \left(y^2-3\right)-1\right)}{x^5}.
\end{array}
\end{equation}
Thus $\Theta_\pm$ is a local diffeomorphism from $\mathcal{S}$ to $\mathbb{R}^2$ except
where $J_\pm(x,y)=0$. Set 
\begin{equation}\label{E2.d}
\begin{array}{ll}
Y_-(x):=\sqrt{\frac{1 + 4 x^6 - 3 x^2}{x^2 - x^4}}&\text{ for }0<x<1,\\
Y_+(x):=\sqrt{\frac{-4 x^6+3 x^2+1}{x^4+x^2}}&\text{ for }0<x\le1.
\end{array}\end{equation}
Let $L_\pm:=\{(x,y)\in\mathcal{S}:J_\pm(x,y)=0\}$ be the {\it Jacobi locus};
\begin{equation}\label{E2.e}
\begin{array}{llll}
L_-:=\{(x,y)\in\mathcal{S}:y=Y_-(x)\text{ for }0<x<1\text{ or } y=0\},\\
L_+:=\{(x,y)\in\mathcal{S}:y=Y_+(x)\text{ for }0<x\le 1\text{ or } y=0\text{ or }x=1\}.
\end{array}\end{equation}
\smallbreak\centerline{
\includegraphics[width=3.5cm,keepaspectratio=true]{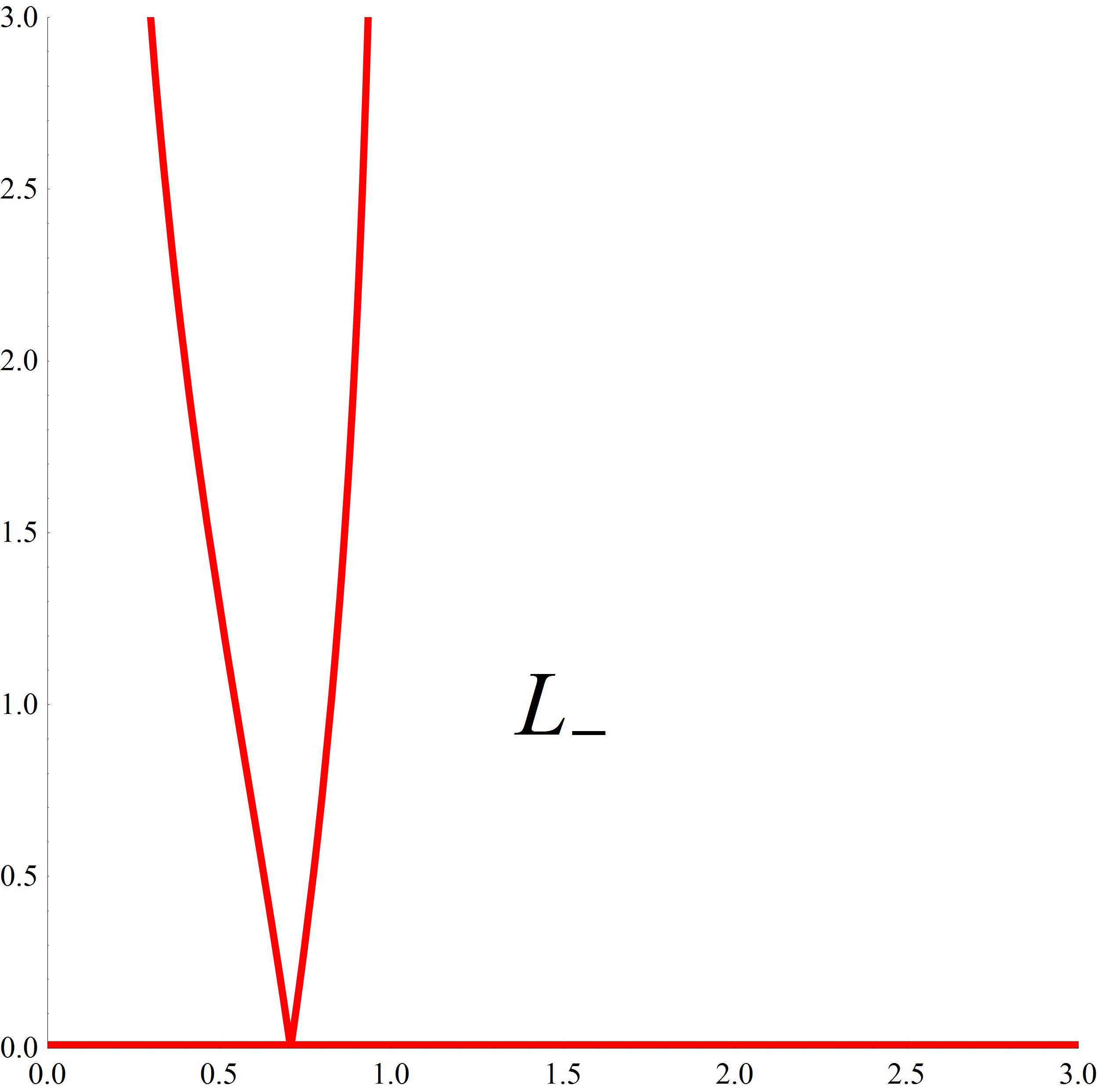}\quad
\includegraphics[width=3.5cm,keepaspectratio=true]{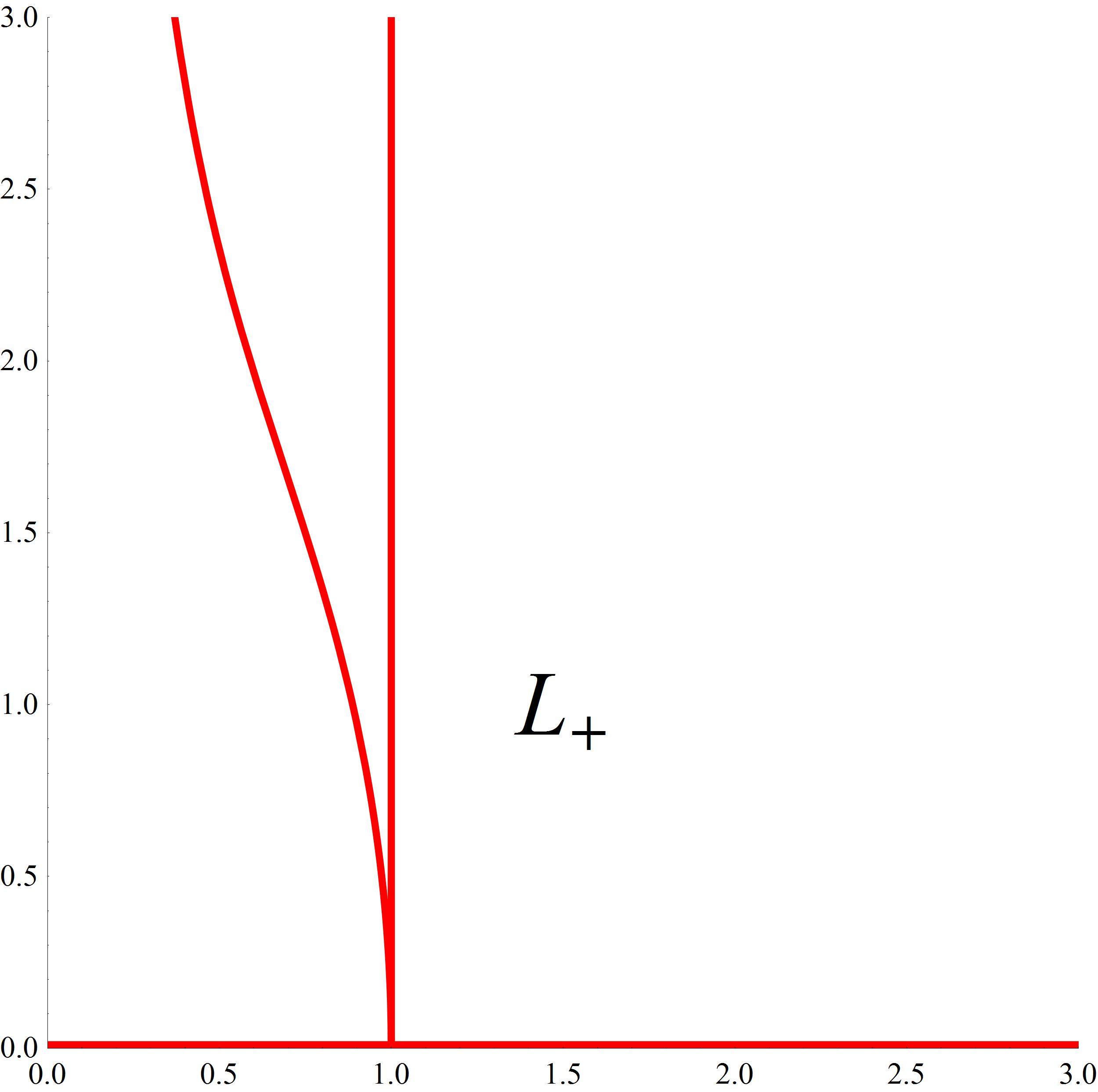}}
\smallbreak\hfill Fig.~2.1 $\Uparrow$\hfill Fig.~2.2 $\Uparrow$\hfill\vphantom{.}
\smallbreak\noindent Fig.~2.1 gives the Jacobi locus $L_-$ and 
Fig.~2.2 gives the Jacobi locus $L_+$.
The $x$ axis is always on the Jacobi locus. The half ray $x=1$ is in $L_+$ but not in $L_-$. The graph $y=Y_+(x)$ touches the $x$-axis when $x=1$.
The graph $y=Y_-(x)$ touches the $x$-axis when $x=\frac1{\sqrt2}$.

\subsection{The moduli space $\mathfrak{Z}_+$}\label{S2.1}
Adopt the notation of Definition~\ref{D2.1}.
We may set $T_1(x^1,x^2):=(-x^1,x^2)$. Then $T_1^*\Gamma_+(x,y)=\Gamma_+(-x,y)$ so
$[\Gamma_+(x,y)]=[\Gamma_+(-x,y)]$. Similarly we may use $T_2(x^1,x^2)=(x^1,-x^2)$ to see
 $[\Gamma_+(x,y)]=[\Gamma_+(x,-y)]$. Consequently, 
 we can change the sign of $x$ and/or of $y$ as desired.
By Lemma~\ref{L1.3}, $[\Gamma_+(x,y)]=[\Gamma_+(\bar x,\bar y)]$
if and only if
$T^*\Gamma_+(x,y)=\Gamma_+(\bar x,\bar y)$ for some $T\in\operatorname{GL}(2,\mathbb{R})$.
Since the Ricci tensor is normalized to $\pm\operatorname{diag}(1,1)$, this implies $T\in O(2)$;
after replacing $x^1$ by $-x^1$ (or $x^2$ by $-x^2$)
if necessary, we may assume $T\in SO(2)$. This means that there
exists a rotation $T_\theta$ (see Equation~(\ref{E2.a})) so that 
$$T_\theta^*\{\Gamma_+(x,y)\}_{12}{}^1=0,\quad
    T_\theta^*\{\Gamma_+(x,y)\}_{12}{}^2=\pm x,\quad
    T_\theta^*\{\Gamma_+(x,y)\}_{22}{}^2=\pm y\,.
$$
We extend Equation~(\ref{E2.a}) to the conformal group. For $(u,v)\ne(0,0)$, define:
\begin{equation}\label{E2.f}
T_{u,v}(\partial_{x^1})=u\partial_{x^1}+v\partial_{x^2},\quad
 T_{u,v}(\partial_{x^2})=-v\partial_{x^1}+u\partial_{x^2}\,.
 \end{equation}
We recover $T_\theta$ by setting $\cos(\theta)=\frac {u}{\sqrt{u^2+v^2}}$ and
$\sin(\theta)=\frac{v}{\sqrt{u^2+v^2}}$.
We examine the equation $\{T^*\Gamma_+(x,y)\}_{12}{}^1=0$. This yields the
homogeneous cubic equation 
$$
v\{u^2(x^2-1)+u v x y-v^2 x^2\}=0\,.
$$
If $v=0$, then $T_{1,0}$ is the identity map.  Thus we may assume
$v\ne0$. Since we may replace $(u,v)$ by
$(-u,-v)$, we may assume $v>0$. We normalize
to assume $v=1$ and obtain the quadratic equation:
\begin{equation}\label{E2.g}
u^2(x^2-1)+uxy-x^2=0\,.
\end{equation}
Equation~(\ref{E2.g}) has at most 2 solutions $u_\pm(x,y)$; we shall
set $u_\pm=\text{DNE}$ if no solution
exists. Assume $x>0$ and set:
\begin{equation}\label{E2.i}
u_\pm(x,y):=\left\{\begin{array}{ll}
\text{DNE}&\text{if }x=1\text{ and }y=0\\
\text{DNE}&\text{if }x\ne1\text{ and }y^2+4(x^2-1)<0\\
\frac{-y}{2(x^2-1)}x&\text{if }y^2+4(x^2-1)=0\\
\frac xy&\text{if }x=1\text{ and }y>0\\
\frac{-y\pm\sqrt{y^2+4(x^2-1)}}{2(x^2-1)}x&\text{if }x\ne1\text{ and }y^2+4(x^2-1)>0\\\end{array}\right\}.\end{equation}
Assuming $u_\pm$ is well defined, the corresponding rotation angles $\theta_\pm$
and solutions $(x_\pm(x,y),y_\pm(x,y))$ to the equation $[\Gamma(x,y)]=[\Gamma(x_\pm,y_\pm)]$
are given by:
\begin{eqnarray*}
&&\textstyle\cos_\pm(x,y):=\frac{u_\pm(x,y)}{\sqrt{1+ u_\pm(x,y)^2}},\quad
\sin_\pm(x,y):=\frac1{\sqrt{1+u_\pm(x,y)^2}},\\
&&x_\pm(x,y):= T_{\cos_\pm(x,y),\sin_\pm(x,y)}^*\{\Gamma_+(x,y)\}_{22}{}^1,\\
&&y_\pm(x,y):= T_{\cos_\pm(x,y),\sin_\pm(x,y)}^*\{\Gamma_+(x,y)\}_{22}{}^2.
\end{eqnarray*}
Of course $x_\pm$ and $y_\pm$ can be negative, so we must take the absolute values.
This yields the following result:
\begin{lemma}
Let $(x,y)\in\mathcal{S}$ and $(\bar x,\bar y)\in\mathcal{S}$. Then
$[\Gamma_+(x,y)]=[\Gamma_+(\bar x,\bar y)]$ if and only if at least one of the following
3 conditions (which need not be exclusive) holds:
\begin{enumerate}
\item $(\bar x,\bar y)=(x,y)$.
\item $x_-(x,y)$ and $y_-(x,y)$ are defined and $(\bar x,\bar y)=(|x_-(x,y)|,|y_-(x,y)|)$.
\item $x_+(x,y)$ and $y_+(x,y)$ are defined and $(\bar x,\bar y)=(|x_+(x,y)|,|y_+(x,y)|)$.
\end{enumerate}\end{lemma}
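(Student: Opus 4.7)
The converse ($\Leftarrow$) direction is direct: case~(1) uses the identity transformation; for cases~(2) and~(3), the conformal map $T_{\cos_\pm(x,y),\sin_\pm(x,y)}$ from~(2.f) associated to $u_\pm(x,y)$ is constructed precisely so that $\{T^*\Gamma_+(x,y)\}_{12}{}^1$ vanishes, and composition with an appropriate reflection $T_1$ or $T_2$ converts the output parameters into $(|x_\pm|,|y_\pm|)$, yielding a representative in the normal form of Definition~\ref{D2.1}. The content is therefore the direct implication.

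For $(\Rightarrow)$, suppose $T\in\operatorname{GL}(2,\mathbb{R})$ realizes $T^*\Gamma_+(x,y)=\Gamma_+(\bar x,\bar y)$. Since both sides have Ricci tensor $+\operatorname{Id}$, Lemma~\ref{L1.3} forces $T\in O(2)$. Composing with $T_1$ or $T_2$ — whose sole effect is the absolute values appearing in the lemma's statement — reduces to $T\in SO(2)$, so $T=T_\theta$ for some rotation angle. By the proof of Lemma~\ref{L2.2}, once $\rho=+\operatorname{Id}$ is preserved and $\{T_\theta^*\Gamma_+(x,y)\}_{12}{}^1=0$ is imposed, the remaining entries are determined up to sign: $\Gamma_{11}{}^2=0$ follows from $\rho_{12}=0$, then $\Gamma_{22}{}^1=\Gamma_{12}{}^2$ and $\Gamma_{11}{}^1=\Gamma_{12}{}^2+1/\Gamma_{12}{}^2$ follow from the $\rho_{11}$ and $\rho_{22}$ normalizations. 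Hence the only equation to enumerate is $\{T_\theta^*\Gamma_+(x,y)\}_{12}{}^1=0$.

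Parametrize $T_\theta=T_{u,v}$ with $(u,v)$ proportional to $(\cos\theta,\sin\theta)$; the expression for $(T^*\Gamma)_{12}{}^1$ recorded in the proof of Lemma~\ref{L2.2} is a homogeneous cubic in $(u,v)$, and substituting the Christoffel symbols of $\Gamma_+(x,y)$ reduces it to
$$v\bigl(u^2(x^2-1)+uvxy-v^2x^2\bigr)=0.$$
The factor $v=0$ gives $T_{1,0}=\operatorname{Id}$, producing case~(1). For $v\ne 0$ we normalize $v=1$ (replacing $(u,v)$ by $(-u,-v)$ if needed) and obtain the quadratic~(2.g), whose roots are by definition $u_\pm(x,y)$ as tabulated in~(2.i). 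The resulting transformed parameters are exactly $(x_\pm(x,y),y_\pm(x,y))$, and absorbing the residual reflection freedom into absolute values produces cases~(2) and~(3). The main obstacle is purely bookkeeping in~(2.i): one must track separately the sign of the discriminant $y^2+4(x^2-1)$, the degeneration at $x=1$ where the quadratic becomes linear, and the exceptional point $(1,0)$ where no non-identity solution exists. Each is elementary algebra, but together they guarantee the enumeration of $u_\pm$ is exhaustive and the three cases of the lemma cover every possibility.
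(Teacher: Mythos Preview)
Your proof is correct and follows essentially the same approach as the paper: the lemma is stated immediately after the discussion that derives the cubic $v\{u^2(x^2-1)+uvxy-v^2x^2\}=0$, normalizes to the quadratic~(2.g), and defines $u_\pm$, $x_\pm$, $y_\pm$, with the paper writing ``This yields the following result'' rather than giving a separate proof. Your write-up recapitulates that discussion, adding the useful observation (implicit in the proof of Lemma~\ref{L2.2}) that once $\rho=+\operatorname{Id}$ and $\Gamma_{12}{}^1=0$ are imposed, the remaining Christoffel symbols are forced into the normal form of Definition~\ref{D2.1}.
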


This shows that the map $(x,y)\rightarrow[\Gamma_+(x,y)]$
is at most a $3\rightarrow1$ map from $\mathcal{S}$ to $\mathfrak{Z}_+$. 
If $(x,y)\in\mathcal{S}$, let
$1\le n_+(x,y)\le3$ be the number of distinct points $(\bar x,\bar y)\in\mathcal{S}$
which satisfy $[\Gamma_+(x,y)]=[\Gamma_+(\bar x,\bar y)]$. 
To determine $n_+(x,y)$, we examine the 5 possibilities
of Equation~(\ref{E2.i}).
\subsection*{Case 2.1.1. Let $(x,y)=(1,0)$}
 As
Equation~(\ref{E2.g}) has no solutions,
$n_+(x,y)=1$.
\subsection*{Case 2.1.2.  Let $y^2+4(x^2-1)<0$} As
Equation~(\ref{E2.g})
has no solutions, $n_+(x,y)=1$.
\subsection*{Case 2.1.3. Let $y^2+4(x^2-1)=0$ and $0<x<1$}
Let
\begin{eqnarray*}
&&\mathcal{D}:=\{(x,y)\in\mathcal{S}:y^2+4(x^2-1)=0\},\\
&&\mathcal{R}:=\{(x,y)\in\mathcal{S}:x=1\text{ and }y>0\}
\end{eqnarray*}
be the {\it discriminant locus} and a ray in $\mathcal{S}$, respectively. We define $\mathcal{D}$
and $\mathcal{R}$ by setting, respectively:
$P_{\mathcal{D}}(t):=(t,2\sqrt{1-t^2})$ and $P_{\mathcal{R}}(t):=(1,\frac{\sqrt{1-t^2}}t)$
for $t\in(0,1)$.
If $(x,y)=P_{\mathcal{D}}(t)\in\mathcal{D}$, then there is only one solution
$(x_\pm,y_\pm)=(1,-\frac{\sqrt{1-t^2}}t)$.
Thus after changing the sign of $y_\pm$, we see  
$[\Gamma_+(P_{\mathcal{D}}(t))]=[\Gamma_+(P_{\mathcal{R}}(t))]$ and
$n=2$ on $\mathcal{D}$ and on $\mathcal{R}$.

\subsection*{Case 2.1.4. Let $(x,y)\in\mathcal{R}$} We use the discussion of Case 3 to
see $n_+(x,y)=2$ and $[\Gamma(x,y)]=[\Gamma(\bar x,\bar y)]$ for exactly one
point $(\bar x,\bar y)\in\mathcal{D}$.

\subsection*{Case 2.1.5. Suppose that $y=0$ and $x>1$} We compute
$$\begin{array}{ll}
(x_+,y_+)=(\frac1{\sqrt{2x^2-1}},-\frac{(1+2x^2)\sqrt{x^2-1}}{x\sqrt{2x^2-1}}),\\
(x_-,y_-)=(-\frac1{\sqrt{2x^2-1}},-\frac{(1+2x^2)\sqrt{x^2-1}}{x\sqrt{2x^2-1}}).
\end{array}$$
Since $x_-=-x_+$ and $y_+=y_-$, $n_+(x,y)=2$. Since $x>1$, $x_+<1$. Furthermore,
one may verify that $(x_+,y_+)$ is on the Jacobi locus $L_+$. The map $x\rightarrow(x_+,y_+)$
parametrizes the Jacobi locus.

Suppose $(x,y)$ is on the Jacobi locus. The discussion above shows $n_+(x,y)=2$
since there exists a unique point $(\bar x,0)$ with $\bar x>1$ so
$[\Gamma_+(x,y)]=[\Gamma_+(\bar x,0)]$. 

The complement in $\mathcal{S}$
of the discriminant locus $\mathcal{D}$, the ray $\mathcal{R}$, and the
 Jacobi locus  $L_+$ consists of 4 open disjoint regions pictured below in Fig.~2.3.
\smallbreak\centerline{
\includegraphics[width=4cm,keepaspectratio=true]{Fig2-2.jpeg}\qquad
\includegraphics[width=4cm,keepaspectratio=true]{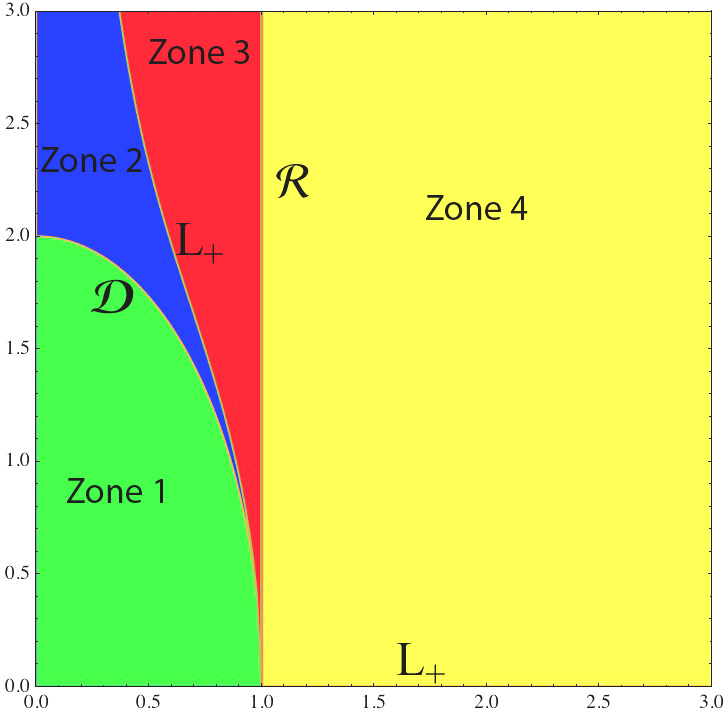}}
\smallbreak\hfill Fig.~2.2 $\Uparrow$\hfill Fig.~2.3 $\Uparrow$\hfill\vphantom{.}
\smallbreak\noindent
Zone 1 is in green, Zone 2 is in blue, Zone 3 is in red, and Zone 4 is in yellow. The boundary
between Zone 1 and Zone 2 is $\mathcal{D}$, the boundary between
Zone 2 and Zone 3 is $L_+$, and the boundary between Zone 3 and
Zone 4 is $\mathcal{R}$.  If $(x,y)\in\mathcal{S}$ and if $(\bar x,\bar y)\in\mathcal{S}$,
then we say that $(x,y)\sim(\bar x,\bar y)$ if and only if
$[\Gamma_+(x,y)]=[\Gamma_+(\bar x,\bar y)]$. 
Let $\mathcal{O}$ be the union of Zone 2, Zone 3, and Zone 4.
The analysis above shows that if
$(x,y)\in\mathcal{O}$ and if $(x,y)\sim(\bar x,\bar y)$, then $(\bar x,\bar y)\in\mathcal{O}$
as well.

We examine a specific example:
$$\begin{array}{rrr}
(x,y)&(x_+(x,y),y_+(x,y))&(x_-(x,y),y_-(x,y))\\
(0.1, 2.2),&(0.636041, -10.0394),&( 1.56535, -9.72988),\\
(0.636041, 10.0394),&(0.1, -2.2), &(1.56535, 9.72988),\\
(1.56535, 9.72988),&(0.1, -2.2),& (-0.636041, -10.0394).
\end{array}$$
We adjust the signs and define $Q^2_\pm(x,y):=(x_\pm(x,y),-y_\pm(x,y))$ on Zone 2.
The analysis performed above shows that $Q^2_\pm$ can not cross zone boundaries.
Since $Q^2_+(0.1,2.2)$ takes values in Zone 3 and $Q^2_-(0.1,2.2)$ takes values in
Zone 4, the same is true on all of Zone 2. Thus $n_+(x,y)=3$ on Zone 2. A similar analysis
pertains on Zone 3 and Zone 4. 

Let Equation~(\ref{E2.d}) define $Y_+$.
Let $\mathcal{C}$ be the closure of Zone 1 and Zone 2:
$$\mathcal{C}:=\{(x,y):0\le x\le 1\text{ and }0\le y\le Y_+(x)\}\,.$$
The following is now immediate from our discussion:

\begin{lemma}\label{L2.4} Let $[\Gamma]\in\mathfrak{Z}_+$. There is a unique
$(x,y)\in\mathcal{C}$ so $[\Gamma_+(x,y)]=[\Gamma]$.
\end{lemma}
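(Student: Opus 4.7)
The plan is to deduce Lemma~\ref{L2.4} directly from the zone analysis carried out in Cases~2.1.1 through~2.1.5, using the equivalences produced by $Q^2_\pm$ as the sole tool for moving between representatives. By Lemma~\ref{L2.2}, every class $[\Gamma]\in\mathfrak{Z}_+$ admits at least one representative $\Gamma_+(x,y)$ with $(x,y)\in\mathcal{S}$, so it suffices to check that each orbit of the equivalence $\sim$ meets $\mathcal{C}$ in exactly one point.

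For \textbf{existence}, I would organize the argument by which of Zones $1,2,3,4$ the initial representative lies in. A representative in Zone~1 or Zone~2 is automatically in $\mathcal{C}$. For a representative in Zone~3 or Zone~4, the relations $Q^2_\pm$ from Case~2.1.5 move it; because $Q^2_\pm$ is continuous on each zone and cannot cross any of the boundary curves $\mathcal{D}$, $L_+$, or $\mathcal{R}$ (these are precisely where solutions to \eqref{E2.g} coalesce or disappear), the two orbit partners of any point in Zone~$k$ ($k=2,3,4$) are distributed one in each of the other two zones in $\mathcal{O}$; one of them therefore lies in Zone~2, hence in $\mathcal{C}$. For initial representatives on the boundary curves one reads off the partner from \eqref{E2.i}: a point of $\mathcal{D}$ is equivalent to its image in $\mathcal{R}$ (Case~2.1.3), a point of $L_+$ is equivalent to a point on the positive $x$-axis with $x>1$ (Case~2.1.5), and the axes $y=0$ and $x=0$ fall under similar explicit formulas. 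In each case one of the representatives is already in $\mathcal{C}$.

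For \textbf{uniqueness}, I would simply catalog the intersection of each orbit with $\mathcal{C}$:
\begin{itemize}
\item Interior of Zone~1: $n_+=1$, trivially unique.
\item Interior of Zone~2: $n_+=3$, but the specific example computed in Case~2.1.5 combined with the impossibility of zone crossings shows the two other representatives lie in the interiors of Zone~3 and Zone~4, both disjoint from $\mathcal{C}$.
\item On $\mathcal{D}$ (the lower boundary of Zone~2 inside $\mathcal{C}$): $n_+=2$ and the partner lies on $\mathcal{R}$, which is disjoint from $\mathcal{C}$.
\item On the arc $y=Y_+(x)$, $0<x\le 1$ (the upper boundary of Zone~2 inside $\mathcal{C}$): $n_+=2$ and the partner lies on $\{y=0,\,x>1\}$, again disjoint from $\mathcal{C}$.
\item Corner and axis points: $(1,0)$ has $n_+=1$ by Case~2.1.1; points with $x=0$ or the axis segments inside $\mathcal{C}$ are handled by direct inspection of \eqref{E2.i}.
\end{itemize}
Combining these cases shows that no two distinct points of $\mathcal{C}$ are $\sim$-equivalent.

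The \textbf{main obstacle} is the bookkeeping at the boundaries of $\mathcal{C}$: one must be sure that the partners produced by $Q^2_\pm$ on $\mathcal{D}$ and on $L_+\cap\mathcal{C}$ really do land outside $\mathcal{C}$, and that no other hidden identifications occur on the axes. This is pinned down by two ingredients: the explicit five-case formula \eqref{E2.i} for $u_\pm$, which governs what happens on $\mathcal{D}$, $\mathcal{R}$, and $\{y=0,\,x\ne 1\}$, and the continuity/``no zone-crossing'' principle, which transports the explicit identification of Case~2.1.5 at $y=0$ to all of Zone~2. Once these two observations are in place, the lemma is immediate.
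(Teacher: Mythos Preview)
Your proposal is correct and mirrors the paper's approach exactly; indeed the paper offers no separate proof for Lemma~\ref{L2.4}, stating only that it ``is now immediate from our discussion,'' and your write-up is precisely that discussion organized into existence and uniqueness. One cosmetic slip: there are no points with $x=0$ in $\mathcal{S}=(0,\infty)\times[0,\infty)$, so that phrase can simply be dropped, and the remaining axis segment $\{(x,0):0<x\le1\}$ is covered by Cases~2.1.1 and~2.1.2 as you indicate.
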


\begin{proof}[The proof of Theorem~\ref{T1.5} if $\rho>0$]We now turn to the question of defining invariants which completely detect
$\mathfrak{Z}_+$. We change variables slightly to define
\begin{eqnarray*}
&&\Omega_+(x,y):=(\psi_3(\Gamma_+(x,y)),4\psi_3(\Gamma_+(x,y))-\Psi_3(\Gamma_+(x,y))-18)\\
&&\qquad=\left(2+x^{-2}+4x^2+y^2, -x^{-2}(x^2-1)^2(-4+4x^2+y^2)\right)\,.
\end{eqnarray*}
Let $(x,y)\equiv(\bar x,\bar y)$ if and only if
$\Omega_+(x,y)=\Omega_+(\bar x,\bar y)$.
To complete the proof of Theorem~\ref{T1.5} for $\mathfrak{Z}_+$, we must show
that if $(x,y)\in \mathcal{C}$, $(\bar x,\bar y)\in \mathcal{C}$, and $(x,y)\equiv(\bar x,\bar y)$ then $(x,y)=(\bar x,\bar y)$.
We observe that
the curves $y\rightarrow\Omega_+(x,y)$ for fixed $x$ are rays. We introduce three pictures
\smallbreak\centerline{\includegraphics[width=3.5cm,keepaspectratio=true]{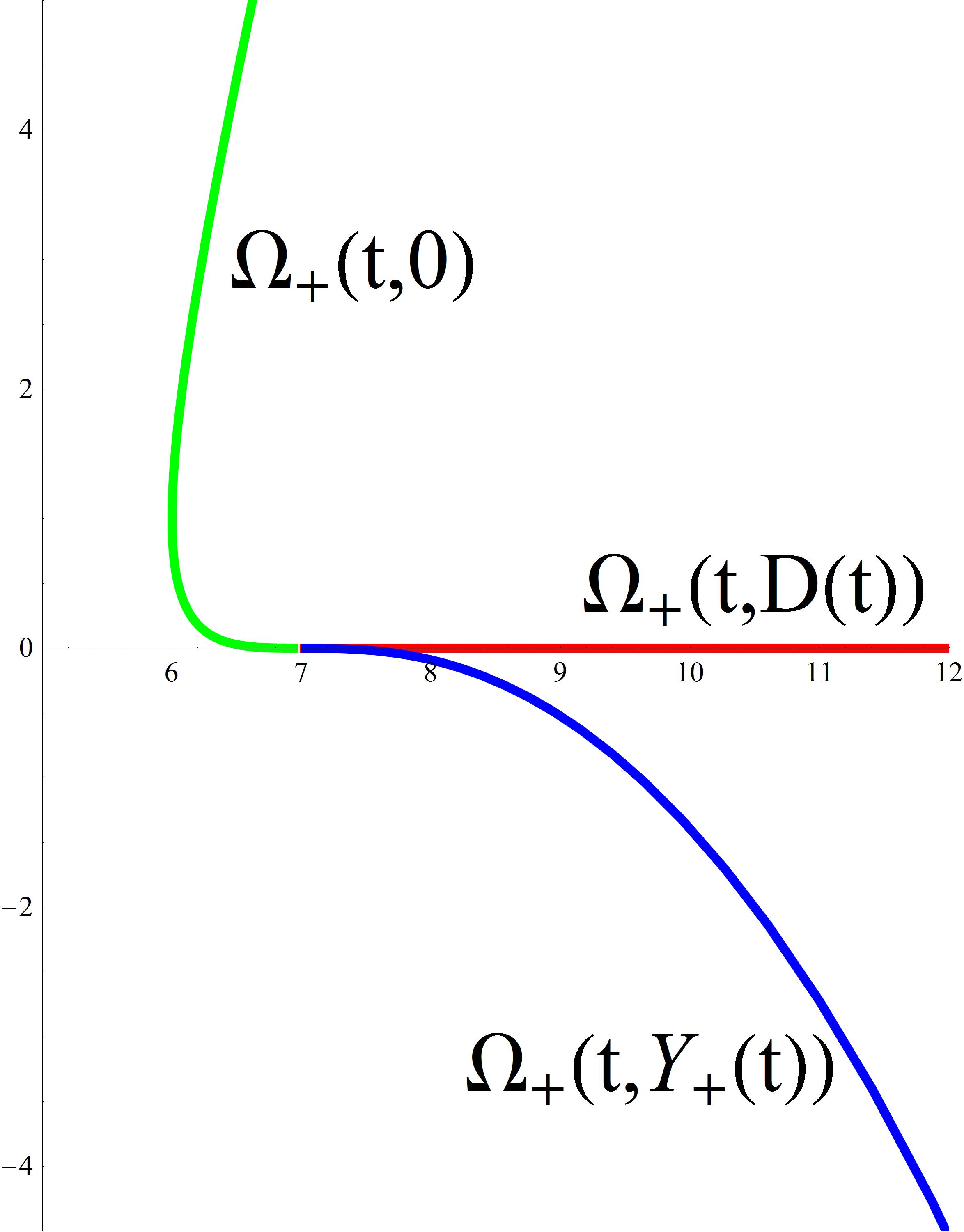}
\qquad\includegraphics[width=3.5cm,keepaspectratio=true]{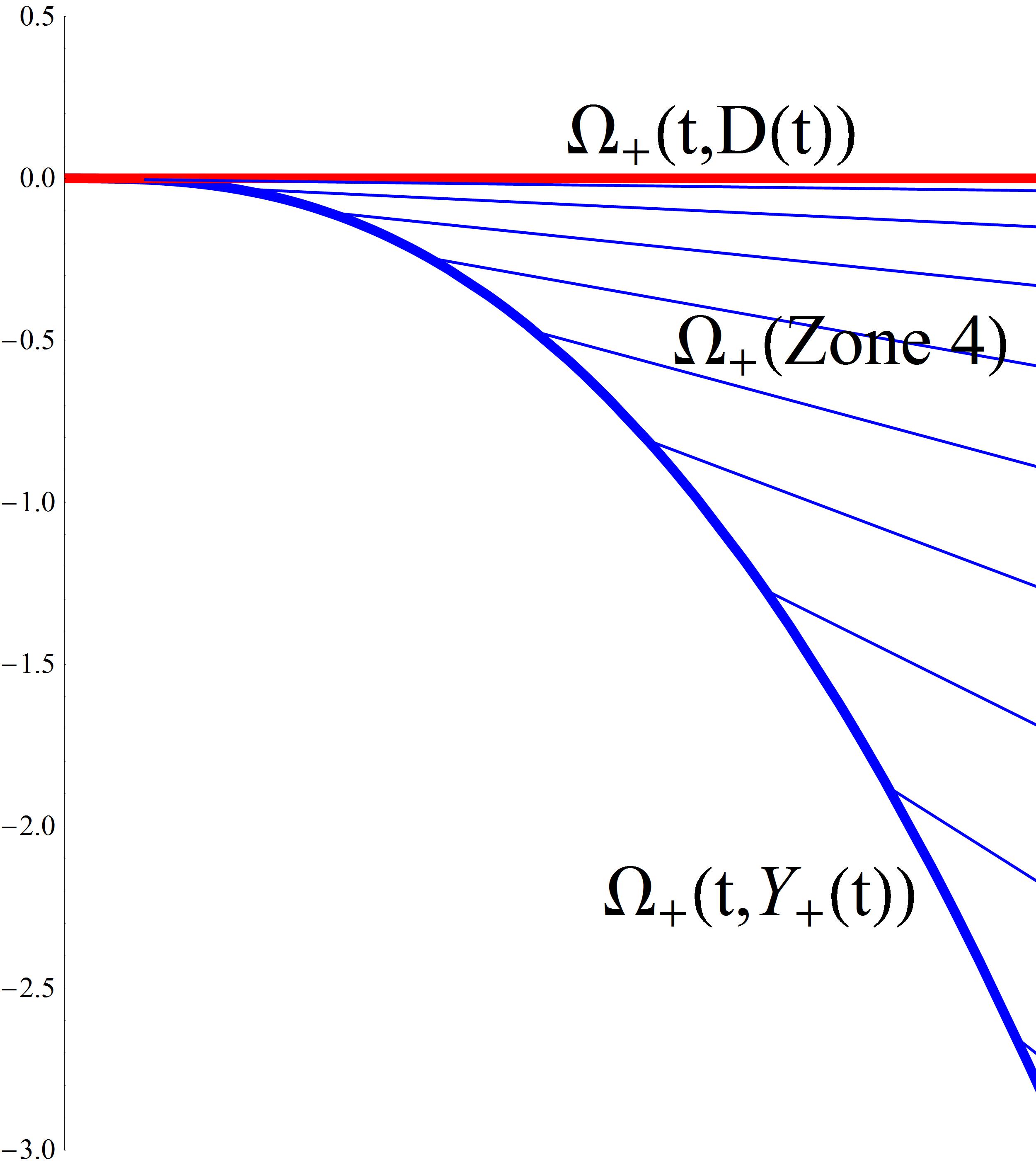}\qquad
\includegraphics[width=3.5cm,keepaspectratio=true]{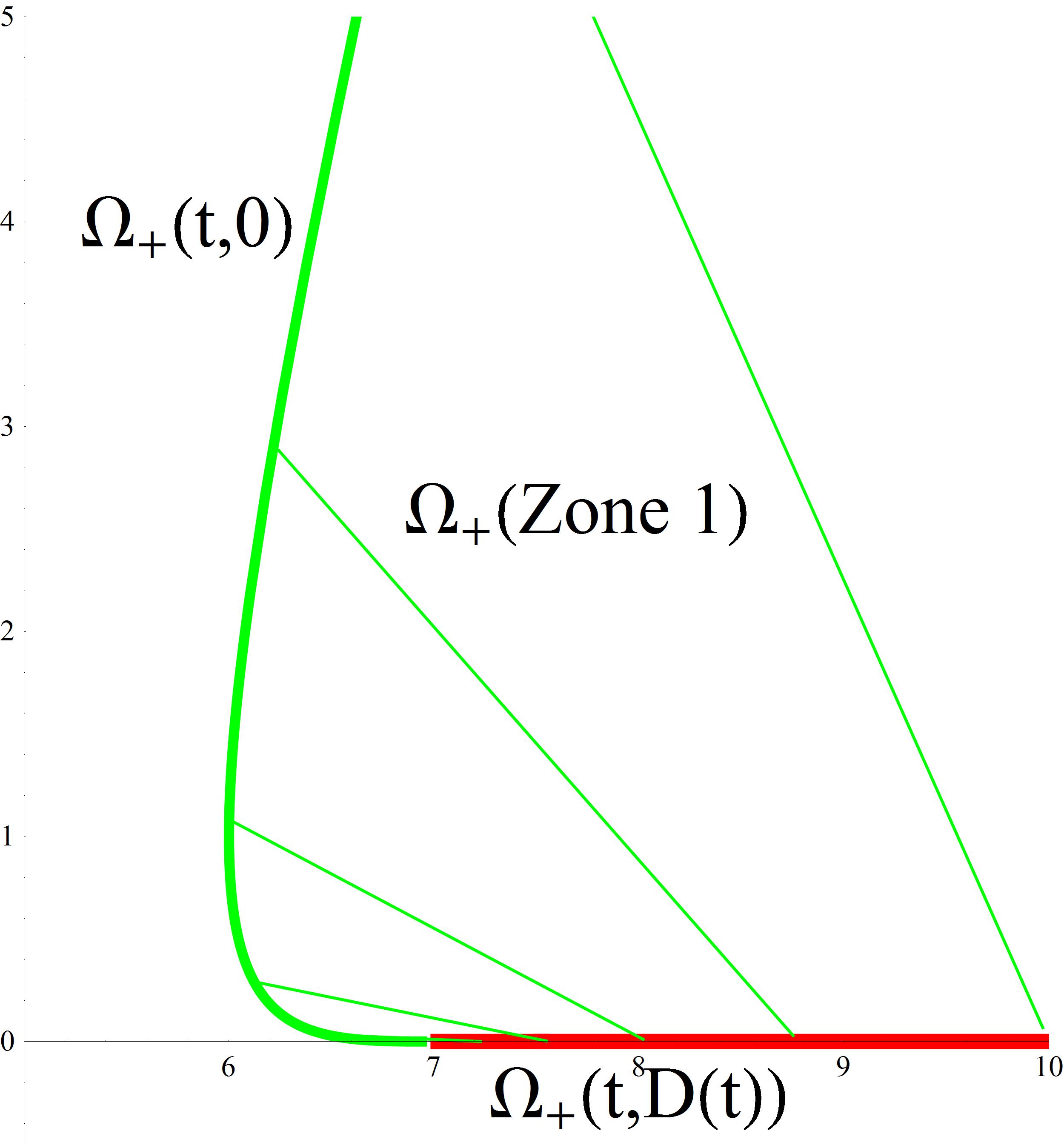}
}
\par\noindent
\hfill Fig.~2.4 $\Uparrow$\qquad\hfill Fig.~2.5 $\Uparrow$\hfill\qquad Fig.~2.6 $\Uparrow$\hfill\vphantom{.}
\smallbreak\noindent
Fig.~2.4 gives the boundary of the image of $\mathcal{C}$. The upper curve in the first quadrant is the
image of the Jacobi locus, the image of the discriminant locus
lies along the horizontal axis, and the lower curve in the fourth quadrant is the image of the $x$-axis
for $0<x<1$.
\begin{eqnarray*}
&&\Omega_+(t,0)=(4 t^2+t^{-2}+2,4t^{-2}(1-t^2)^3),\\
&&\Omega_+(t,D(t))=(t^{-2}+6,0),\\
&&\textstyle\Omega_+(t,Y_+(t))=(6+2(t^2+t^4)^{-1},(t^4+t^6)^{-1}(t^2-1)^3)\,.
\end{eqnarray*}
It is immediate that $\Omega_+(t,0)$ lies in the 1st quadrant,
 $\Omega_+(t,D(t))$ lies along the horizontal axis, and $\Omega_+(t,Y_+(t))$
 lies in the 4th quadrant.
 If $(x,y)\in C$, we may represent $(x,y)=(t,s\cdot Y_+(t))$ for some $s\in[0,1]$.
The curve $s\rightarrow\Omega_+(t,s\cdot Y_+(t))$ for fixed $t$ and $s\in[0,1]$ is a straight line which runs from
the upper curve in the first quadrant down to the corresponding point horizontal axis and then down to the 
lower curve. The singular point $(1,0)$
corresponds to the point $(7,0)$ on the horizontal axis where the upper curve intersects the lower curve.
If $y$ lies in the closure of Zone 1,
then $\Omega_+(x,y)$ lies in the 4th quadrant; if $y$ lies in the closure of Zone 2, then
$\Omega_+(x,y)$ lies in the 1st quadrant. Suppose $(x,y)\equiv(\bar x,\bar y)$ for $0< x \leq 1$
and $0\le y\le Y_+(x)$. Our analysis shows that if $x=1$ then $(x,y)=(\bar x,\bar y)=(1,0)$.
Furthermore, $(x,y)$ belongs to the closure of Zone 1 (resp. Zone 2) if and only if $(\bar x,\bar y)$
belongs to the closure of Zone 1 (resp. Zone 2). We may therefore examine the two cases seriatim.

Instead of examining the image of Zone 2, we may equivalently examine the image of Zone 4.
This is pictured in Fig.~2.5. The upper boundary is the $x$ axis; this is the curve $\Omega_+(1,y)$ for $y\ge0$.
The lower boundary is the curve $\gamma(x):=\Omega_+(x,0)$ where $x\ge1$. The  straight lines
are
$\Omega(x_n,y)$ for $y\ge1$ and suitably chosen $x_n$. 
Our task is to show these do not intersect. 
We must show the first coordinate of $\gamma$ increases monotonically, the
second coordinate of $\gamma$ decreases monotonically, the slope of  $\gamma$
decreases monotonically, and the slope of the lines leaving $\gamma$ decrease monotonically
and are always strictly greater than the slope of $\gamma$.
Let $\gamma=(\varrho_1,\varrho_2)$. We have
$\gamma(x)=(2+x^{-2}+4x^2,4x^{-2}(1-x^2)^3)$. As desired, the first coordinate
increases monotonically and the second coordinate decreases monotonically.
One computes that
\begin{eqnarray*}
&&\text{Slope of }\gamma
=\left.\frac{\partial_x\varrho_2(x,y)}{\partial_x\varrho_1(x,y)}\right|_{y=0}= -\frac{4 \left(x^2-1\right)^2}{2 x^2-1},\\
&&\text{Slope of line from }\gamma
=\left.\frac{\partial_y\varrho_2(x,y)}{\partial_y\varrho_1(x,y)}\right|_{y=0}=-\frac{(x^2-1)^2}{x^2}\,.
\end{eqnarray*}
These are monotonically decreasing for $x\geq 1$, negative, and as desired 
the slope of $\gamma$ is more negative than the slope of the line from $\gamma$. This
completes our analysis of Zone 4.

We now turn our analysis to Zone 1 and refer to Fig.~2.6. 
The lower boundary is the $x$ axis; 
this is the curve $\Omega_+(x,2\sqrt{1-x^2})$ and the upper boundary is the curve 
$\Omega_+(x,0)$. The straight lines are the lines $(x_n,y)$ 
for suitable values of $x_n$
where $0\leq y\leq 2\sqrt{1-x_n^2}$.
And if the picture is to be believed, the full rays do not intersect. 
Let $\tau(x)=2\sqrt{1-x^2}$ so that $(x,\tau(x))$ parametrizes the discriminant locus. We have
$(\varrho_1,\varrho_2)=\Omega_+(x,\tau(x))=(6+x^{-2},0)$ and
$\frac{\partial_y\varrho_1(x,y)}{\partial_y\varrho_2(x,y)}|_{y=\tau(x)}=-\frac{(-1+x^2)^2}{x^2}$.
Thus as $x$ increases from $0$ to $1$, $\varrho_1$ monotonically 
decreases from $\infty$ to $7$ 
and the slope of the straight lines
monotonically increases from $-\infty$ to $0$. Thus the picture really is as drawn
and the rays do
not intersect. This completes the proof of Theorem~\ref{T1.5} in the positive definite setting.\end{proof}
\subsection{The moduli space $\mathfrak{Z}_-$}\label{S2.2}
We suppose that $\rho$ is negative definite and recall our parametrization:
$$
\Gamma_-(x,y):=\{
\Gamma_{11}{}^1=\textstyle x-\frac1x,
\Gamma_{11}{}^2=0,\Gamma_{12}{}^1=0,\Gamma_{12}{}^2=x,
\Gamma_{22}{}^1=x,\Gamma_{22}{}^2=y\}\,.
$$
We recall some notation
established previously and set
\medbreak\qquad
$p_-(x,y):=\psi_3(\Gamma_-(x,y))=2-4 x^2-\frac{1}{x^2}-y^2$,
\smallbreak\qquad
$P_-(x,y):=\Psi_3(\Gamma_-(x,y))=4 x^4+x^2 \left(y^2-4\right)+\frac{y^2}{x^2}+2(- y^2+1)$,
\smallbreak\qquad
$J_-(x,y):=\det(\Theta_{-}^\prime)=\frac{4 \left(x^2+1\right) y \left(4 x^6+x^4 y^2-x^2 \left(y^2+3\right)+1\right)}{x^5}$.
\smallbreak\centerline{\includegraphics[width=4cm,keepaspectratio=true]{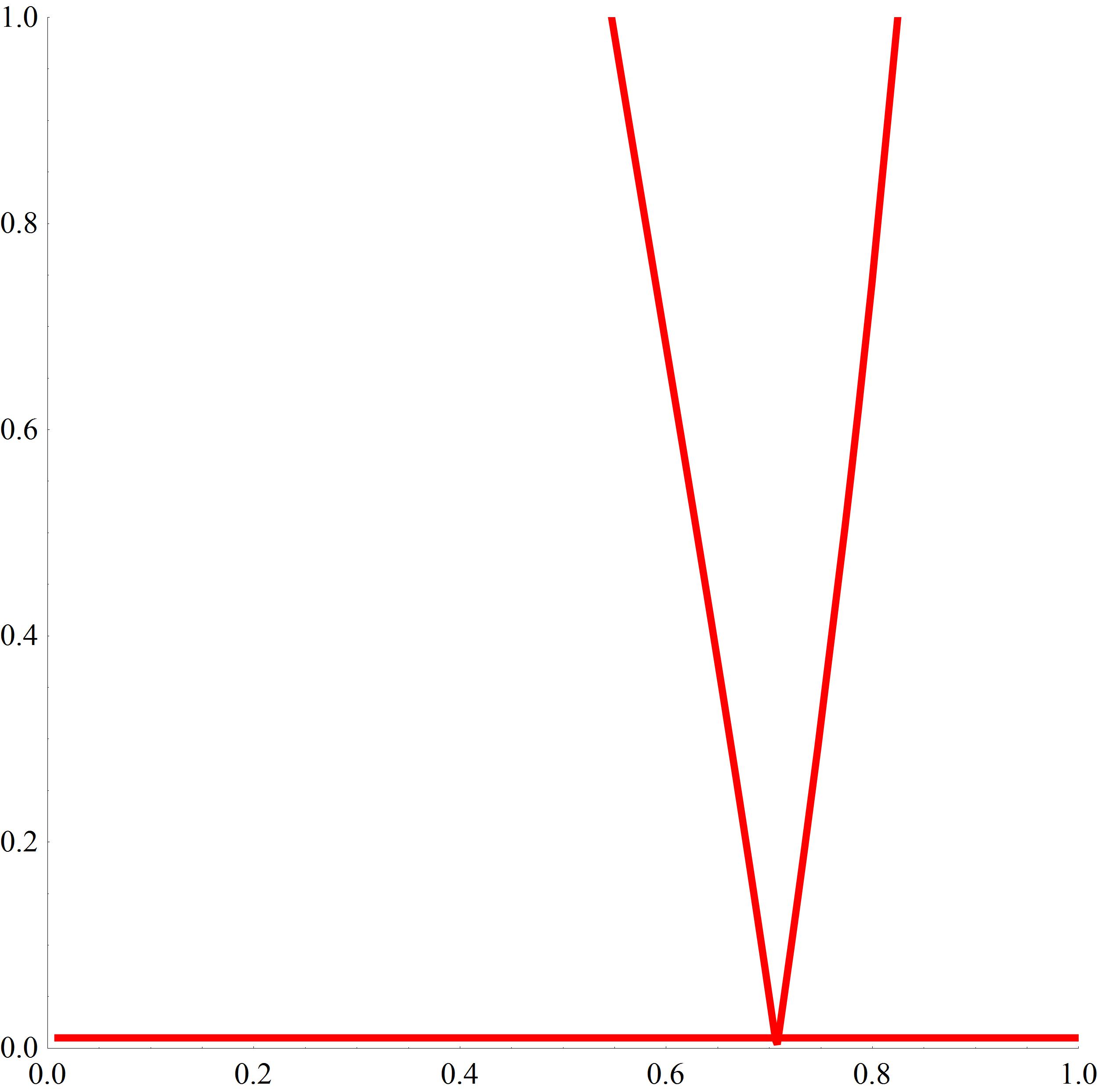}\qquad
\quad\includegraphics[width=4cm,keepaspectratio=true]{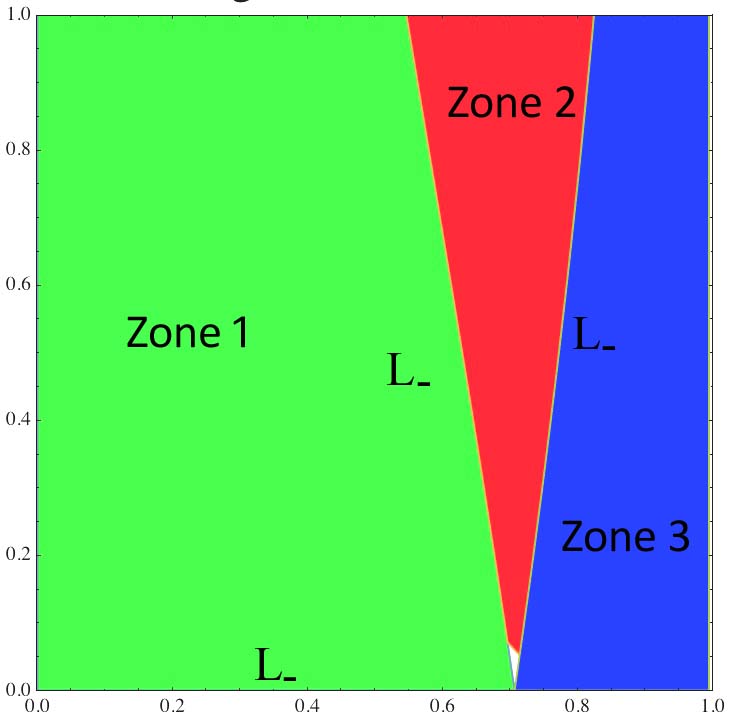}}
\par\noindent
\hfill Fig.~2.7 $\Uparrow$\hfill Fig.~2.8 $\Uparrow$\hfill \vphantom{.}
\smallbreak\noindent
The solutions of $J_-(x,y)=0$ are the Jacobi-locus. This includes the  ray $y=0$ and the graph $(x,Y_-(x))$
pictured in Figure~2.7 with a corner at $(\frac1{\sqrt2},0)$ where 
$$y=Y_-(x)=\sqrt{\frac{1 + 4 x^6 - 3 x^2}{x^2 - x^4}}\text{ for }0<x<1\,.$$
The Jacobi locus divides $\mathcal{S}$ into 3 zones. Zone 1 is in green; it lies below
the Jacobi locus for $x<\sqrt 2^{-1}$. Zone 2 is in red; it lies above the Jacobi locus.
Zone 3 is the remainder. It lies below the Jacobi locus for $\sqrt 2^{-1}\le x<1$. It also contains
the half-plane $x\ge1$.

We use Equation~(\ref{E2.f})
to define $T_{u,v}$. Setting $\{T_{u,v}^*\Gamma\}_{12}{}^1=0$ yields the following cubic equation which is analogous to Equation~(\ref{E2.g}) in the positive definite setting:
$$(1+x^2)u^2+uxy-x^2=0\,.$$
There is no discriminant locus and we obtain two solutions
$$
u_\pm=x\frac{-y\pm\sqrt{4(1+x^2)+y^2}}{2(1+x^2)}\,,
$$
which we use to define $x_\pm$ and $y_\pm$ as before. We now let $n_-(x,y)$ be the
number of distinct points $(\bar x,\bar y)$ so $[\Gamma_-(x,y)]=[\Gamma_+(\bar x,\bar y)]$.
We analyze cases as follows.
\subsection*{Case 2.2.1. We examine the singular point $(\frac1{\sqrt2},0)$}
This is where
the Jacobi locus intersects the horizontal axis in a corner. We show $n_-(\frac1{\sqrt2},0)=1$ by computing:
\begin{eqnarray*}
&&\textstyle(x_+(\frac1{\sqrt2},0),y_+(\frac1{\sqrt2},0))=(-\frac1{\sqrt2},0),\\
&&\textstyle(x_-(\frac1{\sqrt2},0),y_-(\frac1{\sqrt2},0))=(+\frac1{\sqrt2},0)\,.
\end{eqnarray*}

\subsection*{Case 2.2.2. Suppose that $(x,y)=(x,0)$ lies on the horizontal axis but $x\ne \frac1{\sqrt2}$}
We show $(x_\pm(x,0),y_\pm(x,0))$ lies on the Jacobi locus as follows.
If $x=\sqrt t$, then:
\begin{eqnarray*}
&&x_\pm^2(\sqrt t,0)=\frac1{1+2t},\quad
y_\pm^2(\sqrt t,0)=\frac{1-3t+4t^3}{t+2t^2},\\
&&4x_\pm^6+x_\pm^4y_\pm^2-x_\pm^2(y_\pm^2+3)+1=0.
\end{eqnarray*}
Consequently, $n(x,y)=2$ on the horizontal axis minus $(\frac1{\sqrt2},0)$ and $n(x,y)=2$ on
the Jacobi locus minus $(\frac1{\sqrt2},0)$; every point on the horizontal axis for $x<\frac1{\sqrt2}$
corresponds to a point of the Jacobi locus $(\bar x,\bar y)$ for $\bar x>\frac1{\sqrt2}$ while every
point on the horizontal axis for $x >\frac1{\sqrt2}$ corresponds to a point on the Jacobi locus $(\bar x,\bar y)$
for $\bar x<\frac1{\sqrt2}$; the two regions are reversed.
\subsection*{Case 2.2.3.}
The discussion above shows that no point in the interior of Zone 1, Zone 2, or Zone 3
is equivalent to a point on the Jacobi locus or on the horizontal axis in the moduli space. We compute:
$$\begin{array}{rrrrrr}
(x,y)&(x_+(x,y),y_+(x,y))&(x_-(x,y),y_-(x,y))\\
(1/\sqrt2, 100),&( -0.00999825, 0.706983),&( 1.4139,-99.9775),\\
(0.00999825, 0.706983),&( -0.707107, 100.),&( 1.4139, 99.9775),\\
(1.4139, 99.9775),&( -0.00999825, -0.706983),&( 0.707107, -100.).
\end{array}$$
We now conclude every point in Zone 1 is equivalent to a point in Zone 2 and to a point in Zone 3;  so
$n(x,y)=3$ for these points.

We now examine the image of Zone 2. It is convenient to decompose Zone 2 into two parts
where Zone 2-a lies above the Jacobi locus for $0<x\le\sqrt 2^{-1}$ and Zone~2-b lies above
the Jacobi locus for $\sqrt 2^{-1}\le x<1$. Again, we renormalize setting
\begin{eqnarray*}
&&\Omega_-(x,y):=(-2-\psi_3(\Gamma_-(x,y)),2\Psi_3(\Gamma_-(x,y))+\psi_3(\Gamma_+(x,y))\\
&&\qquad=\left(4 x^2+x^{-2}+y^2-4,x^{-2}(2 x^2-1) \left(4 x^4+x^2 \left(y^2-4\right)-2 y^2+1\right) \right)\,.
\end{eqnarray*}
\centerline{
\includegraphics[height=4cm,keepaspectratio=true]{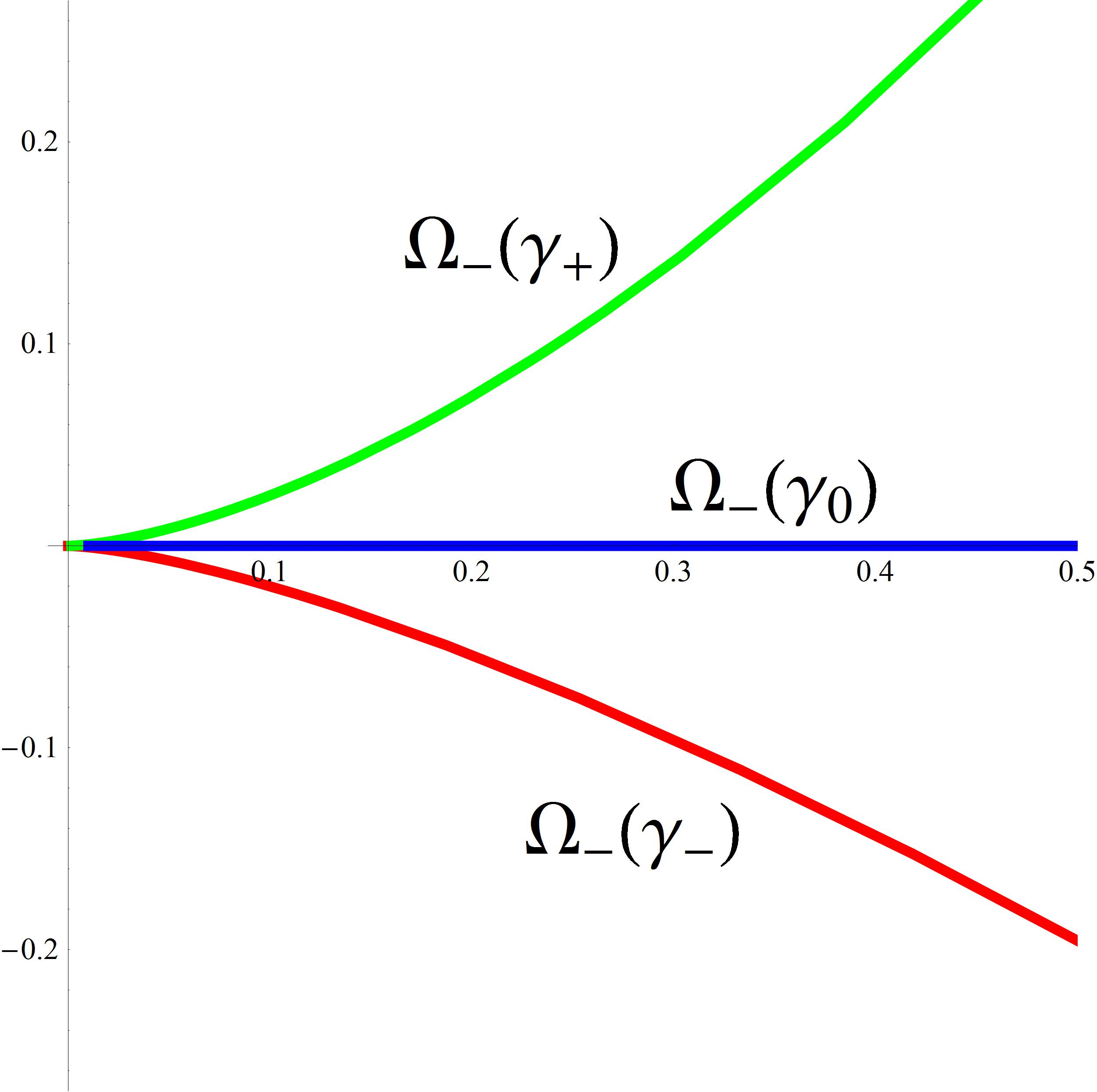}\quad
\includegraphics[height=4cm,keepaspectratio=true]{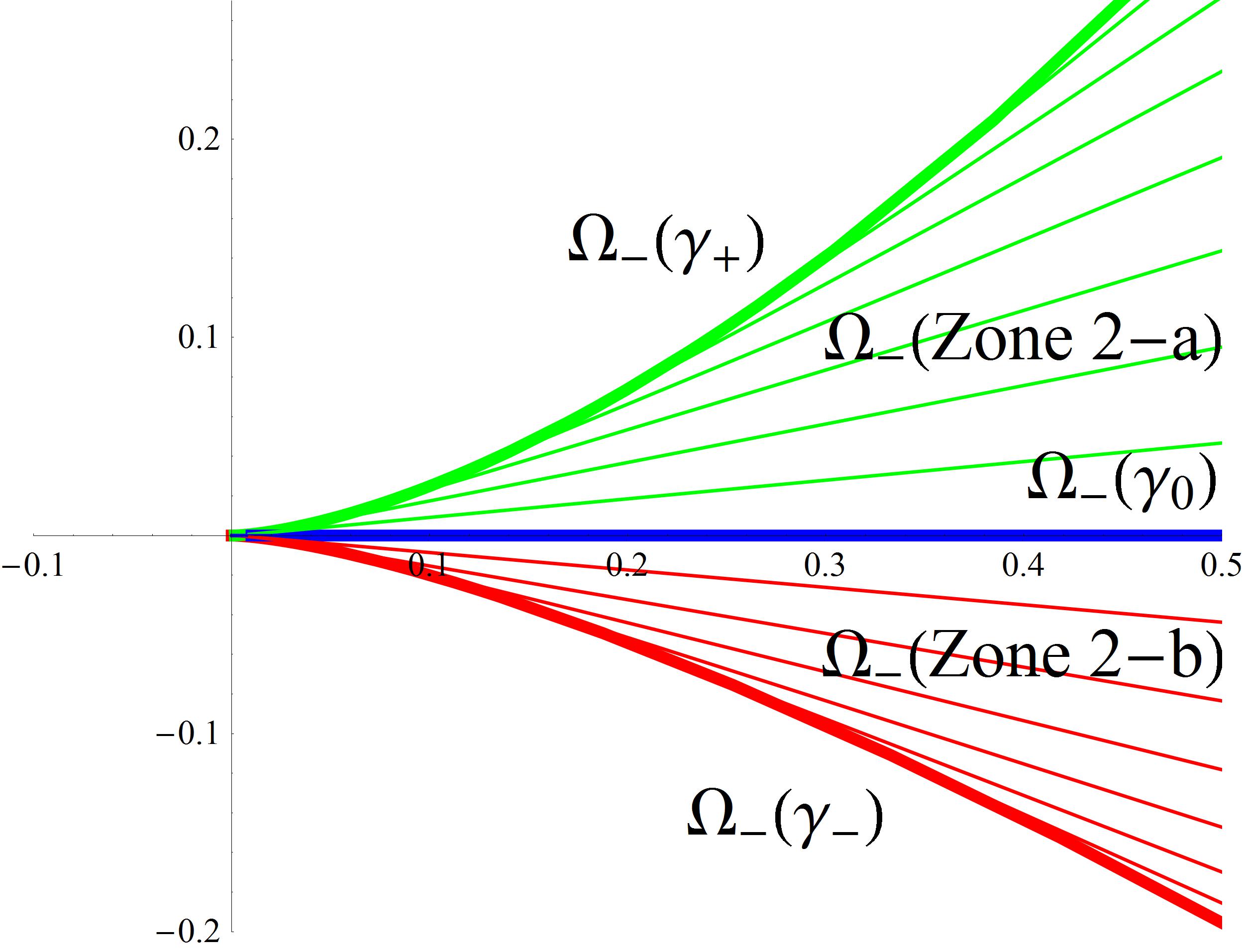}}
\par\hfill Fig.~2.9 $\Uparrow$\hfill Fig.~2.10 $\Uparrow$\hfill\vphantom{.}
\smallbreak\noindent

The $x$-axis is also part of the Jacobi locus in this instance.
Figure~2.9 shows the image of the graph of $Y_-$ of the Jacobi locus and the line $x=\sqrt{2}^{-1}$. The upper
curve $\gamma_+$ is the image of the graph of $Y_-$ of the Jacobi locus for $0<x<\sqrt{2}^{-1}$,
the curve lying along the horizontal axis $\gamma_0$ is the image of the ray $(\sqrt{2}^{-1},t)$,
and the lower curve $\gamma_-$ is the image of the graph of $Y_-$ of the Jacobi locus for $\sqrt{2}^{-1}<x<1$. 
More explicitly,
\begin{eqnarray*}
&&\gamma_+(t)=2\{t^2(1-t^2)\}^{-1}((1-2t^2)^2, \frac{(1-2t^2)^3}{t^2})\text{ for }0<t\le\sqrt2^{-1},\\
&&\gamma_0(t)=(t^2,0), \text{ for } 0\leq t,\\
&&\gamma_-(t)=2\{t^2(1-t^2)\}^{-1}((1-2t^2)^2, \frac{(1-2t^2)^3}{t^2})\text{ for }\sqrt2^{-1}\le t<1\,.
\end{eqnarray*}
Thus the picture actually is as drawn. Fig.~2.10 includes the lines
$\Omega(x_n,Y_+(x_n)+s)$ for suitably chosen values of $x_n$ and $s\in[0,\infty)$. This will
complete the proof if the picture can be believed as this forms 
a pencil of rays that do not intersect.
Since we are on the Jacobi locus, the rank of the Jacobian of $\Omega_-$ is $1$. Let $F_-(t,s):=\Omega_-(t,Y_-(t)+s)=(\varrho_1(t,s),\varrho_2(t,s))$. 
We compute
$$
\frac{\partial_t\varrho_2}{\partial_t\varrho_1}|_{s=0}=
\frac{\partial_s\varrho_2}{\partial_s\varrho_1}|_{s=0}=-5+2t^{-2}+2t^2\,.
$$
As $t$ ranges from $0$ to $\sqrt{2}^{-1}$ to $1$, 
$F_-(t,0)$ ranges from $(\infty,\infty)$ to $(0,0)$
to $(\infty,-\infty)$ and traverses the curve $\gamma=\gamma_+\cup \gamma_-$ counterclockwise. The
slope ranges from $+\infty$ to $0$ to $-\infty$ and decreases monotonically. The
lines $s\rightarrow F_-(t_0,s)$ are tangent to the curve and their slopes also
decrease monotonically. Thus the situation is as depicted in Fig.~2.10 and these
curves form a pencil of rays that do not intersect. This completes the proof of
Theorem~\ref{T1.5} in the negative definite setting.

\section{The moduli space $\mathfrak{Z}_0$}\label{S3}
Let $\Gamma\in\mathcal{Z}_0\subset\mathbb{R}^6$ be a Christoffel symbol so that the associated
Ricci tensor $\rho$ has signature $(1,1)$. After making a linear change of coordinates,
we may assume 
$$
\rho=dx^1\otimes dx^2+dx^2\otimes dx^1=\left(\begin{array}{ll}0&1\\1&0\end{array}\right)\,.
$$
This normalizes the change of coordinates to the appropriate orthogonal group
\begin{equation}\label{E3.a}
\begin{array}{l}
SO(1,1):=\{T_a\in\operatorname{GL}(2,\mathbb{R}):T_a:(x^1,x^2)\rightarrow(ax^1,a^{-1}x^2)\text{ for }a\ne0\},\\[0.05in]
O(1,1):=SO(1,1)\cup \tilde T\cdot SO(1,1)\text{ for }\tilde T:(x^1,x^2)\rightarrow(x^2,x^1)\,.
\end{array}\end{equation}

In this case we do not have a single parametrization of
	$\mathfrak{Z}_0$ like we do for $\mathfrak{Z}_\pm$; there are in fact two parametrizations which give rise
	to the two boundary curves for 	$\mathfrak{Z}_0$ that must be glued along their common border $xy = 1$.

We set $\Gamma_{11}{}^1=\Gamma_{12}{}^2+\alpha$ and
$\Gamma_{22}{}^2=\Gamma_{12}{}^1+\beta$ to obtain
\begin{eqnarray*}
\rho&=&\left(\begin{array}{ll}
\beta\Gamma_{11}{}^2+\alpha\Gamma_{12}{}^2&
\Gamma_{12}{}^1\Gamma_{12}{}^2-\Gamma_{11}{}^2\Gamma_{22}{}^1\\
\Gamma_{12}{}^1\Gamma_{12}{}^2-\Gamma_{11}{}^2\Gamma_{22}{}^1&
\beta\Gamma_{12}{}^1+\alpha\Gamma_{22}{}^1\end{array}\right)\\
&=&\left(\begin{array}{ll}
(\Gamma_{12}{}^2,\Gamma_{11}{}^2)\cdot(\alpha,\beta)&(\Gamma_{12}{}^2,\Gamma_{11}{}^2)\cdot(\Gamma_{12}{}^1,-\Gamma_{22}{}^1)\\
(\Gamma_{12}{}^2,\Gamma_{11}{}^2)\cdot(\Gamma_{12}{}^1,-\Gamma_{22}{}^1)&(\Gamma_{22}{}^1,\Gamma_{12}{}^1)\cdot(\alpha,\beta).
\end{array}\right)\,.
\end{eqnarray*}
Suppose $(\alpha,\beta)\ne(0,0)$. Then $(-\beta,\alpha)$ is a non-zero vector perpendicular to $(\alpha,\beta)$.
Setting $\rho_{11}=\rho_{22}=0$ yields $(\Gamma_{12}{}^2,\Gamma_{11}{}^2)=a(-\beta,\alpha)$
and $(\Gamma_{22}{}^1,\Gamma_{12}{}^1)=b(-\beta,\alpha)$ for some $a,b\in\mathbb{R}$.
This implies that
$$
\rho_{12}=(\Gamma_{12}{}^2,\Gamma_{11}{}^2)\cdot(\Gamma_{12}{}^1,-\Gamma_{22}{}^1)=ab(-\beta,\alpha)\cdot(\alpha,\beta)=0
$$
and $\rho=0$. Thus $(\alpha,\beta)=(0,0)$. This yields the ansatz:
$$
\Gamma_{11}{}^1=\Gamma_{12}{}^2=x,\ \ \Gamma_{12}{}^1=\Gamma_{22}{}^2=y,\ \ 
\Gamma_{11}{}^2\Gamma_{22}{}^1=xy-1\ \ \text{for some}\ \ (x,y)\in\mathbb{R}^2\,.
$$
The map $T_a$ of Equation~(\ref{E3.a}) multiplies $\Gamma_{11}{}^2$ by $a^3$ and $\Gamma_{22}{}^1$ by $a^{-3}$.
Suppose first $xy-1>0$. This implies $\Gamma_{11}{}^2\Gamma_{22}{}^1>0$. By choosing $a$ appropriately, we
may assume $\Gamma_{11}{}^2=\Gamma_{22}{}^1$ is positive and obtain the parametrization:
\begin{equation}\label{E3.b}
\begin{array}{l}
\Gamma_{0,1}(x,y):=\{
\Gamma_{11}{}^1=\Gamma_{12}{}^2=x,\quad\Gamma_{11}{}^2=\sqrt{xy-1},\\
\hphantom{\Gamma_0^1(x,y):=\{..}
\Gamma_{22}{}^1=\sqrt{xy-1},\quad\Gamma_{12}{}^1=\Gamma_{22}{}^2=y\}\,.
\end{array}\end{equation}
On the other hand, if $xy-1<0$, then $\Gamma_{11}{}^2\Gamma_{22}{}^1$ is negative and again by choosing $a$ appropriately
to define $T_a$,
we may assume $\Gamma_{11}{}^2=-\Gamma_{22}{}^1$ and $\Gamma_{11}{}^2$ is positive and obtain the parametrization:
\begin{equation}\label{E3.c}
\begin{array}{l}
\Gamma_{0,2}(x,y):=\{
\Gamma_{11}{}^1=\Gamma_{12}{}^2=x,\quad\Gamma_{11}{}^2=\sqrt{1-xy},\\
\hphantom{\Gamma_0^1(x,y):=\{..}
\Gamma_{22}{}^1=-\sqrt{1-xy},\quad\Gamma_{12}{}^1=\Gamma_{22}{}^2=y\}\,.
\end{array}\end{equation}
The case $xy=1$ is exceptional. Set
$$
\Theta_{0,i}(x,y):=(\psi_3(\Gamma_{0,i}(x,y)),\Psi_3(\Gamma_{0,i}(x,y))\text{ for }i=1,2\,.
$$
We compute:
\begin{equation}\label{E3.d}
\begin{array}{l}
\Theta_{0,1}(x,y)=(-2 + 8 x y,1-4 x y+8 x^2 y^2-4 (x^3 +y^3)\sqrt{x y-1}),\\
\Theta_{0,2}(x,y)=(-2 + 8 x y,1-4 x y+8 x^2 y^2+4 (x^3-y^3) \sqrt{1-x y}).
\end{array}
\end{equation}
We distinguish cases:
\subsection*{Case 3.1. Suppose $xy>1$}
 Adopt the notation of Equation~(\ref{E3.b}).
By interchanging $x^1$ and $x^2$ we can interchange the roles of $x$ and $y$ to assume $|x|\ge|y|$.
It is natural to make the change of variables:
\begin{equation}\label{E3.e}
(s(x,y),t(x,y)):=(xy,x/\sqrt s)\text{ and }(x(s,t),y(s,t)):=(\sqrt st,\sqrt st^{-1})\,.
\end{equation}
To ensure $xy>1$, we take $s>1$. To ensure $|x|\ge|y|$, we take $|t|\ge1$. Then: 
$$
\Theta_{0,1}(s(x,y),t(x,y))=\big(-2+8s,1-4 s+8s^2-4s^{3/2}\sqrt{s-1}(t^3+t^{-3})\big)\,.
$$
We may recover $s$ from the first coordinate and $t^3+t^{-3}$ from the second coordinate. The function $t\rightarrow t^3+t^{-3}$ is
monotonically increasing and negative for $t\in(-\infty,-1)$ and monotonically increasing and positive for $t\in(1,\infty)$.
Consequently we may recover $t$ as well and conclude $\Theta_0$ is 1-1 on this region. 
We give below in Fig.3.1 a
picture of this region. The vertical straight lines correspond to appropriate values of $s=s_n$ taken to be constant. The
upper rays are given by $t\le-1$ and the lower rays by $t\ge1$.
\smallbreak\centerline{
\includegraphics[height=3.5cm,keepaspectratio=true]{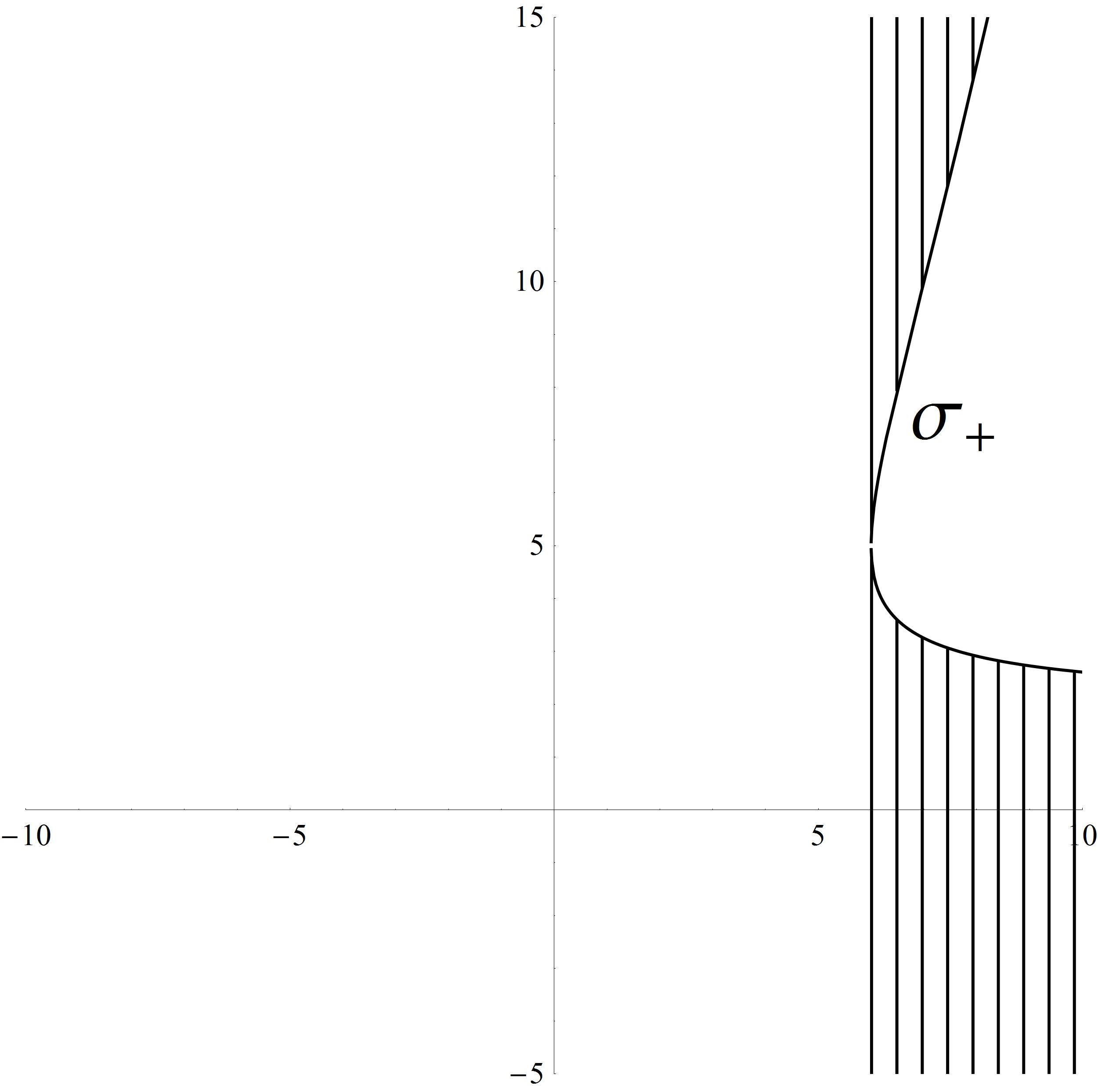}}
\smallbreak\centerline{Fig.~3.1$\Uparrow$}

\subsection*{Case 3.2. Suppose that $xy=1$}
We have $\Gamma_{11}{}^2\Gamma_{22}{}^1=0$.
If
$\Gamma_{22}{}^1=0$, we obtain the point $(6,5)$ which was missing
from the boundary curve discussed in Case 3.1 above. On the other hand, if $\Gamma_{22}{}^1$ is non-zero,
we can normalize $\Gamma_{22}{}^1=1$ and obtain $\Psi_3=5-4xy^{-2}=5-4x^3$ where $x\ne0$. This
yields the remainder of the line $\psi_3=6$. This is pictured below in Fig.~3.2.

\subsection*{Case 3.3. Suppose $0<xy<1$} 
The map $(x^1,x^2)\rightarrow(-x^2,-x^1)$ replaces $(x,y)$ by $(-y,-x)$.
Thus as $x$ and $y$ have the same sign, we may assume without loss of generality $x>0$ and $y>0$ in
using the parametrization $\Gamma_{0,2}$ of Equation~(\ref{E3.c}). 
We make the change of variables $x=\sqrt st$, $y=\sqrt s/t$ given in Equation~(\ref{E3.e}) where $0<s<1$. We take $t>0$ since
$x>0$. We use Equation~(\ref{E3.d}) to obtain
$$
\Theta_{0,2}(s(x,y),t(x,y))=(-2+8s,1-4s+8s^2+4 s^{\frac32}\sqrt{1-s}(t^3-t^{-3})\,.
$$
The parameter $s$ is determined by $\psi_3$
and $t^3-t^{-3}$ is determined by $\Psi_3$. Since $t^3-t^{-3}$ is positive and
monotonic increasing for $t\in(0,\infty)$, we may compute $t$. Consequently
$(\psi_3,\Psi_3)$ is $1-1$ in this range and completely fills up the region $-2<\psi_3<6$ and 
$-\infty<\Psi_3<\infty$. This is pictured  below in Fig. 3.3. Note that if $0 < xy < 1$ then $-2 < xy < 6$.
   
\subsection*{Case 3.4. Suppose $xy=0$.}
We set $x=0$ in $\Gamma_{0,2}(x,y)$. We obtain $\psi_3=-2$ and
$\Psi_3=1-4y^3$. So this fills up the complete line. This is pictured below in Fig. 3.4:
\smallbreak\centerline{
\includegraphics[height=3.5cm,keepaspectratio=true]{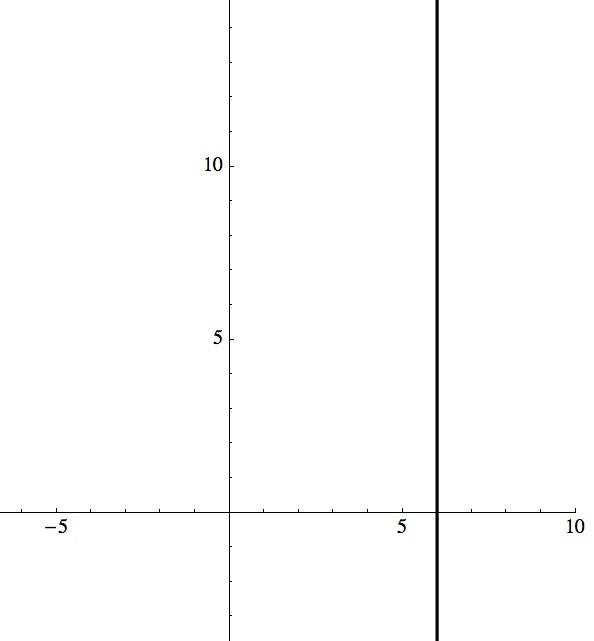}\qquad
\includegraphics[height=3.5cm,keepaspectratio=true]{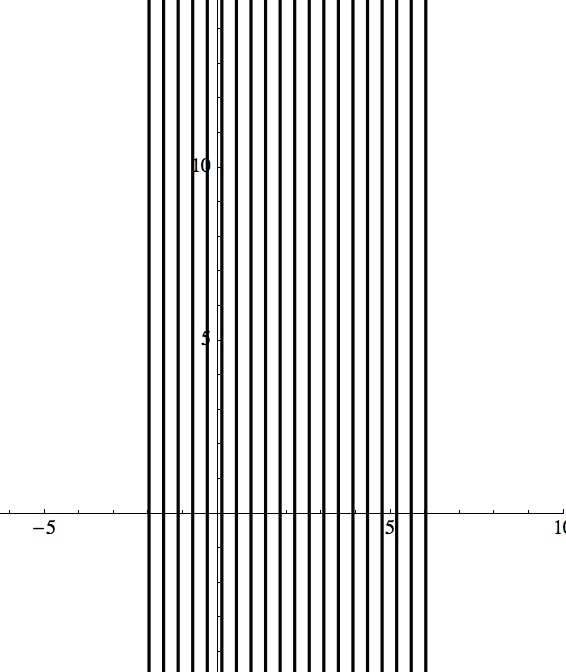}\qquad
\includegraphics[height=3.5cm,keepaspectratio=true]{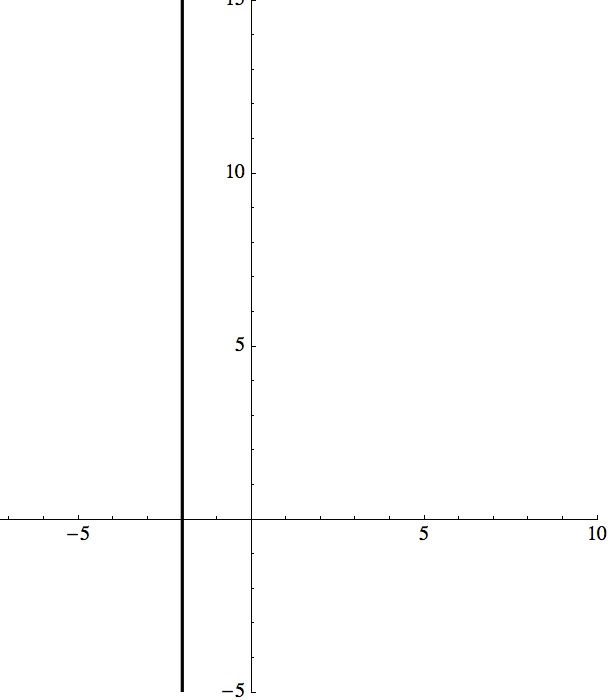}}
\smallbreak\centerline{Fig.~3.2$\Uparrow$\qquad\qquad\qquad Fig.~3.3$\Uparrow$\qquad\qquad\qquad Fig.~3.4$\Uparrow$}

\subsection*{Case 3.5. Suppose $xy<0$}
The map
$(x^1,x^2)\rightarrow(-x^2,-x^1)$ preserves the normalization. So we can assume $|x|\ge|y|$. We now make the change of variables
$$
(s(x,y),t(x,y)):=(-xy,x/\sqrt s)\text{ and }(x(s,t),y(s,t)):=(\sqrt s t,-\sqrt s t^{-1})\,.
$$
We then have $s>0$, and $|t|\ge1$. We compute
$$
\Theta_{0,2}(\Gamma_{0,2}(x(t,s),y(t,s)))=(-2-8s,1+4s+8s^2+4s^{3/2}(t^3 +t^{-3})\sqrt{1+s}\,.
$$
Thus $s$ is determined and $t^3 +t^{-3}$ is determined. This is negative and monotonically increasing for $-\infty<t\le-1$
and positive and monotonically increasing for
$1\le t<\infty$ and hence $t$ is determined. Thus $\Theta$ is 1-1. This region is pictured below in
Fig.~3.5; the full region $\Theta_0(\mathfrak{Z}_0)$ is pictured in Fig.~3.6:
\smallbreak\centerline{
\includegraphics[height=3.5cm,keepaspectratio=true]{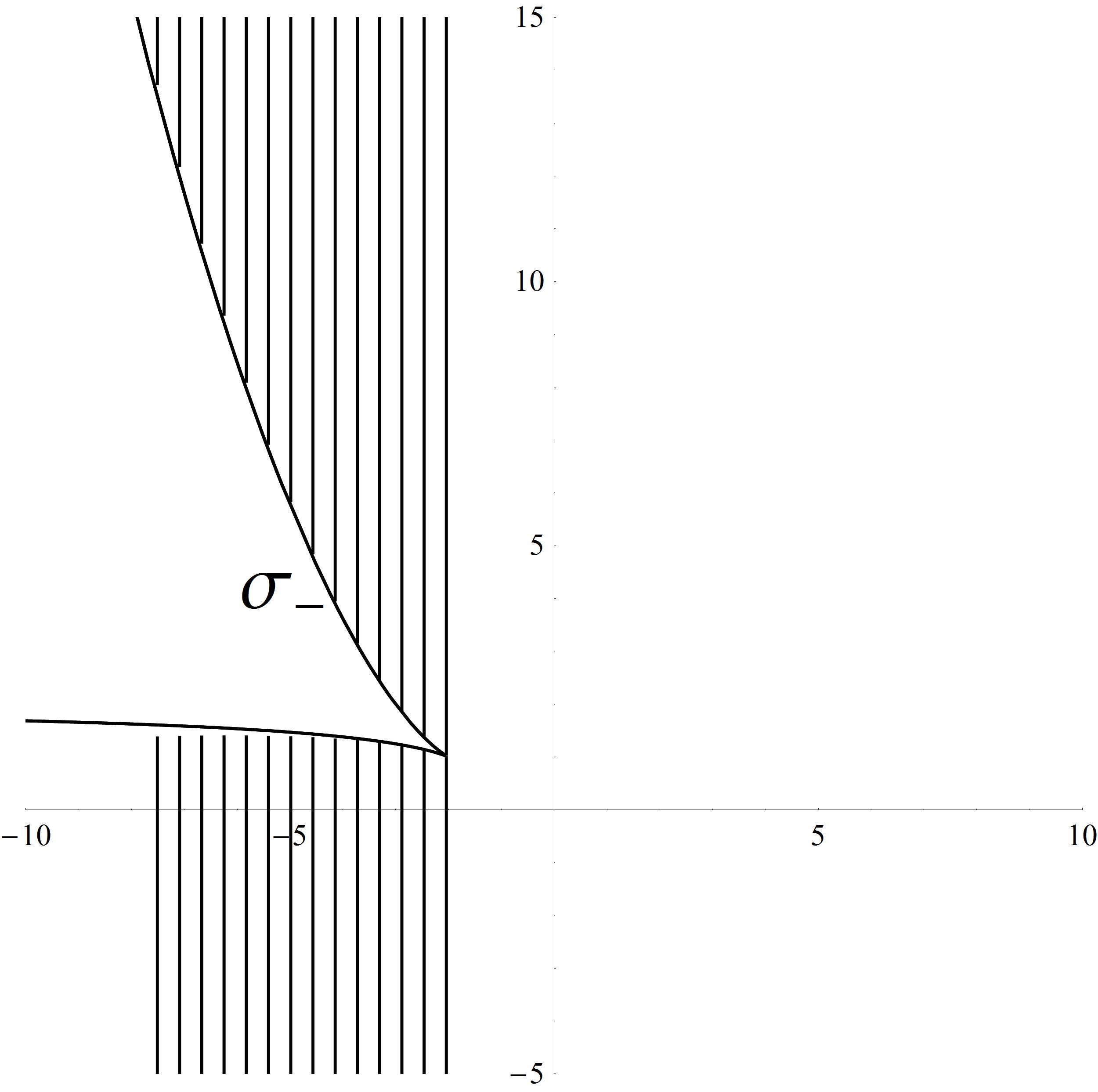}\quad
\includegraphics[height=3.5cm,keepaspectratio=true]{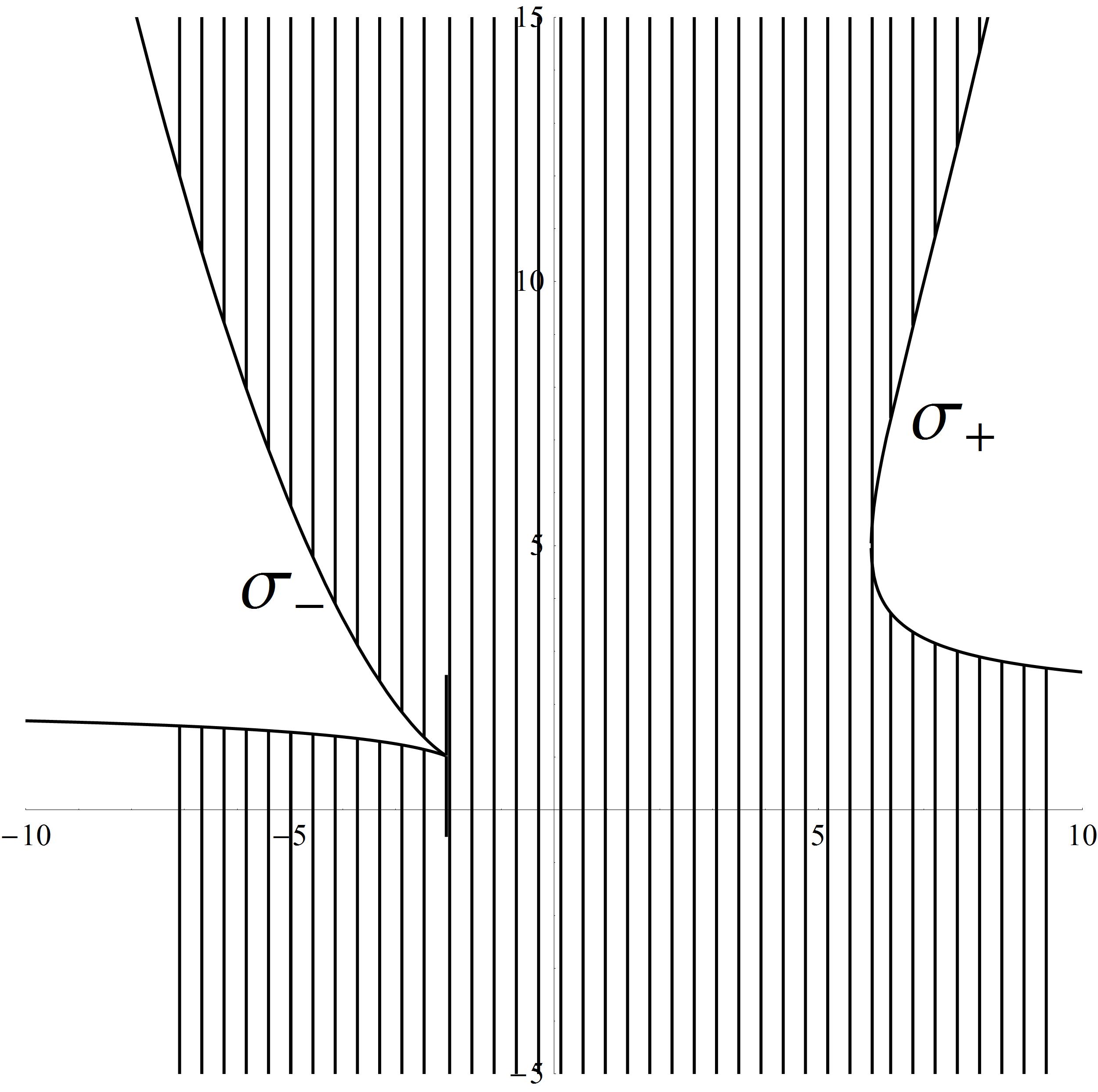}}
\smallbreak\centerline{Fig.~3.5$\Uparrow$\qquad\qquad\quad Fig.~3.6$\Uparrow$}

\section{The proof of Theorem~\ref{T1.6} and Theorem~\ref{T1.7}}\label{S4}
\subsection{The boundary curves}
In this section, we will prove Theorem~\ref{T1.6}.
The boundary of $\mathfrak{Z}_0$ consists of two pieces. We shall identify the right hand component of the boundary of $\mathfrak{Z}_0$
with the left hand boundary of $\mathfrak{Z}_+$ and the left hand component of the boundary of $\mathfrak{Z}_0$ with the right hand
boundary of $\mathfrak{Z}_-$.
\smallbreak\centerline{\includegraphics[height=3.2cm,keepaspectratio=true]{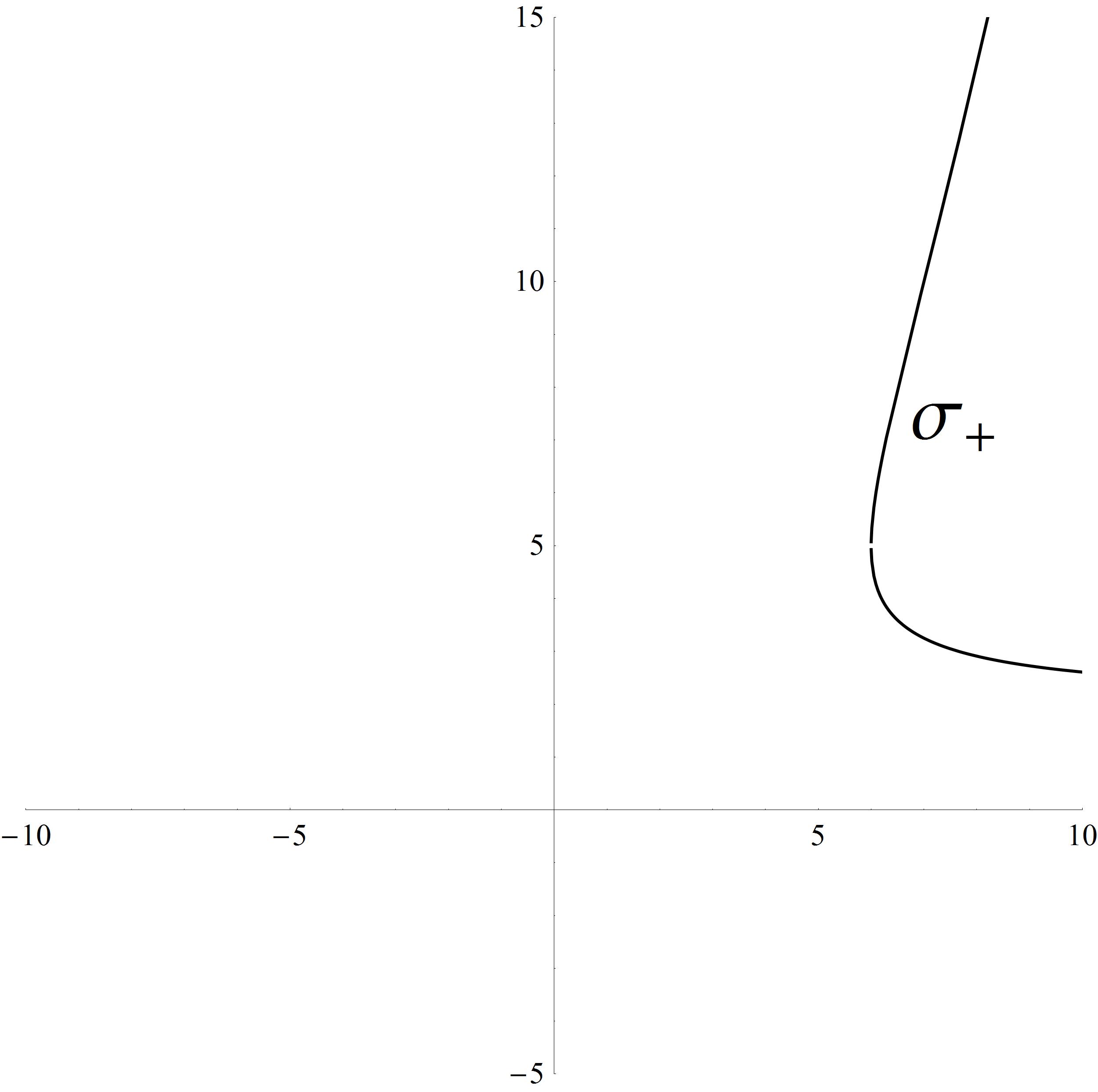}}
\smallbreak\centerline{Fig.~4.1$\Uparrow$}
\subsection*{The right boundary of $\mathfrak{Z}_0$ and the left boundary of $\mathfrak{Z}_+$}
We recall the parametrizations defined previously 
$$\begin{array}{l}
\Gamma_+(x,y):=\ \left\{\begin{array}{lll}
\Gamma_{11}{}^1= x+\frac{1}{x},&\Gamma_{11}{}^2= 0,\qquad\quad\ &\Gamma_{12}{}^1= 0,\\
\Gamma_{12}{}^2= x,&\Gamma_{22}{}^1= x,&\Gamma_{22}{}^2= y.\end{array}\right\}\\[0.15in]
\Gamma_{0,1}(x,y):=\left\{\begin{array}{lll}
\Gamma_{11}{}^1= x,\qquad&\Gamma_{11}{}^2= \sqrt{x y-1},&\Gamma_{12}{}^1= y,\\
\Gamma_{12}{}^2= x,&\Gamma_{22}{}^1= \sqrt{x y-1},&\Gamma_{22}{}^2= y.
\end{array}\right\}
\end{array}$$ 
The left boundary of $\Theta_+(\mathfrak{Z}_+)$ is the curve $\sigma_+(u):=\Theta_+(\Gamma_+(u,0))$ 
and the right boundary of $\Theta_{0,1}(\mathfrak{Z}_0)$ 
is the curve $\sigma_{0,1}(v):=\Theta_{0,1}(\Gamma_{0,1}(v,v))$. We have (see Equation~(\ref{E1.c})) that:
\begin{eqnarray*}
&&\sigma_+(u)=(2+u^{-2}+4u^2,2+4u^2+4u^4)\text{ for }0<u,\\
&&\sigma_{0,1}(v)=(-2+8v^2,1-4v^2+8v^4-8v^3\sqrt{v^2-1})\text{ for }|v|>1\,.
\end{eqnarray*}
We set $-2+8v^2=2+u^{-2}+4u^2$ to see $8v^2=4+u^{-2}+4u^2=(2 u+u^{-1})^2$.
Thus
$$\sqrt{v^2-1}=2^{-3/2}\sqrt{8v^2-8}=2^{-3/2}\sqrt{(2 u+u^{-1})^2-8}=2^{-3/2}\sqrt{(2u-u^{-1})^2}\,.$$
There are two possible choices of the square root.
\begin{enumerate}
\item We set $v=2^{-3/2} (2u+u^{-1})\ge1$,
$\sqrt{v^2-1}=2^{-3/2}(u^{-1}-2u)$, $0<u<\frac1{\sqrt2}$ to obtain
$1-4v^2+8v^4-8v^3\sqrt{v^2-1}=2+4u^2+4u^4$. This is the bottom curve marked in red in Fig. 4.1.
\item We set $v=-2^{-3/2}(2u+u^{-1})\le -1$,
$\sqrt{v^2-1}=2^{-3/2}(2u-u^{-1})$, $\frac1{\sqrt2}<u$ to obtain
$1-4v^2+8v^4-8v^3\sqrt{v^2-1}=2+4u^2+4u^4$. This is the upper curve marked in blue in Fig.~4.1 above.
\end{enumerate}

\subsection*{The left boundary of $\mathfrak{Z}_0$ and the right boundary of $\mathfrak{Z}_-$}
We recall the parametrization defined previously 
$$\begin{array}{l}
\Gamma_-(x,y):=\ \left\{\begin{array}{lll}
\Gamma_{11}{}^1= x-\frac{1}{x},&\Gamma_{11}{}^2= 0,\qquad\quad\ &\Gamma_{12}{}^1= 0,\\
\Gamma_{12}{}^2= x,&\Gamma_{22}{}^1= x,&\Gamma_{22}{}^2= y.\end{array}\right\}\\[0.15in]
\Gamma_{0,2}(x,y):=\left\{\begin{array}{lll}
\Gamma_{11}{}^1= x,\qquad&\Gamma_{11}{}^2= \sqrt{1-x y},&\Gamma_{12}{}^1= y,\\
\Gamma_{12}{}^2= x,&\Gamma_{22}{}^1=- \sqrt{1-x y},&\Gamma_{22}{}^2= y.
\end{array}\right\}
\end{array}$$
The right boundary of $\Theta_-(\mathfrak{Z}_-)$ is the curve $\sigma_-(u):= \Omega_-(\Theta-(u,0))$ and the left boundary of 
$\Theta_{0,2}(\mathfrak{Z}_0)$ is the curve $\sigma_{0,2}(v):= \Theta_{0,2}(\Gamma_{0,2}(v,-v))$. We have
\begin{eqnarray*}
&&\sigma_-(u)=(2-4 u^2-u^{-2},4 u^4-4 u^2+2),\text{ for } 0<u\\
&&\sigma_{0,2}(v)=( -2-8v^2,1+4v^2+8v^4+8v^3\sqrt{1+v^2}), \text{ for } v\in \mathbb{R}.
\end{eqnarray*}
We solve the first equation to see 
$$8v^2=(2u-u^{-1})^2\text{ and }\sqrt{v^2+1}=2^{-3/2}\sqrt{(2u+u^{-1})^2}\,.$$
Again, there are two choices of the square root.
\begin{enumerate}
\item We set $v=2^{-3/2}(2u-u^{-1})<0$, $0<u<\frac1{\sqrt 2}$, to obtain the lower curve in Fig.~4.2 below:
$1 + 4 v^2 + 8 v^4 + 8 v^3 \sqrt{1+v^2}=2 - 4 u^2 + 4 u^4$.
\item We set $v=2^{-3/2}(2u-u^{-1})>0$, $\frac1{\sqrt 2}<u$ to obtain the  upper curve in Fig.~4.2 below:
$1 + 4 v^2 + 8 v^4 + 8 v^3 \sqrt{1+v^2}=2 - 4 u^2 + 4 u^4$.
\end{enumerate}
\smallbreak\centerline{\includegraphics[height=3.2cm,keepaspectratio=true]{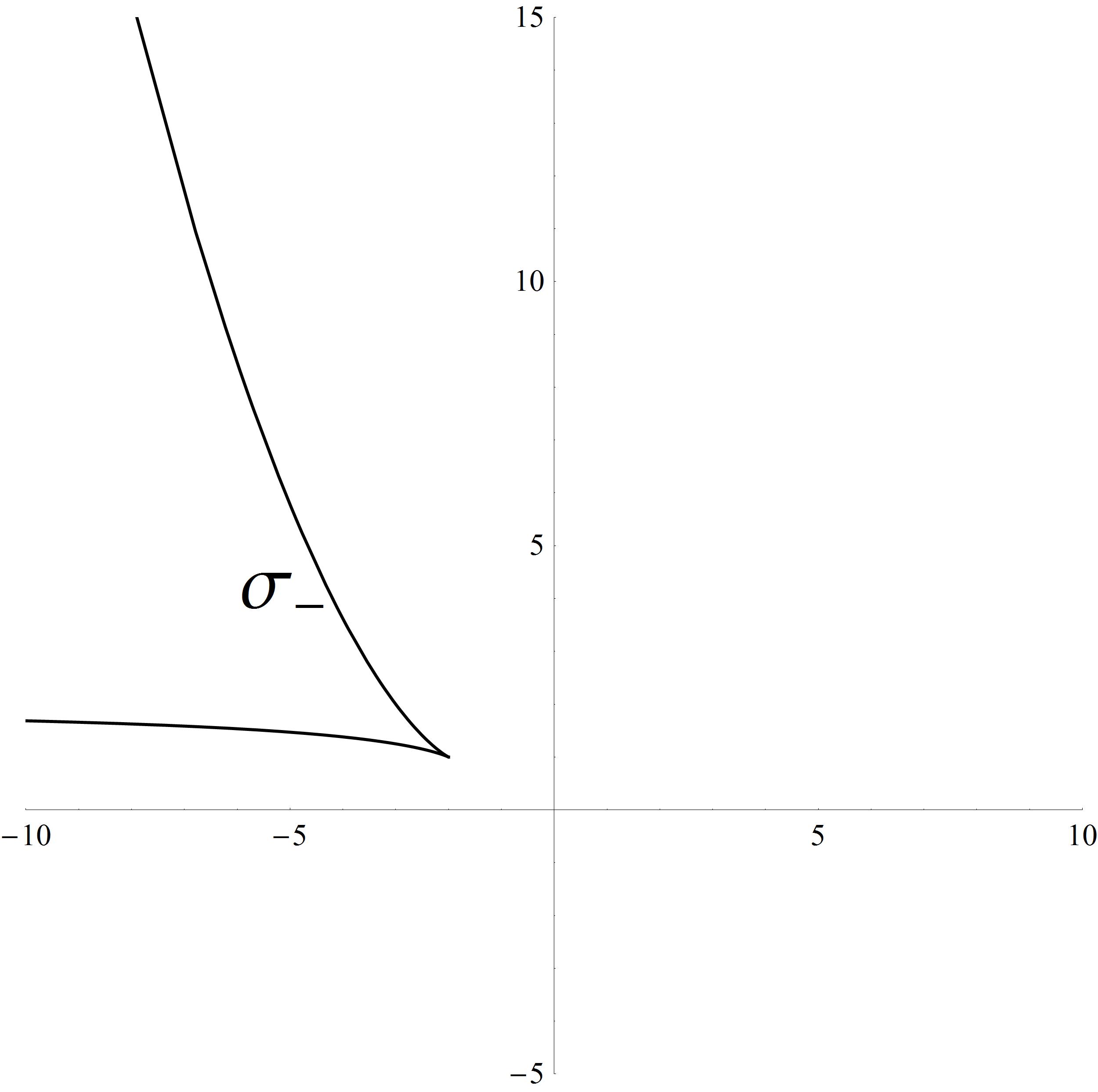}}
\smallbreak\centerline{Fig.~4.2$\Uparrow$}

\subsection{The oriented moduli space}
In this section, we shall prove Theorem~\ref{T1.7}.
Let $\Gamma\in\mathbb{R}^6$ define a Christoffel symbol with non-degenerate Ricci tensor $\rho$.
Assume $T^*\Gamma=\Gamma$ for $\operatorname{id}\ne T\in\operatorname{GL}(2,\mathbb{R})$. 

\subsection*{Suppose $\rho$ is positive definite and $\det(T)>0$}
We choose local coordinates so $\rho=\operatorname{diag}(1,1)$;
$T$ is then a rotation through an angle $\theta$. We can diagonalize $T$ over $\mathbb{C}$.
Since $\det(T)=1$, $T=\operatorname{diag}(a,a^{-1})$ for $a\in S^1$. 
We have 
$$(T^*\Gamma)_{ij}{}^k=a^\epsilon \Gamma_{ij}{}^k\text{ for }\epsilon=\pm1\pm1\pm1\in\{-3,-1,1,3\}\,.$$
If $\Gamma_{11}{}^2$ or $\Gamma_{22}{}^1$ is non-zero,
we conclude $a^{\pm3}=1$. If one of the other Christoffel symbols is non-zero, we conclude $a^{\pm1}=1$. Thus
either $T$ is the identity or $T$ is a rotation thru an angle of $\pm\frac{2\pi}3$. If $|\mathcal{G}^+(\Gamma)|\ne1$,
$T_1:=T_{\frac{ 2\pi}3}$ and $T_2:=T_{\frac{4\pi}3}$ must preserve $\Gamma$. 
We use the parametrization 
\smallbreak
$\Gamma_+(x,y):=\{
\Gamma_{11}{}^1=\textstyle x\pm\frac1x,
\Gamma_{11}{}^2=0,\Gamma_{12}{}^1=0,\Gamma_{12}{}^2=x,
\Gamma_{22}{}^1=x,\Gamma_{22}{}^2=y \}
$\smallbreak\noindent
given in Definition~\ref{D2.1}
and compute
\begin{eqnarray*}
&&0=(T_{\frac{ 2\pi}3}^*\Gamma)_{11}{}^2=(8x)^{-1}\{-\sqrt 3-2\sqrt 3x^2-3xy\} ,\\
&&0=(T_{\frac{ 4\pi}3}^*\Gamma)_{11}{}^2=(8x)^{-1}\{\sqrt 3+2\sqrt 3x^2-3xy\}\,.
\end{eqnarray*}
This implies $y=0$ and $\sqrt 3+2\sqrt 3 x^2=0$. This is not possible. Thus $|\mathcal{G}^+(\Gamma)|=1$ and
$\operatorname{GL}^{+}(2,\mathbb{R})$ acts without fixed points on $\mathcal{Z}_+$.
\subsection*{Suppose $\rho$ is positive definite and $\det(T)<0$}. 
We can choose coordinates so $\rho=\operatorname{diag}(1,1)$. Since $T^2\in\mathcal{G}^+(\Gamma)$,
we conclude $T^2=\operatorname{id}$ so $T$ is a reflection. We can normalize the choice of coordinates
so $T(x^1,x^2)=(x^1,-x^2)$ and, consequently, $(T^*\Gamma)_{12}{}^1=-\Gamma_{12}{}^1$. 
Thus $\Gamma_{12}{}^1=0$. The discussion of Section~\ref{S2}
implies $\Gamma=\Gamma_+(x,y)$ for some $(x,y)$.
A similar argument shows $\Gamma_{22}{}^2=0$ as
well. Thus $y=0$ and $\Gamma$ belongs to the boundary of $\mathfrak{C}_+$. Conversely, if $\Gamma$
belongs to the boundary of $\mathfrak{C}_+$, then $y=0$ and it follows $T^*\Gamma_+(x,0)=\Gamma_+(x,0)$.
This completes the proof of Theorem~\ref{T1.7} if $\rho$ is positive definite.
\subsection*{Suppose $\rho$ is negative definite and $\det(T)>0$} The same argument
used in the positive definite setting shows that $T$ is the identity
or is a rotation thru an angle of $\pm\frac{ 2\pi}3$. This time, we use the parametrization
\smallbreak
$\Gamma_-(x,y)=\{
\Gamma_{11}{}^1=\textstyle x-\frac1x,
\Gamma_{11}{}^2=0,\Gamma_{12}{}^1=0,\Gamma_{12}{}^2=x,
\Gamma_{22}{}^1=x,\Gamma_{22}{}^2=y\}
$\smallbreak\noindent
 and compute
\begin{eqnarray*}
&&0=(T_{\frac{ 2\pi}3}^*\Gamma)_{11}{}^2=(8x)^{-1}\{-2 \sqrt{3} x^2-3 x y+\sqrt{3}\}\\
&&0=(T_{\frac{ 4\pi}3}^*\Gamma)_{11}{}^2=(8x)^{-1}\{2 \sqrt{3} x^2-3 x y-\sqrt{3}\}\,.
\end{eqnarray*}
This implies $y=0$ and $x=\frac1{\sqrt2}$. This is, of course, the cusp point of the boundary
$$
\Gamma=\textstyle\frac1{\sqrt 2}\{\Gamma_{11}{}^1=-1,\Gamma_{11}{}^2=0,\Gamma_{12}{}^1=0,\Gamma_{12}{}^{2}=1,\Gamma_{22}{}^1= 1,\Gamma_{22}{}^2=0\}\,.
$$
And we verify explicitly $T_\theta^*\Gamma_{\text{csp}}=\Gamma_{\text{csp}}$ for $\theta=\frac{ 2\pi}3$ or 
$\theta=\frac{ 4\pi}3$.
Thus $|\mathcal{G}^+(\Gamma)|=3$ if $[\Gamma]=[\Gamma_{\text{csp}}]$ and 
$|\mathcal{G}^+(\Gamma)|=1$
otherwise.
\subsection*{Suppose $\rho$ is negative definite and $\det(T)<0$} Again, we can perform a rotation to
assume $T(x^1,x^2)=(x^1,-x^2)$. The same argument as that given in the  positive definite setting shows
$T^*\Gamma_-(x,y)=\Gamma_-(x,y)$ if and only if $y=0$, i.e. $\Gamma$ belongs to the boundary of 
$\mathfrak{C}_-$.
Thus $\mathcal{G}(\Gamma)-\mathcal{G}^+(\Gamma)$ is non-zero if and only if $y=0$;
$|\mathcal{G}(\Gamma)|=2|\mathcal{G}^+(\Gamma)|$ in this instance. This completes the proof
of Theorem~\ref{T1.7} if $\rho$ is negative definite.
\subsection*{Suppose $\rho$ is indefinite and $\det(T)>0$} We change coordinates
to ensure $\rho=dx^1\otimes dx^2+dx^2\otimes dx^1$.
In this case, $T(x^1,x^2)=(ax^1,a^{-1}x^2)$ and we
do not need to complexify. The equation $a^{\pm1}=1$ or $a^{\pm3}=1$ implies $a=1$ and $T$ is the identity.
\subsection*{Suppose $\rho$ is indefinite and $\det(T)<0$}
We have $T(x^1,x^2)=(ax^2,a^{-1}x^1)$ for $a\in\mathbb{R}$. We use the ansatz 
$$
\Gamma_{11}{}^1=\Gamma_{12}{}^2=x,\ \ \Gamma_{12}{}^1=\Gamma_{22}{}^2=y,\ \ 
\Gamma_{11}{}^2\Gamma_{22}{}^1=xy-1,
$$
for $(x,y)\in\mathbb{R}^2$ described in Section~\ref{S3}. We consider cases:
\subsection*{Case 4.1. Suppose $xy>1$}. 
We continue the discussion of Case 3.1 above and normalize 
$\Gamma_{11}{}^2=\Gamma_{22}{}^1=\sqrt{xy-1}$. Then $T^*\Gamma_{11}{}^2=a^3\Gamma_{22}{}^1$
so $a^3=1$ and $a=1$. We then have $T^*\Gamma_{11}{}^1=\Gamma_{22}{}^{2}$ so $x=y$.
Thus $\Gamma$ lies on the curve $ \sigma_{0,1}(t,t)$ which is in the right hand boundary of $\Theta_0(\mathfrak{Z}_0)$. Conversely, if $x=y$, then $T^*\Gamma=\Gamma$.
\subsection*{Case 4.2. Suppose $xy=1$.} We continue the discussion of Case 3.2 above.
We have that $\Gamma_{11}{}^2\Gamma_{22}{}^1=0$. Since $T^*$
interchanges $\Gamma_{11}{}^2$ and $\Gamma_{22}{}^1$ up to a multiple, we have
$\Gamma_{11}{}^2=\Gamma_{22}{}^1=0$. 
We can normalize $x=y=1$ to obtain the point $(6,5)$ which was missing
from the boundary curve discussed above; conversely this point is clearly invariant under the action of $T$.
\subsection*{Case 4.3. Suppose $0<xy<1$} We continue the discussion of Case 3.3 above. 
Let $\Gamma_{11}{}^2=\sqrt{1-xy}$ and $\Gamma_{22}{}^1=-\sqrt{1-xy}$. 
Since $T^*\Gamma_{11}{}^2=a^3\Gamma_{22}{}^1=-a^3\Gamma_{11}{}^2$, we have $a=-1$.
But then $T^*\Gamma_{11}{}^1=-\Gamma_{22}{}^2$ implies $x=-y$. This is not possible with $0<xy<1$.
\subsection*{Case 4.4. Suppose $0=xy$} We continue the discussion of Case 3.4 above. Since
$T$ interchanges $\Gamma_{11}{}^1$ and $\Gamma_{22}{}^2$ up to a multiple, we have $x=y=0$.
We can normalize $\Gamma_{11}{}^2=1$ and $\Gamma_{22}{}^1=-1$. This yields the cusp point
$(-2,1)$ of $\mathfrak{C}_0$ in $\mathbb{R}^2$.
\subsection*{Case 4.5. Suppose $xy<0$} We continue the discussion of Case 3.5 above. We can normalize
$\Gamma_{11}{}^2=\sqrt{1-xy}$ and $\Gamma_{22}{}^1=-\sqrt{1-xy}$. This shows $a=-1$ so $x=-y$.
This yields the boundary curve $\sigma_{0,2}$. This completes the proof of  Assertions (1), (2) and (3) in Theorem~\ref{T1.7}.

\subsection{Invariants detecting $\mathfrak{Z}_\epsilon^+$}
Let $\chi(\Gamma)$ be the invariant of Equation~(\ref{E1.d}):
$$
\chi(\Gamma):=\rho(\Gamma_{ab}{}^b\Gamma_{ij}{}^k\rho^3_{kl}\rho^{ij}dx^a\wedge dx^l,\text{dvol})\,.
$$
This is an invariant of $\mathfrak{Z}_\varepsilon^+$. We  use the parametrizations $\Gamma_\pm(x,y)$
of Definition~\ref{D2.1} and the parametrization $\Gamma_{0,1}$ and $\Gamma_{0,2}$ of
Equations~(\ref{E3.b}) and (\ref{E3.c}). We compute:
\begin{equation}\label{Ex4.a}
\begin{array}{l}
\chi_+(\Gamma_+(x,y))=yx^{-3} \left(4 x^6+x^4 y^2+x^2 \left(y^2-3\right)-1\right),\\[0.05in]
\chi_-(\Gamma_-(x,y))=yx^{-3} \left(-4 x^6-x^4 y^2+x^2 \left(y^2+3\right)-1\right),\\[0.05in]
\chi_0(\Gamma_{0,1}(x,y))=8 \sqrt{x y-1} \left(y^3-x^3\right),\\[0.05in]
\chi_0(\Gamma_{0,2}(x,y))=8 \sqrt{1-x y} \left(x^3+y^3\right)\,.
\end{array}\end{equation}
We recall Equation~(\ref{E2.c})  that the Jacobian determinant is given by
\begin{equation}\label{Ex4.b}
\begin{array}{l}
J_-(x,y):=\frac{4 \left(x^2+1\right) y \left(4 x^6+x^4 y^2-x^2 \left(y^2+3\right)+1\right)}{x^5},\\
J_+(x,y):=-\frac{4 \left(x^2-1\right) y \left(4 x^6+x^4 y^2+x^2 \left(y^2-3\right)-1\right)}{x^5}.
\end{array}\end{equation}
\subsection*{Suppose $\rho$ is positive definite} Excluding the line $x=1$, the Jacobian determinant $J_+$
given in Equation~(\ref{Ex4.b})
vanishes precisely on the $x$-axis where $y=0$ or on the Jacobi Locus $L_+$ and
these two curves are identified by $\Theta_+$. 
Thus we use Equation~(\ref{Ex4.a}) to see that $\chi_+$ vanishes precisely
on those elements of $\mathfrak{Z}_+$ which are invariant under reversing the orientation.
In particular, note that $\chi_+(x,y):=\chi(\Gamma_+(x,y))$ 
changes sign if we replace $y$ by $-y$ (or $x$ by $-x$) which
reverses the orientation as it corresponds to the coordinate change
$x^2\rightarrow-x^2$ (or $x^1\rightarrow-x^1)$.
This shows that $(\psi_3,\Psi_3,\chi)$ completely detect  $\mathfrak{Z}_+$. A picture of
this surface in $\mathbb{R}^3$ is given below. It crosses the plane where the third coordinate
(listed horizontally in stripes) vanishes; this is exactly the boundary of $\mathfrak{Z}_+$. We have doubled
this region with boundary to produce what looks like a smooth surface in $\mathbb{R}^3$.

\smallbreak\centerline{\includegraphics[height=3.5cm,keepaspectratio=true]{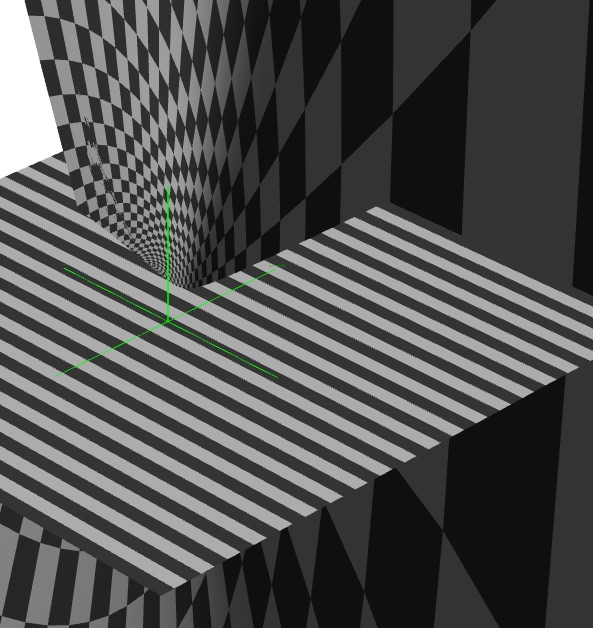}
\quad\includegraphics[height=3.5cm,keepaspectratio=true]{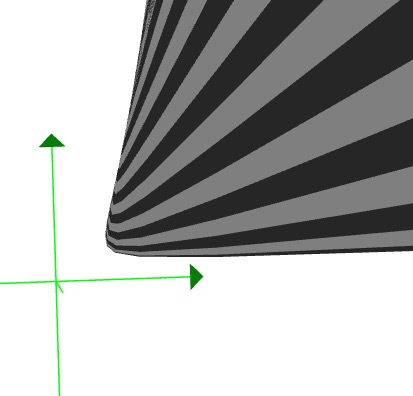}
}
\smallbreak\centerline{Fig.~4.3$\Uparrow$\qquad\qquad\qquad Fig.~4.4$\Uparrow$}

\subsection*{Suppose $\rho$ is negative definite}
The analysis is essentially the same as in the positive definite
setting. Let $\chi_-(x,y):=\chi(\Sigma_-(x,y))$. The Jacobian determinant $J_-$ vanishes exactly where 
$\chi_-$ vanishes; this is the locus of points where reversing the orientation does not change the element in
the moduli space. We present two viewpoints - one with the cusp point to the left and one to the right
giving the ``outside" and ``inside" of a surface which appears smooth in $\mathbb{R}^3$ except at the cusp
point.
\smallbreak\centerline{\includegraphics[height=3.5cm,keepaspectratio=true]{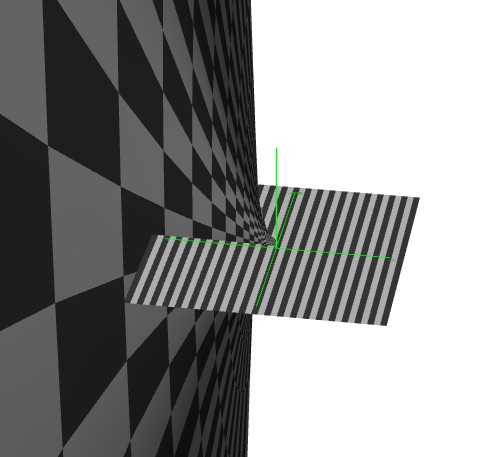}
\quad
\includegraphics[height=3.5cm,keepaspectratio=true]{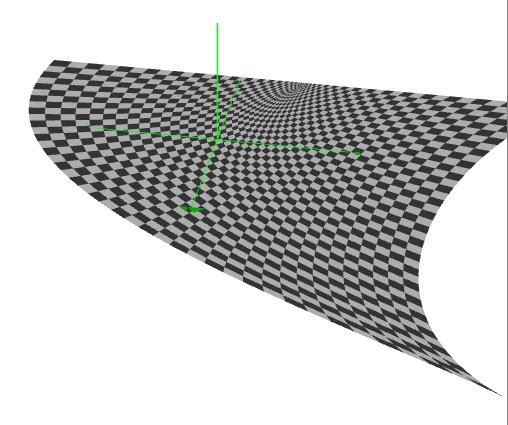}}
\smallbreak\centerline{Fig.~4.5$\Uparrow$\qquad\qquad\qquad Fig.~4.6$\Uparrow$}

\subsection*{Suppose $\rho$ is indefinite} If $xy=1$, we can change the parametrization so $x=y=1$ and
obtain a point of $\sigma_+$. Otherwise $\chi_{0,1}(x,y):=\chi(\Gamma_{0,1}(x,y))$ vanishes
precisely when $x=y$ and this is the range of $\sigma_+$ which is the right boundary of 
$\Theta_0(\mathfrak{Z}_0)$. Similarly $\chi_{0,2}(x,y):=\chi(\Gamma_{0,2}(x,y))$ 
vanishes precisely when $x=-y$ and
this is the range of $\sigma_-$ which is the left boundary of $\Theta_0(\mathfrak{Z}_0)$. Thus once again $\chi$
suffices to distinguish the relevant orientations. We omit pictures as they were not particularly useful.

\section{Type~$\mathcal{B}$ structures}\label{S5}
Let $C\in\mathcal{Z}_{23\mathcal{B}}$ define a homogeneous structure on $\mathbb{R}^+\times\mathbb{R}$
with $2\le\kappa(\mathcal{M}_C)\le3$.
Use $\pm dx^1\wedge dx^2$ to define a corresponding oriented surface $\mathcal{M}_C^\pm$. Let 
$\mathfrak{Z}_{23\mathcal{B}}^+$ be the associated moduli space of oriented surfaces with 
$2\le\kappa(\mathcal{M}_C^\pm)\le 3$. Let
$$
\begin{array}{l}
\mathfrak{G}:=\{T:
T(x^1,x^2)=(ax^1,bx^1+cx^2+d)\text{ for }a>0\text{ and }c\ne0\},\\[0.05in]
\mathfrak{I}:=\{T_{b,c}\in\mathfrak{G}:T_{b,c}(x^1,x^2)=(x^1,bx^1+cx^2)\text{ for }c\ne0\},\\[0.05in]
\mathfrak{I}^+:=\{T_{b,c}\in\mathfrak{I}\text{ for }c>0\}\,.
\end{array}$$
Our previous discussion in \cite{BGGP16} permits us to identify 
$$
\mathfrak{Z}_{23\mathcal{B}}=\mathcal{Z}_{23\mathcal{B}}/\mathfrak{I}\text{ and }
\mathfrak{Z}_{23\mathcal{B}}^+=\mathcal{Z}_{23\mathcal{B}}/\mathfrak{I}^+\,.
$$
This is a non-trivial assertion if $\kappa(\mathcal{M})=3$ as there are non-linear affine transformations.
However, they play no role in defining the affine isomorphism type.
In Lemma~\ref{L5.1}, we will define several invariant tensors on $\mathfrak{Z}_{23\mathcal{B}}$
that will play a crucial role in our subsequent discussion.  In Lemma~\ref{L5.2}, we will examine the
isotropy subgroup of the natural action of $\mathfrak{I}^+$ on $ \mathcal{Z}_{23B}$, we will
show that $\mathfrak{Z}_{23\mathcal{B}}$ is a 4-dimensional real analytic manifold, and we will
prove the natural projection $\pi^+$ from $\mathcal{Z}_{23\mathcal{B}}$ to $\mathfrak{Z}_{23\mathcal{B}}^+$ is a real-analytic $\mathfrak{J}^+$ fiber bundle.
In Lemma~\ref{L5.3}, we show that $\mathfrak{Z}_{23\mathcal{B}}^+$ is simply connected and
that the second Betti number is 2.
In Theorem~\ref{T5.4} we complete the proof of Theorem~\ref{T1.10}
by giving $\mathfrak{Z}_{23\mathcal{B}}$ a real analytic structure, by
showing that the projection from $\mathfrak{Z}_{23\mathcal{B}}^+$ to 
$\mathfrak{Z}_{23\mathcal{B}}$
is a $\mathbb{Z}_2$ branched cover where the ramification set is a real analytic sub-manifold of
co-dimension 2, and by demonstrating that $\mathfrak{Z}_{23\mathcal{B}}^+$ is simply connected and
has second Betti number equal to 1.

\begin{lemma}\label{L5.1}
The following tensors are invariantly defined on $\mathfrak{Z}_{23\mathcal{B}}$:
\begin{eqnarray*}
&&\rho_1:=\textstyle\frac1{x^1}
\{\Gamma_{12}{}^2dx^1\otimes dx^1+\Gamma_{22}{}^2dx^1\otimes dx^2
-\Gamma_{12}{}^1dx^2\otimes dx^1-\Gamma_{22}{}^1dx^2\otimes dx^2\},\nonumber\\
&&\rho_2:=\Gamma_{ij}{}^k\Gamma_{kl}{}^ldx^i\otimes dx^j,\quad
\rho_3:=\Gamma_{ik}{}^l\Gamma_{jl}{}^kdx^i\otimes dx^j,\quad
\rho_0:=\Gamma_{ij}{}^jdx^i,\\
&&\rho_4:=\Gamma_{ij}{}^kC_{ak}{}^jC_{bc}{}^idx^a\otimes dx^b\otimes dx^c\,.\nonumber
\end{eqnarray*}
\end{lemma}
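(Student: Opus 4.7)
My plan is to show that each listed tensor is invariant under the action of $\mathfrak{I}$ on $\mathcal{Z}_{23\mathcal{B}}$, so that it descends to a well-defined object on the quotient $\mathfrak{Z}_{23\mathcal{B}}$. The essential observation is that every $T_{b,c}\in\mathfrak{I}$ is affine linear in the coordinates $(x^1,x^2)$, so the inhomogeneous second-derivative term in the transformation law for the Christoffel symbols vanishes and $\Gamma_{ij}{}^k$ transforms as an ordinary $(1,2)$-tensor under each such coordinate change.

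From this it follows immediately that the complete index contractions $\rho_0=\Gamma_{ij}{}^j\,dx^i$, $\rho_2=\Gamma_{ij}{}^k\Gamma_{kl}{}^l\,dx^i\otimes dx^j$, and $\rho_3=\Gamma_{ik}{}^l\Gamma_{jl}{}^k\,dx^i\otimes dx^j$ are invariantly defined tensor fields. For $\rho_4$ I would additionally remark that the coordinate function $x^1$ is literally preserved by every $T_{b,c}\in\mathfrak{I}$, so the assignment $C_{ij}{}^k:=x^1\,\Gamma_{ij}{}^k$ makes sense as a globally defined $(1,2)$-tensor field on $\mathbb{R}^+\times\mathbb{R}$; the formula for $\rho_4$ is then a double contraction of $C$ with itself and with $\Gamma$ and is therefore also an invariantly defined tensor.

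The subtlest case, and the main obstacle, is $\rho_1$, since its formula involves the coordinate-dependent factor $(x^1)^{-1}$ and fixes one of the lower indices to be $2$ rather than summing it out. The plan is to exploit two features peculiar to $\mathfrak{I}$: first, since $\tilde x^1=x^1$, both the $1$-form $dx^1$ and the function $(x^1)^{-1}$ are literally invariant; second, $\mathfrak{I}$ preserves the line field spanned by $\partial_{x^2}$, with the explicit scaling $dT_{b,c}(\partial_{x^2})=c\,\partial_{x^2}$. A convenient reformulation is to recognize the bracketed combination as $\epsilon_{i\alpha}\,\Gamma_{j2}{}^{\alpha}\,dx^i\otimes dx^j$, with $\epsilon_{12}=1=-\epsilon_{21}$, and then to verify by a direct computation that under $(x^1,x^2)\mapsto(x^1,bx^1+cx^2)$ the $b$-dependent terms coming from $d\tilde x^2=b\,dx^1+c\,dx^2$ cancel exactly against the $b$-dependent contributions in the transformed Christoffel symbols $\tilde\Gamma_{j2}{}^{\alpha}$, while the residual $c$-factors from $\epsilon$, from the distinguished direction $\partial_{x^2}$, and from $dx^2$ combine to cancel one another. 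This will reduce the lemma to a routine but not entirely trivial bookkeeping exercise with the explicit transformation laws of the Christoffel symbols under the two-parameter group $\mathfrak{I}$.
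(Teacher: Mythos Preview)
Your treatment of $\rho_0$, $\rho_2$, $\rho_3$, and $\rho_4$ matches the paper's: since every $T_{b,c}\in\mathfrak{I}$ is linear, $\Gamma$ transforms as a genuine $(1,2)$-tensor, and (as you note for $\rho_4$) so does $C=x^1\Gamma$ because $x^1$ is invariant under $\mathfrak{I}$; hence any full contraction is invariant.

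Where you diverge from the paper is on $\rho_1$. You propose a direct coordinate computation, tracking the $b$- and $c$-dependent terms in the transformed Christoffel symbols and checking the cancellations by hand. That would certainly work, but the paper bypasses it entirely with a one-line trick: for a Type~$\mathcal{B}$ connection the Ricci tensor decomposes as
\[
\rho \;=\; \rho_1 + \rho_2 - \rho_3,
\]
which one sees by computing the derivative terms in the curvature (only $\partial_{x^1}\Gamma_{jk}{}^l=-(x^1)^{-2}C_{jk}{}^l$ is nonzero) and separating the quadratic $\Gamma\Gamma$ terms. Since $\rho$ is intrinsically invariant and $\rho_2,\rho_3$ are already known to be invariant under $\mathfrak{G}$, the invariance of $\rho_1=\rho-\rho_2+\rho_3$ follows immediately, with no bookkeeping. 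Your route is perfectly sound but more labor-intensive; the paper's identity explains \emph{why} the cancellations you anticipate must occur and makes the invariance manifest.
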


\begin{proof} 
Since contracting an upper index against a lower index is an affine invariant,
$\rho_i$ for $i\ne1$ is an affine invariant and hence invariant under $\mathfrak{G}$.
Since we may express $\rho=\rho_1+\rho_2-\rho_3$, we may
conclude $\rho_1$ is invariant
under the action of $\mathfrak{G}$
although not under the action of $\operatorname{GL}(2,\mathbb{R})$.
Since $\mathfrak{Z}_{23\mathcal{B}}=\mathcal{Z}_{23\mathcal{B}}/\mathfrak{I}$, the
desired result follows.\end{proof}

Pull-back defines an action of 
$\mathfrak{I}$ on $\mathbb{R}^6$ which may be described as follows:
\medbreak\quad
$(T_{b,c})_*\partial_{x^1}=\partial_{x^1}+b \partial_{x^2},\quad(T_{b,c})_*\partial_{x^2}=c\partial_{x^2}$,
\smallbreak\quad
$(T_{b,c}^*C)_{11}{}^1=C_{11}{}^1+2bC_{12}{}^1+b^2C_{22}{}^1$,
\smallbreak\quad
$(T_{b,c }^*C)_{11}{}^2=\frac{C_{11}{}^2+b (2 C_{12}{}^2-C_{11}{}^1)+b^2 (C_{22}{}^2-2 C_{12}{}^1)-b^3 C_{22}{}^1}{c}$,
\smallbreak\quad
$(T_{b,c}^*C)_{12}{}^1=cC_{12}{}^1+bcC_{22}{}^1$,
\smallbreak\quad
$(T_{b,c}^*C)_{12}{}^2=C_{12}{}^2+bC_{22}{}^2-b(C_{12}{}^1+bC_{22}{}^1)$,
\smallbreak\quad
$(T_{b,c}^*C)_{22}{}^1=c^2C_{22}{}^1$, and $(T_{b,c}^*C)_{22}{}^2=cC_{22}{}^2-bcC_{22}{}^1$.
\medbreak\noindent
Let $\mathfrak{I}^C:=\{T_{b,c}:T_{b,c}^*C=C\}$ and $\mathfrak{I}^{+,C}:=\mathfrak{I}^C\cap\mathfrak{I}^+$ be the isotropy subgroups.

\begin{lemma}\label{L5.2}
Adopt the notation established above.
\begin{enumerate}
\item If $\dim\{\mathfrak{K}(\mathcal{M}_C)\}=4$, then $\mathfrak{I}^{+,C}$ is non-trivial.
\item If $\kappa(\mathcal{M}_C)\le3$, then $\mathfrak{I}^{+,C}$ is trivial and
the following conditions are equivalent:
\begin{enumerate}
\item $\mathfrak{I}^C$ is non-trivial.
\item $(C_{11}{}^2,C_{12}{}^1,C_{22}{}^2)=(0,0,0)$.
\item $\mathcal{M}_C$ is amphichiral, i.e. $\mathcal{M}^+_C\approx\mathcal{M}^-_C$.
\end{enumerate}

\item $\mathfrak{Z}_{23\mathcal{B}}^+$ is a 4-dimensional real analytic manifold without boundary.
The natural
projection from $\mathcal{Z}_{23\mathcal{B}}$ to $\mathfrak{Z}_{23\mathcal{B}}^+$ 
is a real-analytic principal $\mathfrak{I}^+$ fiber bundle.
\end{enumerate}\end{lemma}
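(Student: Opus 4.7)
Plan: I would establish (2) first, as its triviality conclusion feeds into (3), working throughout with the explicit action formulas for $T_{b,c}$ on $C$ displayed before the lemma. For the triviality of $\mathfrak{I}^{+,C}$ when $\kappa\le 3$, suppose $T_{b,c}\in\mathfrak{I}^{+,C}$ is non-identity. Splitting into the cases $c\ne 1$ and $c=1$ with $b\ne 0$, the equations $(T^*C)_{22}{}^1=c^2 C_{22}{}^1$, $(T^*C)_{22}{}^2=c(C_{22}{}^2-bC_{22}{}^1)$, $(T^*C)_{12}{}^1=c(C_{12}{}^1+bC_{22}{}^1)$, together with $(T^*C)_{11}{}^1=C_{11}{}^1+2bC_{12}{}^1+b^2 C_{22}{}^1$ in the second case, force $C_{22}{}^1=C_{22}{}^2=C_{12}{}^1=0$. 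Under this vanishing, a direct calculation of the Killing equation $\mathcal{L}_X\nabla=0$ for $X=h(x^1,x^2)\partial_{x^2}$ reduces to $h_{12}=h_{22}=0$ together with the second-order linear ODE $x^1 A''(x^1)+(2C_{12}{}^2-C_{11}{}^1)A'(x^1)=C_{11}{}^2 B$ for $h=A(x^1)+Bx^2$; this yields a 3-parameter family of solutions, giving three linearly independent affine Killing vector fields of this form (including non-linear ones involving $\log x^1$ or a power of $x^1$). Together with the linearly independent ax+b generator $x^1\partial_{x^1}+x^2\partial_{x^2}$ one obtains four affine Killing vector fields, so $\kappa=4$ by the theorem preceding the lemma. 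Contrapositively, $\kappa\le 3$ gives $\mathfrak{I}^{+,C}=\{\text{id}\}$.

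For the three-way equivalence in (2): (b)$\Rightarrow$(a) is the direct check $T_{0,-1}^*C=C$ when $(C_{11}{}^2,C_{12}{}^1,C_{22}{}^2)=(0,0,0)$. For (a)$\Rightarrow$(b), observe that $(T_{b,-1})^2=T_{0,1}$ and any two non-identity elements $T_{b_1,-1},T_{b_2,-1}\in\mathfrak{I}^C$ would satisfy $T_{b_1,-1}\circ T_{b_2,-1}=T_{b_1-b_2,1}\in\mathfrak{I}^{+,C}=\{\text{id}\}$, forcing $b_1=b_2$; hence $\mathfrak{I}^C=\{\text{id},T_{b_0,-1}\}$ for a unique $b_0$. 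The conjugation identity $T_{b_0/2,1}^{-1}\,T_{b_0,-1}\,T_{b_0/2,1}=T_{0,-1}$ in $\mathfrak{I}$ shows that replacing $C$ by the $\mathfrak{I}^+$-equivalent $(T_{b_0/2,1})^*C$ gives a representative fulfilling~(b). Finally, (a)$\Leftrightarrow$(c): amphichirality of $\mathcal{M}_C$ is precisely the existence of an orientation-reversing affine automorphism, which under the identifications $\mathfrak{Z}_{23\mathcal{B}}=\mathcal{Z}_{23\mathcal{B}}/\mathfrak{I}$ and $\mathfrak{Z}_{23\mathcal{B}}^+=\mathcal{Z}_{23\mathcal{B}}/\mathfrak{I}^+$ corresponds to an element of $\mathfrak{I}\setminus\mathfrak{I}^+$ fixing $C$; with $\mathfrak{I}^{+,C}$ trivial this is just non-triviality of $\mathfrak{I}^C$.

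For (1), $\kappa=4$ implies $\mathcal{M}_C$ is also Type~$\mathcal{A}$ by the theorem preceding the lemma; passing through Type~$\mathcal{A}$ coordinates and then returning to Type~$\mathcal{B}$ coordinates via the $\mathfrak{G}$-action produces a Type~$\mathcal{B}$ representative $C'$ in the $\mathfrak{I}^+$-orbit of $C$ with $C'_{22}{}^1=C'_{22}{}^2=C'_{12}{}^1=0$. The Killing analysis above in the vanishing case then exhibits a linear Killing vector of $\mathfrak{I}^+$-type, furnishing a non-identity element of $\mathfrak{I}^{+,C'}$ and hence of $\mathfrak{I}^{+,C}$. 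For (3), by (1)--(2) the $\mathfrak{I}^+$-action on $\mathcal{Z}_{23\mathcal{B}}$ is free. On the open set $\{C_{22}{}^1>0\}$ the unique $T_{b,c}\in\mathfrak{I}^+$ with $c=(C_{22}{}^1)^{-1/2}$ and $b=-C_{12}{}^1/C_{22}{}^1$ carries $C$ onto the real-analytic slice $\{C_{22}{}^1=1,\,C_{12}{}^1=0\}$, giving a real-analytic section over this open subset. Analogous sections treat $\{C_{22}{}^1<0\}$ and, using the next-non-zero tensor component (or equivalently, one of the invariant tensors $\rho_i$ of Lemma~\ref{L5.1}, whose formulas provide $\mathfrak{I}$-invariant real-analytic functions), the locus $\{C_{22}{}^1=0\}$. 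Patching these trivializations yields the principal $\mathfrak{I}^+$-bundle structure on $\mathcal{Z}_{23\mathcal{B}}\to\mathfrak{Z}_{23\mathcal{B}}^+$ and endows the base with the structure of a 4-dimensional real-analytic manifold. The main technical obstacle is coordinating the local sections across $\{C_{22}{}^1=0\}$ and verifying the resulting transition maps are real analytic; this will reduce to polynomial manipulations via the action formulas and the explicit form of the invariants $\rho_i$.
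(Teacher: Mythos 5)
Your treatment of Assertions (1) and (2) follows essentially the paper's route: the same case analysis on the explicit $T_{b,c}$ action forces $(C_{12}{}^1,C_{22}{}^1,C_{22}{}^2)=(0,0,0)$ whenever the isotropy in $\mathfrak{I}^+$ is non-trivial, which contradicts $\kappa\le3$. Two remarks. First, your handling of (a)$\Rightarrow$(b) via the conjugation $T_{b_0/2,1}^{-1}T_{b_0,-1}T_{b_0/2,1}=T_{0,-1}$ is actually more careful than the paper, which jumps from $T^2=\operatorname{id}$ to $T=(x^1,-x^2)$ even though every $T_{b,-1}$ is an involution; you correctly observe the condition (b) is achieved only after passing to an $\mathfrak{I}^+$-equivalent representative. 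Second, your proof of (1) takes an unjustified detour: the claim that ``passing through Type~$\mathcal{A}$ coordinates and back'' produces an $\mathfrak{I}^+$-equivalent $C'$ with $C'_{12}{}^1=C'_{22}{}^1=C'_{22}{}^2=0$ is not argued, and the subsequent step from ``a Killing vector of $\mathfrak{I}^+$-type exists'' to ``$\mathfrak{I}^{+,C}$ is non-trivial'' needs the field to integrate to a flow inside $\mathfrak{I}^+$. The paper instead cites directly that $\kappa=4$ forces $C_{12}{}^1=C_{22}{}^1=C_{22}{}^2=0$ for the given $C$ and then exhibits the non-trivial isotropy by solving $(T_{b,c}^*C)_{11}{}^2=C_{11}{}^2$ explicitly in three sub-cases; you should do the same.

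The genuine gap is in Assertion (3). A slice for a quotient chart must be defined over an $\mathfrak{I}^+$-\emph{invariant open} subset of $\mathcal{Z}_{23\mathcal{B}}$; your two charts $\{C_{22}{}^1>0\}$ and $\{C_{22}{}^1<0\}$ qualify (they coincide in substance with the paper's $\mathcal{O}_1^\pm$, since $\rho_1(\partial_2,\partial_2)=-C_{22}{}^1/x^1$), but ``the locus $\{C_{22}{}^1=0\}$'' is a closed codimension-one set and a section over it does not produce a chart on the quotient. You must exhibit invariant open neighborhoods of every point with $C_{22}{}^1=0$ carrying real-analytic slices. The paper does this with $\mathcal{O}_0^\pm=\{\pm(C_{12}{}^1+C_{22}{}^2)>0\}$ (using $\rho_0(\partial_2)$, which is linear and equivariant of weight $c$), and is then still left with the exceptional set $\mathcal{E}=\{C_{22}{}^1=0,\ C_{12}{}^1+C_{22}{}^2=0\}\cap\mathcal{Z}_{23\mathcal{B}}$, on which $C_{12}{}^1\ne0$ but every linear equivariant quantity that could fix the gauge vanishes. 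Covering $\mathcal{E}$ is the hard step: the paper must resort to the purely quadratic invariant $\rho_3$, normalize $\rho_3(\partial_1,\partial_2)=0$ and $C_{12}{}^1=\pm1$, verify the resulting set $\tilde O_3$ is open and meets each $\mathfrak{I}^+$-orbit at most once, and only then conclude $\Psi^\pm$ is an equivariant diffeomorphism onto an open set containing $\mathcal{E}$. Your proposal neither identifies $\mathcal{E}$ nor explains how a ``next-non-zero tensor component'' yields an \emph{open} invariant chart there; deferring this as ``the main technical obstacle'' leaves unproved exactly the content of Assertion (3).
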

 
 \begin{proof} 
If $\kappa(\mathcal{M}_C)=4$, then
$C_{12}{}^1=C_{22}{}^1=C_{22}{}^2=0$  (see \cite{BGGP16}). We have
\begin{eqnarray*}
&&(T_{b,c}^*C)_{12}{}^2=C_{12}{}^2,\quad (T_{b,c}^*C)_{11}{}^1=C_{11}{}^1,\\
&&(T_{b,c }^*C)_{11}{}^2=c^{-1}\{C_{11}{}^2+2bC_{12}{}^2-bC_{11}{}^1\}\,.
\end{eqnarray*}
We distinguish cases:
\begin{enumerate}
\item If $C_{11}{}^2=0$ and if $2C_{12}{}^2-C_{11}{}^1=0$, then $\mathfrak{I}^{+,C}=\mathfrak{I}^+$.
\item If $C_{11}{}^2\ne0$ and if $2C_{12}{}^2-C_{11}{}^1=0$, then $\mathfrak{I}^{+,C}=\{T_{b,1}\}$.
\item If $2C_{12}{}^2-C_{11}{}^1\ne0$, then $\mathfrak{I}^{+,C}$ is defined by
$b=(c-1)C_{11}{}^2\{2C_{12}{}^2-C_{11}{}^1\}^{-1}$.
\end{enumerate}

For the remainder of the proof, we suppose that
$2\le\kappa(\mathcal{M}_C)\le3$ or, equivalently, that
 $(C_{12}{}^1,C_{22}{}^1,C_{22}{}^2)\ne(0,0,0)$.
We first show that $(T_{b,c}^*C)=C$ implies $c=1$ and $b=0$.
\begin{enumerate}
\item Suppose $C_{22}{}^1\ne0$. As 
$(T_{b,c}^*C)_{22}{}^1=C_{22}{}^1$ and $c>0$, we have $c=1$. 
Setting $(T_{b,c}^*C)_{12}{}^1=C_{12}{}^1$ then implies 
$b=0$.
\item Suppose $C_{22}{}^1=0$ but $C_{12}{}^1\ne0$. As 
$(T_{b,c}^*C)_{12}{}^1=C_{12}{}^1$, we have $c=1$.
Setting $(T_{b,c}^*)C_{11}{}^1=C_{11}{}^1$ then yields $b=0$.
\item Suppose $C_{22}{}^1=0$ and $C_{12}{}^1=0$
but $C_{22}{}^2\ne0$. As
$(T_{b,c}^*C)_{22}{}^2=C_{22}{}^2$, we have that $c=1$. Setting
$(T_{b,c}^*C)_{12}{}^2=C_{12}{}^2$ then implies $b=0$.
\end{enumerate}
Suppose $T\in\mathfrak{J}^C$. Then $T^2\in\mathfrak{J}^{+,C}$ and  
$T^2=\operatorname{id}$. Hence if $T\ne\operatorname{id}$, then 
$T(x^1,x^2)=(x^1,-x^2)$. Thus $\mathfrak{J}^C$ is non-trivial if and only if $T^*C=C$, i.e.
$C_{11}{}^2=C_{12}{}^1=C_{22}{}^2=0$. This establishes the equivalence of Assertion~2-a and
Assertion~2-b. The equivalence with Assertion~2-c is then immediate.

To prove Assertion~3,  we must construct real analytic charts on $\mathfrak{Z}_{23\mathcal{B}}^+$
and show the natural projection $\pi^+$ from $\mathcal{Z}_{23\mathcal{B}}$ to
$\mathfrak{Z}_{23\mathcal{B}}^+$ defines a real analytic principal fiber bundle. Let
\begin{eqnarray*}
	&&\mathcal{O}_0^\pm:=\{C\in\mathcal{Z}_{23\mathcal{B}}:\pm\rho_0(\partial_2)>0\},\\
	&&\mathcal{O}_1^\pm:=\{C\in\mathcal{Z}_{23\mathcal{B}}:\pm\rho_1(\partial_2,\partial_2)>0\}.
\end{eqnarray*}
Since $(T_{b,c})_*(\partial_2)=c\partial_2$, these sets are open subsets of $\mathbb{R}^6$
which are invariant under the 
action of $\mathfrak{I}^+$.  If $\rho_0(C)(\partial_2)=0$ and $\rho_1(C)(\partial_2,\partial_2)=0$, then 
$C_{12}{}^1+C_{22}{}^2=0$ and $C_{22}{}^1=0$ and thus $C_{12}{}^1\ne0$ since
$C\in\mathcal{Z}_{23\mathcal{B}}$.
Let
\begin{eqnarray}
	&&\mathcal{E}:=\mathcal{Z}_{23\mathcal{B}}\cap\{\mathcal{O}_0^+\cup\mathcal{O}_0^-
	\cup\mathcal{O}_1^+\cup\mathcal{O}_1^-\}^c\nonumber\\
	&&\phantom{.a.}=\{C:C_{12}{}^1+C_{22}{}^2=C_{22}{}^1=0\}\cap\mathcal{Z}_{23\mathcal{B}}\label{5.31}\\
	&&\phantom{.a.}\subset\{C:C_{12}{}^1\ne0\}\nonumber
\end{eqnarray}
be the exceptional set. We will construct equivariant
charts covering $\mathcal{E}$ when we discuss Case 5.1.3 below; the description is a bit
complicated so we postpone a precise definition until that time.
\subsection*{\bf Case 5.1.1. Let $C\in\mathcal{O}_0^\pm$} 
We have $\rho_0(C)(\partial_{x^2})\ne0$. Let 
$$b=b(C):=\mp\rho_0(C)(\partial_{x^1})\text{ and }c=c(C):=|\rho_0 (C)(\partial_{x^2})|^{-1}\,.$$
Let $\tilde C=\tilde C(C):=T_{b,1}^*T_{0,c}^*(C)$. We then have the relations
\begin{equation}\label{Eq5.a}
	\rho_{0,\tilde C}(\partial_{x^1})=0\text{ and }\rho_{0,\tilde C}(\partial_{x^2})=\pm1\,.
\end{equation}
Furthermore, $T(C):=T_{b,0}T_{0,c}\in\mathfrak{I}^+$ is uniquely determined by the
requirement that $\tilde C:=T(C)^*(C)$ satisfies the relations of Equation~(\ref{Eq5.a}), i.e.
$\tilde C_{12}{}^1+\tilde C_{22}{}^2=\pm1$ and $\tilde C_{11}{}^1+\tilde C_{12}{}^2=0$.
Introduce local coordinates by
setting:
$$\begin{array}{lll}
\tilde C_{11}{}^1=z_0^1,&
\tilde C_{11}{}^2=z_0^2,&
\tilde C_{12}{}^1=z_0^3,\\[0.05in]
\tilde C_{12}{}^2=-z_0^1,&
\tilde C_{22}{}^1=z_0^4,&
\tilde C_{22}{}^2=\pm1-z_0^3.
\end{array}$$
This shows that $\mathcal{O}_0^\pm\rightarrow\pi^+(\mathcal{O}_0^\pm)$ 
is a $\mathfrak{I}^+$ principal bundle where $\mathfrak{U}_0^\pm:=\pi^+(\mathcal{O}_0^\pm)$
can be identified with the open subset of $\mathbb{R}^4$ where $\rho_0(\tilde C(\vec z_0))\ne0$. 
Since $\tilde C_{12}{}^1+\tilde C_{22}{}^2\ne0$,
$2\le\kappa(\tilde C(\vec z_0))\le3$. 

\subsection*{\bf Case 5.1.2. Let $C\in\mathcal{O}_1^\pm$} 
We apply the same argument as that given above to see there is a unique $T(C)\in\mathfrak{I}^+$
so that $\tilde C:=T(C)^*C$ satisfies $\rho_{1,\tilde C}(\partial_{x^2}, \partial_{x^2})=\pm1$ and
$\rho_{1,\tilde C}^s(\partial_{x^1},\partial_{x^2})=0$, i.e.
$\tilde C_{22}{}^1=\mp1$ and $\tilde C_{22}{}^2-\tilde C_{12}{}^1=0$.
We introduce local coordinates by setting:
$$\begin{array}{lll}
\tilde C_{11}{}^1=z_1^1,&
\tilde C_{11}{}^2=z_1^2,&
\tilde C_{12}{}^1=z_1^3,\\[0.05in]
\tilde C_{12}{}^2=z_1^4,&
\tilde C_{22}{}^1=\mp 1,&
\tilde C_{22}{}^2=z_1^3.
\end{array}$$
This shows that $\mathcal{O}_1^\pm\rightarrow\pi^+(\mathcal{O}_1^\pm)$ 
is a $\mathfrak{I}^+$ principal bundle where $\mathfrak{U}_1^\pm=\pi^+(\mathcal{O}_1^\pm)$
can be identified  with the open subset of $\mathbb{R}^4$ where $\rho_1(\tilde C(\vec z_1))\ne0$. Since $\tilde C_{22}{}^1\ne0$,
$2\le\kappa(\tilde C(\vec z_1))\le3$. 

\subsection*{Case 5.1.3} The final charts $\mathcal{O}_3^\pm$ are a bit more complicated to define as $\rho_3$
is purely quadratic and does not contain any linear terms.
We adopt the notation of Equation~(\ref{5.31}) and consider the exceptional set
$$\mathcal{E}:=\{C:C_{12}{}^1\ne0, C_{22}{}^2=-C_{12}{}^1, C_{22}{}^1=0\}\,.$$
We wish to fix the gauge. Let
$\tilde{\mathcal{E}}:=\{C\in\mathcal{E}:C_{12}{}^1=\pm1\text{ and }C_{11}{}^1=0\}$.
If $C\in\mathcal{E}$, let $b=b(C):=\mp\frac12 C_{11}{}^1$, let $c=c(C):=1$, and let
$\bar C=T_{b,c}^*C$. The calculations performed
in the proof of Lemma~\ref{L5.1} then show $\bar C_{11}{}^1=0$ and
$\bar C_{12}{}^1=\operatorname{sign}(C_{12}{}^1)$ or, 
equivalently, $\rho_3^{\bar C}(\partial_{x^1},\partial_{x^2})=0$
and $\rho_3^{\bar C}(\partial_{x^2},\partial_{x^2})=2{(x^1)^{-2}}$. Furthermore, $(b,c)$ are uniquely
specified by these equations. Thus
$\mathcal{E}=\tilde{\mathcal{E}}\cdot\mathfrak{I}^+$.
We use this as our ansatz; we wish to ensure $\rho_3(\partial_1,\partial_2)=0$ and
$C_{12}{}^1=\pm1$.
Let $\vec z_3:=(z_3^1,z_3^2,z_3^3,z_3^4)\in\mathbb{R}^4$, define $\tilde C^\pm(\vec z)$ by setting:
\begin{eqnarray*}
	&&\tilde C_{11}^\pm{}^2(\vec z_3):=
	z_3^1,\ \ \tilde C_{12}^\pm{}^2(\vec z_3):=z_3^2,\ \ \tilde C_{22}^\pm{}^1:=z_3^3,
	\ \ \tilde C_{22}^\pm{}^2:=z_3^4,\ \ \tilde C_{12}^\pm{}^1(\vec z_3)=\pm1,\\
	&&\tilde C_{11}^\pm{}^1(\vec z_3):=
	-\tilde C_{12}^\pm{}^2(\vec z_3)\mp\{\tilde C_{11}^\pm{}^2(\vec z_3)\tilde C_{22}^\pm{}^1(\vec z_3)
	+\tilde C_{12}^\pm{}^2(\vec z_3)\tilde C_{22}^\pm{}^2(\vec z_3)\}\,.
\end{eqnarray*}
Since $\tilde C_{12}^\pm{}^1(\vec z_3)=\pm1$, $2\le\kappa(\tilde C^\pm(\vec z_3))\le 3$ and
$\tilde C^\pm(\vec z_3)\in\mathcal{Z}_{23\mathcal{B}}$. We compute
\begin{eqnarray*}
	&&\rho_3(\partial_{x^1},\partial_{x^2})=0,\\
	&&\rho_3(\partial_{x^2},\partial_{x^2})
	=1+2C_{12}{}^2C_{22}{}^1+C_{22}{}^2C_{22}{}^2=1+2z_3^2z_3^3+z_3^4z_3^4\,.
\end{eqnarray*}
Let $\tilde{O}_3:=\{\vec z_3:1+2z_3^2z_3^3+z_3^4z_3^4>0\}$. Then
$\tilde{C}(\tilde{O}_3)$ is a neighborhood of $\tilde{\mathcal{E}}$. Suppose
$$
T_{b,c}^*\{\tilde C(\tilde O_3)\}\cap\tilde C(\tilde O_3)\ne\emptyset\,.
$$
If $b\ne0$, then  $\rho_3((T_{b,c})_*\partial_{x^1},(T_{b,c})_*\partial_{x^2})\ne0$. Consequently
$b=0$. To ensure $(T_{b,c}^*C)_{12}{}^1=\pm1$, we then need $c=1$. 
Consequently exactly as in the special case that $C\in\mathcal{E}$, $b$ and $c$ are determined.
This shows that
\begin{equation}\label{5.c}
	T_{b,c}^*\{\tilde C(\tilde O_3)\}\cap\tilde C(\tilde O_3)=\emptyset\text{ for }(b,c)\ne(0,1)\,.
\end{equation}
Clear the previous notation and set
$\Psi^\pm(\vec z_3,T_{b,c}):=T_{b,c}^*\tilde C^\pm(\vec z_3)$. The argument given above
to establish Equation~(\ref{5.c})
shows that $\Psi^\pm$ is a $\mathfrak{I}^+$ equivariant
diffeomorphism from $\tilde{O}_3\times\mathfrak{I}^+$ to
an open subset $\mathcal{O}_3^\pm$ of $\mathbb{R}^6$ which contains $\mathcal{E}$.
This provides the missing charts over which $\pi$ is
a principal bundle. 

We have constructed coordinate charts 
$\{\mathfrak{U}_0^\pm,\mathfrak{U}_1^\pm,\mathfrak{U}_3^\pm\}$
on $\mathfrak{Z}_{23\mathcal{B}}^+$ so that
$\pi^+$ admits a section and thus the bundle is trivial over these charts.
The construction is in the real analytic category
and the transition functions between coordinate charts are real analytic. 
This completes the proof of Assertion~3.
\end{proof}

\begin{remark}
\rm
A special case of the affine structures $C$  with $\kappa(\mathcal{M}_C)\leq 3$ and non-trivial $\mathfrak{J}^C$, as discussed in Lemma \ref{L5.2}-(2), is given by
$$
C_{11}^1=C_{12}^2=\pm C_{22}^1,\quad \mbox{and}\quad
C_{11}{}^2=C_{12}{}^1=C_{22}{}^2=0.
$$
These affine structures correspond to the only non-flat homogeneous affine surfaces that are metrizable and the associated pseudo-Riemannian metric is non-homogeneous \cite{KVOp}.
\end{remark}

Let
\begin{eqnarray*}
&&K:=\{C\in\mathbb{R}^6:(C_{12}{}^1,C_{22}{}^1,C_{22}{}^2)=(0,0,0)\},\\
&&P^\pm:=(C_{11}{}^1=1,C_{11}{}^2=0,C_{12}{}^1=0,C_{12}{}^2=0,
C_{22}{}^1=\pm1,C_{22}{}^2=0),\\
&&K^\pm(b,c):=\left\{\begin{array}{rrr}
C_{11}{}^1=1\pm b^2,&C_{11}{}^2=-bc^{-1}(1\pm b^2),&C_{12}{}^1=\pm bc,\\[0.05in]
C_{12}{}^2=\mp b^2,&C_{22}{}^1=\pm c^2,& C_{22}{}^2=\mp bc
\end{array}\right\}\,.
\end{eqnarray*}
Let $K^\pm:=K^\pm(\mathbb{R}\times\mathbb{R}^+)$; a direct computation shows 
$K^\pm=P^\pm\cdot\mathfrak{I}^+$.
\begin{lemma}\label{L5.3}
\ \begin{enumerate}
\item $\mathcal{Z}_{23\mathcal{B}}=\mathbb{R}^6-K-K^+-K^-$.
\item $\mathfrak{Z}_{23\mathcal{B}}^+$ is simply connected and
$\pi_2(\mathfrak{Z}_{23\mathcal{B}}^+)=\mathbb{Z}$.
\item $H_{\text{DeR}}^1(\mathfrak{Z}_{23\mathcal{B}}^+)=0$ and
$H_{\text{DeR}}^2(\mathfrak{Z}_{23\mathcal{B}}^+)=\mathbb{R}$.
\end{enumerate}\end{lemma}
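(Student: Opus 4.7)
The plan is to establish the three assertions in sequence, with the first requiring most of the computation and the latter two following by fairly standard topological arguments built on the principal bundle structure of Lemma~\ref{L5.2}(3).

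For Assertion (1), the idea is to identify each of the three removed sets geometrically. By Theorem 1.9 and the results of \cite{BGGP16}, $\kappa(\mathcal{M}_C)=4$ precisely when $(C_{12}{}^1,C_{22}{}^1,C_{22}{}^2)=(0,0,0)$, i.e., when $C\in K$; thus $\mathcal{Z}_{23\mathcal{B}}$ is the set of non-flat $C\in\mathbb{R}^6$ outside $K$. It remains to identify the flat locus outside $K$ with $K^+\cup K^-$. A direct computation shows that $\rho(\mathcal{M}_{P^\pm})=0$, and since flatness is $\mathfrak{I}^+$-invariant the full orbits $K^\pm=P^\pm\cdot\mathfrak{I}^+$ consist of flat structures. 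Conversely, I would use the explicit formulas for the $\mathfrak{I}^+$-action from the proof of Lemma~\ref{L5.1} to normalize an arbitrary flat $C$ with $C_{22}{}^1\neq 0$ to have $C_{22}{}^1=\pm 1$, and then solve the vanishing Ricci equations to recover precisely $P^\pm$; the residual case $C_{22}{}^1=0$ with $(C_{12}{}^1,C_{22}{}^2)\neq 0$ is handled separately and yields no new flat solutions not already in $K$. Pairwise disjointness of $K,K^+,K^-$ is automatic from the sign of $C_{22}{}^1$.

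For Assertion (2), Lemma~\ref{L5.2}(3) provides a principal $\mathfrak{I}^+$-bundle $\pi^+\colon\mathcal{Z}_{23\mathcal{B}}\to\mathfrak{Z}_{23\mathcal{B}}^+$. Since $\mathfrak{I}^+\cong\mathbb{R}\times\mathbb{R}^+$ is contractible, its long exact sequence identifies $\pi_k(\mathfrak{Z}_{23\mathcal{B}}^+)\cong\pi_k(\mathcal{Z}_{23\mathcal{B}})$ in every degree. Now $K\subset\mathbb{R}^6$ is a 3-dimensional linear subspace, so $\mathbb{R}^6\setminus K$ deformation retracts onto the unit sphere in a complementary 3-plane; this is an $S^2$ with $\pi_1=0$ and $\pi_2=\mathbb{Z}$. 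The subsets $K^\pm$ are smooth 2-dimensional submanifolds of $\mathbb{R}^6$, closed in $\mathbb{R}^6\setminus K$ (checking the parametrization $K^\pm(b,c)$: the only finite accumulation points of $K^\pm$ outside itself arise as $(b,c)\to(0,0^+)$ and lie in $K$). Hence $K^+\cup K^-$ has codimension $4$ in $\mathbb{R}^6\setminus K$. The standard general-position argument gives that the inclusion $(\mathbb{R}^6\setminus K)\setminus(K^+\cup K^-)\hookrightarrow\mathbb{R}^6\setminus K$ is $3$-connected, hence preserves $\pi_1$ and $\pi_2$. Combining yields $\pi_1(\mathfrak{Z}_{23\mathcal{B}}^+)=0$ and $\pi_2(\mathfrak{Z}_{23\mathcal{B}}^+)=\mathbb{Z}$.

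Assertion (3) is then immediate: by Hurewicz, $H_1(\mathfrak{Z}_{23\mathcal{B}}^+;\mathbb{Z})=0$ and $H_2(\mathfrak{Z}_{23\mathcal{B}}^+;\mathbb{Z})\cong\pi_2(\mathfrak{Z}_{23\mathcal{B}}^+)=\mathbb{Z}$, so universal coefficients over $\mathbb{R}$ gives $H^1(\mathfrak{Z}_{23\mathcal{B}}^+;\mathbb{R})=0$ and $H^2(\mathfrak{Z}_{23\mathcal{B}}^+;\mathbb{R})=\mathbb{R}$; since $\mathfrak{Z}_{23\mathcal{B}}^+$ is a real-analytic manifold by Lemma~\ref{L5.2}(3), these coincide with the de Rham groups. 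The principal obstacle is Assertion (1): it requires the full classification of the flat locus outside $K$ via an exhaustive case analysis on the $\mathfrak{I}^+$-orbits, together with verifying that no flat solutions are missed. Once Assertion (1) is in place together with the dimension count that makes $K^\pm$ codimension $4$ in $\mathbb{R}^6\setminus K$, the topological statements follow formally.
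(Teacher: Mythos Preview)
Your proposal is correct and follows essentially the same approach as the paper: the same characterization of $\mathcal{Z}_{23\mathcal{B}}$ via the $\kappa=4$ locus $K$ and the flat locus, the same normalization argument (shear plus rescaling to $C_{22}{}^1=\pm1$) to reduce the flat case to $P^\pm$, the same long exact sequence of the contractible-fiber bundle from Lemma~\ref{L5.2}(3), and the same codimension/transversality argument to compute $\pi_1$ and $\pi_2$. Your observation that $K^\pm$ must be checked to be closed in $\mathbb{R}^6\setminus K$ is a useful technical point the paper leaves implicit.
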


\begin{proof} $C\in\mathcal{Z}_{23\mathcal{B}}$ if and only if
$(C_{12}{}^1,C_{22}{}^1,C_{22}{}^2)\ne(0,0,0)$ and $\rho(C)\ne0$.
Suppose $(C_{12}{}^1,C_{22}{}^1,C_{22}{}^2)\ne(0,0,0)$ but $\rho(C)=0$.
Because $0=\rho_{12}-\rho_{21}=(\Gamma_{12}{}^1+\Gamma_{22}{}^2)/x^1$, we may set
$\Gamma_{12}{}^1=x$ and $\Gamma_{22}{}^2=-x$.
First suppose $C_{22}{}^1\ne0$ or, equivalently, $\rho_1(\partial_{x^2},\partial_{x^2})\ne0$.
We consider the shear $T:(x^1,x^2)\rightarrow(x^1,\varepsilon x^1+x^2)$ which
sends $\partial_{x^1}$ to $\partial_{x^1}-\varepsilon \partial_{x^2}$ and $\partial_{x^2}$ to $\partial_{x^2}$.
By choosing $\varepsilon$ appropriately, we can ensure that $\rho_1(\partial_{x^2}, \partial_{x^1})=0$ or,
equivalently, that $\Gamma_{12}{}^1=0$ and set $x=0$. We rescale $x^2$ to set $C_{22}{}^1=\pm1$. 
Setting $\rho_{12}=0$ yields $C_{11}{}^2=0$. The remaining equations
become
$$C_{12}{}^2(1+C_{11}{}^1-C_{12}{}^2)=0\text{ and }(-1+C_{11}{}^1-C_{12}{}^2)=0\,.
$$
We conclude $C_{12}{}^2=0$ and $C_{11}{}^1=1$ so
$C=P^\pm$; this gives rise to the surfaces $K^\pm$.
Suppose next $C_{22}{}^1=0$. We have $\rho_{22}=-2x^2(x^1)^{-2}$ so $x=0$. This implies 
$(C_{12}{}^1,C_{22}{}^1,C_{22}{}^2)=(0,0,0)$ contrary
to our assumption. This establishes Assertion~1.

The principal bundle
$\mathfrak{I}^+\rightarrow\mathcal{Z}_{23\mathcal{B}}
\rightarrow\mathfrak{Z}_{23\mathcal{B}}^+$
gives rise to an associated long exact sequence of homotopy groups
\begin{equation}\label{E5.b}
\dots\rightarrow\pi_k(\mathfrak{I}^+)\rightarrow\pi_k(\mathcal{Z}_{23\mathcal{B}})
\rightarrow\pi_k(\mathfrak{Z}_{23\mathcal{B}}^+)\rightarrow\pi_{k-1}(\mathfrak{I}^+)\dots\,.
\end{equation}
Since $\mathfrak{I}^+$ is contractible, the $k^{\operatorname{th}}$ homotopy group
$\pi_k(\mathfrak{I}^+)=0$ for all $k$ and thus
$\pi_k(\mathcal{Z}_{23\mathcal{B}})$
is isomorphic to $\pi_k(\mathfrak{Z}_{23\mathcal{B}}^+)$ for any $k$. It therefore
suffices to prove $\mathcal{Z}_{23\mathcal{B}}$ is simply connected and
$\pi_2(\mathcal{Z}_{23\mathcal{B}})=\mathbb{Z}$.
A crucial point is that $\mathcal{Z}_{23\mathcal{B}}$ is obtained from $\mathbb{R}^6$
(which is contractible) by deleting the submanifold $K$ (which has codimension 3)
 and the two surfaces $K^\pm$ (which have codimension 4). 
 Let $P$ and $Q$ be two points in $\mathcal{Z}_{23\mathcal{B}}$. 
Find a smooth curve $\gamma$ in $\mathbb{R}^6$ from
$P$ to $Q$. Make $\gamma$ transverse to $K^\pm$ and to $K$. 
Then $\gamma$ does not intersect these submanifolds and
thus joins $P$ to $Q$ in $\mathcal{Z}_{23\mathcal{B}}$. 
Consequently $\mathcal{Z}_{23\mathcal{B}}$
is connected - i.e. $\pi_0( \mathcal{Z}_{23\mathcal{B}})=0$.
Next let $\gamma$ be a closed loop in $\mathcal{Z}_{23\mathcal{B}}$. Since $\mathbb{R}^6$
is contractible, we can construct a homotopy $H$ from $\gamma$ to the constant path in 
$\mathbb{R}^6$. Make this homotopy transverse to $K$ and to $K^\pm$. Again,
for dimensional reasons, the homotopy misses $K^\pm$ and $K$. Consequently, 
$\pi_1(\mathcal{Z}_{23\mathcal{B}})=0$.
Note that $\mathbb{R}^6-K$ is homotopy equivalent to $S^2$. 
Let $\gamma_0:S^2\rightarrow\mathbb{R}^6-K$
generate 
$$
\pi_2(\mathbb{R}^6-K)=\pi_2(S^2)=\mathbb{Z}\,.
$$
Make $\gamma_0$ transverse to $K^\pm$; again for
dimensional reasons, $\gamma_0$ misses the surfaces $K^\pm$ so we can regard 
$\gamma_0\in\pi_2(\mathcal{Z}_{23\mathcal{B}})$. Since $\gamma_0$ 
generates $\pi_2(\mathcal{R}^6-K)=\mathbb{Z}$, $\gamma_0$
generates a cyclic subgroup of $\pi_2(\mathcal{Z}_{23\mathcal{B}})$.
Let $\gamma:S^2\rightarrow\mathcal{Z}_{23\mathcal{B}}$. Construct a homotopy $H$
between $\gamma$ and $n\cdot\gamma_0$ in $\mathbb{R}^6-K$. 
Make the homotopy (which is 3-dimensional) transverse to $K^\pm$ 
(which are 2-dimensional). Thus $H$ misses $K^\pm$ and lives
in $\mathcal{Z}_{23\mathcal{B}}$. 
This shows $\pi_2(\mathcal{Z}_{23\mathcal{B}})=\mathbb{Z}$ and completes the proof
of Assertion~2.

By the Hurewicz Theorem,
$H_1(\mathfrak{Z}_{23\mathcal{B}})=0$ and 
$H_2(\mathfrak{Z}_{23\mathcal{B}})=\mathbb{Z}$.
The universal coefficient theorem shows dually
$H^1(\mathfrak{Z}_{23\mathcal{B}})=0$ and 
$H^2(\mathfrak{Z}_{23\mathcal{B}})=\mathbb{Z}$. We may use
 the de Rham theorem to identify topological cohomology with DeRham cohomology
 and verify Assertion~3.\end{proof}
 
 \begin{theorem}\label{T5.4}
Let $\mathfrak{Z}_{23\mathcal{B}}= \mathcal{Z}_{23\mathcal{B}}/\mathfrak{I}= \mathfrak{Z}_{23\mathcal{B}}^+/\{\mathfrak{I}/\mathfrak{I}^+\}$ be the moduli
space of isomorphism classes of unoriented surfaces $\mathcal{M}_C$ of Type~$\mathcal{B}$ with
$2\le\kappa(\mathcal{M}_C)\le3$. 
\begin{enumerate}
\item $\mathfrak{Z}_{23\mathcal{B}}$ is a 4-dimensional real analytic manifold.
\item The projection $\pi$
from $\mathcal{Z}_{23\mathcal{B}}$
to $\mathfrak{Z}_{23\mathcal{B}}$ is
real analytic.
\item The projection $\sigma$ from $\mathfrak{Z}_{23\mathcal{B}}^+$ to
$\mathfrak{Z}_{23\mathcal{B}}$ is a real analytic 2-sheeted branched cover which
is ramified over a real analytic manifold of dimension $2$ which is isomorphic to
$\mathbb{R}^2-0\cup\mathbb{R}^2-0$.
\item $\mathfrak{Z}_{23\mathcal{B}}$ is simply connected and 
$H^2_{\operatorname{DeR}}(\mathfrak{Z}_{23\mathcal{B}})=\mathbb{R}$.
\end{enumerate}
 \end{theorem}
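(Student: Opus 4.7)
The plan is to present $\mathfrak{Z}_{23\mathcal{B}}$ as the quotient of $\mathfrak{Z}_{23\mathcal{B}}^+$ by the residual $\mathbb{Z}_2$-action $\iota$ induced by $\mathfrak{I}/\mathfrak{I}^+$, represented by the pull-back of $T_{0,-1}\colon(x^1,x^2)\mapsto(x^1,-x^2)$. Using the explicit formulas for the $\mathfrak{I}$-action stated in Section~\ref{S5}, $T_{0,-1}^*$ acts as multiplication by $-1$ on the three components $C_{11}{}^2,\,C_{12}{}^1,\,C_{22}{}^2$ and fixes the remaining three. Hence by Lemma~\ref{L5.2}(2) the fixed locus of $\iota$ in $\mathfrak{Z}_{23\mathcal{B}}^+$ is exactly the image of the amphichiral set
\[
A:=\{C\in\mathcal{Z}_{23\mathcal{B}}:C_{11}{}^2=C_{12}{}^1=C_{22}{}^2=0\}.
\]
A direct inspection of the $\mathfrak{I}^+$-action formulas shows that the subgroup preserving $A$ is $\{T_{0,c}:c>0\}$, acting by $C_{22}{}^1\mapsto c^2C_{22}{}^1$ and trivially on $(C_{11}{}^1,C_{12}{}^2)$. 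Combining this with the classification of the flat locus in Lemma~\ref{L5.3}(1), which intersects $A$ along the line $(C_{11}{}^1,C_{12}{}^2,C_{22}{}^1)=(1,0,t)$ with $t\ne 0$, we conclude that the branch locus in $\mathfrak{Z}_{23\mathcal{B}}^+$ is the disjoint union of two copies of $\mathbb{R}^2\setminus\{(1,0)\}\cong\mathbb{R}^2-0$, indexed by the sign of $C_{22}{}^1$, as asserted in~(3).

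To give $\mathfrak{Z}_{23\mathcal{B}}$ its real analytic structure and establish Assertions~(1) and~(2), I would invoke the local slice theorem for the proper real analytic $\mathbb{Z}_2$-action on the real analytic manifold $\mathfrak{Z}_{23\mathcal{B}}^+$ furnished by Lemma~\ref{L5.2}(3). Off the branch locus $\iota$ acts freely, so the quotient is a real analytic double cover in the usual sense. Near a point of the branch locus I would choose equivariant real analytic coordinates in which $\iota$ takes the normal form $(\bar z,v,w)\mapsto(\bar z,-v,-w)$, with $\bar z$ parametrizing the branch locus; the quotient chart is then furnished by the polynomial squaring map $(v,w)\mapsto(v^2-w^2,2vw)$, yielding real analytic coordinates on $\mathfrak{Z}_{23\mathcal{B}}$ in which $\sigma$ is polynomial, and hence real analytic. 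Real analyticity of $\pi=\sigma\circ\pi^+$ then follows from Lemma~\ref{L5.2}(3). The main technical point is producing the $\iota$-equivariant normal form uniformly along the branch locus; everything else is bookkeeping with the explicit $\mathfrak{I}$-action formulas.

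For Assertion~(4), simple connectedness of $\mathfrak{Z}_{23\mathcal{B}}$ follows by a standard argument: given a loop $\gamma$ in $\mathfrak{Z}_{23\mathcal{B}}$, we lift it to a path $\tilde\gamma$ in $\mathfrak{Z}_{23\mathcal{B}}^+$; if the endpoints differ, we join $\tilde\gamma(0)$ to a branch point $p$ by a path $\delta$, and observe that $\tilde\gamma\cdot\iota(\delta)\cdot\delta^{-1}$ is a loop in $\mathfrak{Z}_{23\mathcal{B}}^+$ that is null-homotopic by Lemma~\ref{L5.3}, whose projection to $\mathfrak{Z}_{23\mathcal{B}}$ is homotopic to $\gamma$. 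For the second Betti number, we use the standard identification $H^*_{\operatorname{DeR}}(\mathfrak{Z}_{23\mathcal{B}})\cong H^*_{\operatorname{DeR}}(\mathfrak{Z}_{23\mathcal{B}}^+)^{\mathbb{Z}_2}$ coming from the averaging operator for the real analytic $\mathbb{Z}_2$-action. By the proof of Lemma~\ref{L5.3}, the generator of $H^2=\mathbb{R}$ upstairs is represented by the embedded $S^2$ in the three-dimensional coordinate plane spanned by $(C_{12}{}^1,C_{22}{}^1,C_{22}{}^2)$, and on this $S^2$ the involution $\iota$ acts as rotation by $\pi$ about the $C_{22}{}^1$-axis. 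This action is orientation preserving on $S^2$, so it acts trivially on $H^2(S^2)$ and hence on $H^2(\mathfrak{Z}_{23\mathcal{B}}^+)$, yielding $H^2_{\operatorname{DeR}}(\mathfrak{Z}_{23\mathcal{B}})=\mathbb{R}$.
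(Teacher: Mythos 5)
Your proposal is correct and follows essentially the same route as the paper: the same generator $T_{0,-1}$ of $\mathfrak{I}/\mathfrak{I}^+$, the same identification of the branch locus with the amphichiral classes of Lemma~\ref{L5.2}(2) (two copies of $\mathbb{R}^2-0$ after normalizing $C_{22}{}^1=\pm1$ and deleting the flat point), the same complex-squaring chart $(v,w)\mapsto(v^2-w^2,2vw)$ along the fixed locus, the same base-point-in-the-branch-locus lifting argument for simple connectivity, and the same key observation for $H^2$ that the involution acts as $\operatorname{diag}(-1,1,-1)$ on the $(C_{12}{}^1,C_{22}{}^1,C_{22}{}^2)$-sphere generating second homology, hence orientation-preservingly and trivially on $H^2$ (your cohomological averaging is the dual of the paper's homological transfer $\sigma_*\sigma^*=2$, $\sigma^*\sigma_*=\operatorname{id}+[T]_*$). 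The one step you defer --- a uniform $\iota$-equivariant normal form along the branch locus --- is already supplied by the explicit charts $\mathcal{O}_1^\pm$ of Lemma~\ref{L5.2}, on which $T$ acts linearly by $(z^1,z^2,z^3,z^4)\mapsto(z^1,-z^2,-z^3,z^4)$, so no separate slice theorem is required.
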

 
 \begin{proof}
 Let $T(x^1,x^2)=(x^1,-x^2)$. 
 Then $[T]$ generates $\mathfrak{I}/\mathfrak{I}^+=\mathbb{Z}_2$; we could
 have taken $T_{b,-1}$ for any $b\in\mathbb{R}$, but this is a particularly
 felicitous choice. Let $\mathfrak{F}$ be the fixed point set of $T$ in 
 $\mathfrak{Z}_{23\mathcal{B}}^+$. 
 We examine the action of $T$ on the coordinate systems discussed previously
 to prove Assertion~1.

 \subsubsection*{\bf Case 5.2.1. The coordinate system $\mathcal{O}_0^\pm$}
 We have
\begin{eqnarray*}
&&\begin{array}{lll}
 (C^\pm)_{11}{}^1=z_0^{1,\pm},&
 (C^\pm)_{11}{}^2=z_0^{2,\pm},&
 (C^\pm)_{12}{}^1=z_0^{3,\pm},\\[0.05in]
 (C^\pm)_{12}{}^2=-z_0^{1,\pm},&
 (C^\pm)_{22}{}^1=z_0^{4,\pm},&
 (C^\pm)_{22}{}^2=\pm1-z_0^{3,\pm},
\end{array}\\
&&\ \ T(C^\pm(z_0^{1,\pm},z_0^{2,\pm},z_0^{3,\pm},z_0^{4,\pm}))
=C^{\mp}(z_0^{1,\mp},-z_0^{2,\mp},-z_0^{3,\mp},z_0^{4,\mp})\,.
\end{eqnarray*}
Thus, since $T^\ast\rho_0(\partial_{x^2})=-\rho_0(\partial_{x^2})$
 and $\rho_0$ is non-trivial, $[T]$ acts without fixed points and preserves
$\mathcal{O}_0^+\cup\mathcal{O}_0^-$; it is a linear
interchange of $\mathcal{O}_0^+$ and $\mathcal{O}_0^-$. 
Thus the quotient gives a coordinate
chart $\mathcal{O}_0$ on $\mathfrak{Z}_{23\mathcal{B}}$ in such a way
that  $\sigma$ is an unramified 
 real analytic double covering projection.
 \subsubsection*{\bf Case 5.2.2. The coordinate system $\mathcal{O}_3^\pm$}
We have 
\begin{eqnarray*}
&& (C^\pm)_{11}{}^1=- z_3^{2,\pm}\mp\{ z_3^{1,\pm} z_3^{3,\pm}+ z_3^{2,\pm} z_3^{4,\pm}\},\\
&&(C^\pm)_{12}{}^1=\pm1,\\
&& (C^\pm)_{11}{}^2=z_3^{1,\pm},\quad  (C^\pm)_{12}{}^2=z_3^{2,\pm},
\quad  (C^\pm)_{22}{}^1=z_3^{3,\pm},\quad  (C^\pm)_{22}{}^2=z_3^{4,\pm},\\
&&T(C^\pm(z_3^{1,\pm},z_3^{2,\pm},z_3^{3,\pm},z_3^{4,\pm}))=
(C^\mp)(-z_3^{1,\mp},z_3^{2,\mp},z_3^{3,\mp},-z_3^{4,\mp})\,.
\end{eqnarray*}
Thus $[T]$ acts without fixed points and preserves
$\mathcal{O}_3^+\cup\mathcal{O}_3^-$; it is a linear
interchange of $\mathcal{O}_3^+$ and $\mathcal{O}_3^-$ since $T^\ast (C_{12}{}^1)=-(C_{12}{}^1)$.
Thus the quotient gives a coordinate
chart $\mathcal{O}_3$ on $\mathfrak{Z}_{23\mathcal{B}}$ in such a way
that  $\sigma$ is an unramified 
 real analytic double covering projection.
\subsubsection*{\bf Case 5.2.3. The coordinate system $\mathcal{O}_1^\pm$}
Here the analysis is less trivial. We have
\begin{eqnarray*}
&&\begin{array}{lll}
 (C_\pm)_{11}{}^1=z_1^{1,\pm},&
 (C_\pm)_{11}{}^2=z_1^{2,\pm},&
 (C_\pm)_{12}{}^1=z_1^{3,\pm},\\[0.05in]
 (C_\pm)_{12}{}^2=z_1^4,&
 (C_\pm)_{22}{}^1=\pm1,&
 (C_\pm)_{22}{}^2=z_1^{3,\pm}.
\end{array}\\
&&\ \ T( C(z_1^{1,\pm},z_1^{2,\pm},z_1^{3,\pm},z_1^{4,\pm}))
=C^\pm(z_1^{1,\pm},-z_1^{2,\pm},-z_1^{3,\pm},z_1^{4,\pm})\,.
\end{eqnarray*}
Note that the exceptional points $P^\pm\in\mathfrak{F}^\pm$ and that $K$ does not meet
$\mathcal{O}_1^\pm$. In contrast to the previous two cases, $T$ preserves 
$\mathcal{O}_1^\pm$ separately 
and has non-trivial fixed point sets
$\mathfrak{F}^\pm=\{\vec z_{1,\pm}:z_1^{2,\pm}=z_1^{3,\pm}=0\}$.
We introduce complex coordinates setting
$$
w_1^{1,\pm}:=z_1^{1,\pm}+\sqrt{-1}z_1^{4,\pm}\text{ and }
w_1^{2,\pm}:=z_2^{2,\pm}+\sqrt{-1}z_1^{3,\pm}\,.
$$
Consider the map $\Phi^\pm(w_1^{1,\pm},w_1^{2,\pm}):
=(w_1^{1,\pm},(w_1^{2,\pm})^2)$. We have $\mathfrak{F}^\pm=\mathbb{C}\times\{0\}$ 
and $\Phi^\pm(w_1^{1,\pm},w_1^{2,\pm})=\Phi^\pm(\tilde w_1^{1,\pm},\tilde w_1^{2,\pm})$
 if and only if 
$\sigma(w_1^{1,\pm},w_1^{2,\pm})=\sigma(\tilde w_1^{1,\pm},\tilde w_1^{2,\pm})$.
Thus $\Phi$ extends to a 1-1
map from $\mathbb{C}^2/\mathbb{Z}_2$ to $\mathbb{C}^2$ which gives coordinates
on $\mathfrak{Z}_{23\mathcal{B}}$. This gives $\mathfrak{Z}_{23\mathcal{B}}$ the
structure of a real-analytic 4-dimensional manifold in such a way that $\sigma$ is a
real analytic double covering which is ramified along
$\mathfrak{F}^\pm=\mathbb{C}\times\{0\}$. Assertion~1 now follows.

Since $\sigma$ is surjective and $\mathfrak{Z}_{23\mathcal{B}}^+$ is connected, we
conclude $\mathfrak{Z}_{23\mathcal{B}}$ is connected. 
We give $\mathfrak{Z}_{23\mathcal{B}}$ a simplicial structure so that $[T]$ is a simplicial
map and so that $[T]$ is $1-1$ on each simplex.
This implies that  every simplicial curve $\alpha$ in $\mathfrak{Z}_{23\mathcal{B}}$ 
can be lifted to a curve
in $\tilde\alpha$ in $\mathfrak{Z}_{23\mathcal{B}}^+$. The lift will in general not
be unique even if the initial vertex is fixed
 if one of the subsequent vertices is in $\mathfrak{F}^\pm$. Choose the base point
 to be in the ramifying set. Let $\alpha$ be a closed curve in $\mathfrak{Z}_{23\mathcal{B}}$
 starting and ending in the ramifying set. 
 Lift $\alpha$ to a (possibly) not closed curve in $\mathfrak{Z}_{23\mathcal{B}}^+$. 
 Since $\sigma$ is 1-1
 on $\mathfrak{F}^{-1}$, $\tilde\alpha$ is in fact a closed curve in 
 $\mathfrak{Z}_{23\mathcal{B}}^+$. Since $\mathfrak{Z}_{23\mathcal{B}}^+$ 
 is simply connected, $\tilde\alpha$
is homotopic to the constant curve at the base point. Applying $\sigma$ then gives
a homotopy of $\alpha$ in  $\mathfrak{Z}_{23\mathcal{B}}$ to the constant
path and shows $\mathfrak{Z}_{23\mathcal{B}}$ is simply connected. This proves
Assertion~2.

Let $g$ be the generator of $\pi_2(\mathcal{Z}_{23\mathcal{B}})=\mathbb{Z}$.
Let $i$ be the inclusion of $\mathcal{Z}_{23\mathcal{B}}$ into
$\mathbb{R}^6-K=\mathbb{R}^3\times(\mathbb{R}^3-0)$ and let
$\pi$ be the projection of $\mathbb{R}^3\times(\mathbb{R}^3-0)$ onto
$\mathbb{R}^3-0$. We have a commutative diagram defined by $\pi_*\circ i_*$ in homotopy
$$\begin{array}{lllll}
\mathbb{Z}=\pi_2(\mathcal{Z}_{23\mathcal{B}})&\mapright{\approx}&
\mathbb{Z}=\pi_2(\mathbb{R}^3-0)\\
\ \ T_*\downarrow&\ \ \circ&\ \ T_*\downarrow\\
\mathbb{Z}=\pi_2(\mathcal{Z}_{23\mathcal{B}})&\mapright{\approx}&
\mathbb{Z}=\pi_2(\mathbb{R}^3-0)
\end{array}$$
We have $T(C_{12}{}^1,C_{22}{}^1,C_{22}{}^2)=(-C_{12}{}^1,C_{22}{}^1,-C_{22}{}^2)$.
Since $T$ is the composition of 2 reflections of $\mathbb{R}^3-0$, $T$ is homotopic
to the identity and thus $T_*$ is the identity on $\pi_2(\mathbb{R}^3-0)$. Consequently
$T_*$ is the identity on $\pi_2(\mathcal{Z}_{23\mathcal{B}})$. The long exact sequence
of the fibration given in Equation~(\ref{E5.b}) gives rise to a commutative diagram
$$\begin{array}{lllll}
\mathbb{Z}=\pi_2(\mathcal{Z}_{23\mathcal{B}})&\mapright{\approx}&
\mathbb{Z}=\pi_2(\mathfrak{Z}_{23\mathcal{B}})^+\\
\ \ T_*\downarrow&\ \ \circ&\ \ [T]_*\downarrow\\
\mathbb{Z}=\pi_2(\mathcal{Z}_{23\mathcal{B}})&\mapright{\approx}&
\mathbb{Z}=\pi_2(\mathfrak{Z}_{23\mathcal{B}})^+.
\end{array}$$
The Hurewicz isomorphism then shows
$[T]_*=\operatorname{id}\text{ on }H_2(\mathfrak{Z}_{23\mathcal{B}}^+)$.

Let $\Delta$ be a simplex in $\mathfrak{Z}_{23\mathcal{B}}$. There are two possible
lifts of $\Delta$ to $\mathfrak{Z}_{23\mathcal{B}}^+$; if $\Delta$ lies totally in $\mathfrak{F}$,
we take 2 copies of the canonical lift. We define $\sigma^*(\Delta)$ to be the formal sum of
these two lifts. This defines a chain map which induces a map in
homology denoted by $\sigma^*$ which satisfies
$$
\sigma_*\sigma^*=2\text{ and }\sigma^*\sigma_*=\operatorname{id}+[T]_*\,.
$$
Since $[T]_*$ is the identity, we also obtain $\sigma^*\sigma_*$ is multiplication by 2.
Consequently $\sigma_*$ is an isomorphism from 
$H_2(\mathfrak{Z}_{23\mathcal{B}}^+,\mathbb{R})$ to 
$H_2(\mathfrak{Z}_{23\mathcal{B}},\mathbb{R})$. Assertion~3
now follows via duality.\end{proof}

\begin{remark}\rm
If $\sigma$ was a covering projection, the proof of Assertion~3 that
we have given is just the usual application of transfer and induction in homology; this
is, for example, how the homology of real projective space with coefficients
in $\mathbb{R}$ is computed and exploits
the fact that we are dealing with a Mackey functor. There is
an additional subtlety here since the covering projection is in fact ramified that
needed to be dealt with. Note that we obtain no information concerning any
2-torsion in $H_2(\mathfrak{Z}_{23\mathcal{B}};\mathbb{Z})$ or, equivalently,
in $\pi_2(\mathfrak{Z}_{23\mathcal{B}};\mathbb{Z})$.
\end{remark}


\begin{thebibliography}{lll}
\bibitem{AMK08} T. Arias-Marco and O. Kowalski,
``Classification of locally homogeneous affine connections with arbitrary torsion on 2-manifolds",
{\it Monatsh. Math. \bf 153} (2008), 1--18.

\bibitem{BGGP16} M. Brozos-V\'{a}zquez, E. Garc\'{i}a-R\'{i}o, and P. Gilkey,
``Homogeneous affine surfaces: Killing vector fields and Gradient Ricci solitons", 
arXiv:1512.05515.

\bibitem{CGV10} 
E. Calvi\~{n}o-Louzao, E. Garc\'{i}a-R\'{i}o, and R. V\'{a}zquez-Lorenzo, 
``Riemann Extensions of Torsion-Free Connections with Degenerate Ricci Tensor",
{\it Canad. J. Math. \bf 62} (2010), 1037--1057.

\bibitem{CGGV09}
E. Calvi\~no-Louzao, E. Garc\'{i}a-R\'{i}o, P. Gilkey, and R. V\'azquez-Lorenzo, 
``The geometry of modified Riemannian extensions", 
\emph{Proc. R. Soc. Lond. Ser. A Math. Phys. Eng. Sci.} \textbf{465} (2009), 2023--2040.

\bibitem{De}
A. Derdzinski, 
``Noncompactness and maximum mobility of type III Ricci-flat self-dual neutral Walker four-manifolds", 
\emph{Q. J. Math.} \textbf{62} (2011), 363--395.

\bibitem{Du}
S. Dumitrescu, 
``Locally homogeneous rigid geometric structures on surfaces" 
\emph{Geom. Dedicata} \textbf{160} (2012), 71--90.

\bibitem{DG}
S. Dumitrescu and A. Guillot, 
``Quasihomogeneous analytic affine connections on surfaces",
\emph{J. Topol. Anal.} \textbf{5} (2013), 491--532.

\bibitem{G-SG}
A. Guillot and A. S\'anchez-Godinez,
``A classification of locally homogeneous affine connections on compact surfaces",
\emph{Ann. Global Anal. Geom.} \textbf{46} (2014), 335--349.

 \bibitem{KVOp2}
O. Kowalski, B. Opozda, and Z. Vlasek, 
``A classification of locally homogeneous affine connections with skew-symmetric Ricci tensor on $2$-dimensional manifolds'', 
\emph{Monatsh. Math.} \textbf{130} (2000), 109--125.

\bibitem{KVOp}
O. Kowalski, B. Opozda, and Z. Vlasek,
``On locally nonhomogeneous pseudo-Riemannian manifolds with locally homogeneous Levi-Civita connections''. 
\emph{Internat. J. Math.} \textbf{14} (2003), 559--572.

\bibitem{KV03} O. Kowalski and Z. Vlasek, 
``On the local moduli space of locally
homogeneous affine connections in plane domains",
\emph{Comment. Math. Univ. Carolinae} \textbf{44} (2003), 229--234.

\bibitem{KoSe}
O. Kowalski and M. Sekizawa, 
``The Riemann extensions with cyclic parallel Ricci tensor". 
\emph{Math. Nachr.} \textbf{287} (2014), 955--961.

\bibitem{Op15}
B. Opozda, 
``Bochner's technique for statistical structures",
\emph{Ann. Global Anal. Geom.} \textbf{48} (2015), 357--395.

\bibitem{Op04} B. Opozda, 
``A classification of locally homogeneous connections on 2-dimensional manifolds",
\emph{Differential Geom. Appl.} \textbf{21} (2004), 173--198.

\bibitem{Opozda}
B. Opozda,
``Locally homogeneous affine connections on compact surfaces",
\emph{Proc. Amer. Math. Soc.} \textbf{132} (2004), 2713--2721.
\end{thebibliography}
\end{document}